\def\arxivVersion{}
\def\mytitle{Feedback Refinement Relations\\for the Synthesis of Symbolic Controllers}
\def\myname{
\ifx\shortnames\undefined Gunther \fi
Reissig%
,
\ifx\shortnames\undefined Alexander \fi
Weber%
, and
\ifx\shortnames\undefined Matthias \fi
Rungger
}
\def\mykeywords{Discrete abstraction, symbolic model, nonlinear
system, symbolic control, automated synthesis,
robust synthesis%
\ifx\arxivVersion\undefined\relax\else;
MSC: Primary, 93B51;
Secondary, 93B52, 93C10, 93C30, 93C55, 93C57, 93C65%
\fi}
\let\submissionNote=\relax
\let\websourceNote=\relax
\def\DraftVersion{}
\def\submissionNote{This work has been accepted for publication in the
\emph{IEEE Trans. Automat. Control}. Please refer to
\href{http://dx.doi.org/10.1109/TAC.2016.2593947}{DOI: 10.1109/TAC.2016.2593947}
for the definite publication.
}
\def\websourceNote{To reference this work, please find a {Bib\TeX}
  entry at
\href{http://www.reiszig.de/gunther/pubs/i14abs.html}{author's homepage}.
}
\date{}
\tikzset{ block/.style    = {draw, thick, rectangle, minimum height = .8cm, minimum width = .8cm}, }
\tikzset{ my loop/.style={to path={ .. controls +(90:1) and +(130:1) .. (\tikztotarget) \tikztonodes}}}
\newcommand{\cox}{\tikz{\draw (0,0) circle (.7ex); \draw[rotate=45] (-0.45ex,-.45ex) rectangle (.45ex,.45ex); }}
\def\emph#1{\textit{#1}}
\renewcommand\sout{\bgroup\markoverwith
{\textcolor{red}{\rule[.5ex]{2pt}{1pt}}}\ULon}
\let\ORGforeignlanguage\foreignlanguage
\def\foreignlanguage#1{\lowercase{\ORGforeignlanguage{#1}}}
\def\MakeUppercase#1{#1}%
\def\markboth#1#2{\def\leftmark{\@IEEEcompsoconly{\sffamily}\MakeUppercase{#1}}%
\def\rightmark{\@IEEEcompsoconly{\sffamily}\MakeUppercase{#2}}}
\def\href#1#2{\textbf{#2}}
\theoremstyle{hyp*}
\def\p@condSR#1{(SR)}\makeatother
\newtheorem{condASR}{ASR}
\def\p@condASR#1{(ASR)}\makeatother
\newtheorem{condC}{C}
\def\p@condC#1{(C)}\makeatother
\newtheorem{condZ}{Z}
\def\p@condZ#1{(Z)}\makeatother
\newcites{review}{References}
\begin{document}\bstctlcite{IEEEtranBSTCTL_AuthorNoDash}%
\makeatletter
\renewenvironment{proof}[1][\proofname]{\par
  \pushQED{\qed}%
  \normalfont \topsep6\p@\@plus6\p@\relax
  \trivlist
  \item[\hskip\labelsep
        \itshape
    #1\@addpunct{.}]\ignorespaces
}{%
  \popQED\endtrivlist\@endpefalse
}
\markboth{\def\shortnames{}\myname\hspace*{\fill}\def\\{ }\mytitle\hspace*{\fill}\@date\hspace*{\fill}\ifx\DraftVersion\undefined\relax\else\hspace*{\fill}svn: \svnrev\fi\hspace*{\fill}}%
{\def\shortnames{}\myname\hspace*{\fill}\def\\{ }\mytitle\hspace*{\fill}\@date\hspace*{\fill}\ifx\DraftVersion\undefined\relax\else\hspace*{\fill}svn: \svnrev\fi\hspace*{\fill}}%
\makeatother

\title{\mytitle}

\author{\myname%
\thanks{%
\ifx\DraftVersion\undefined\relax\else%
Corresponding author: G. Reissig.\newline
\fi
G. Reissig and A. Weber are with the
University of the Federal Armed Forces Munich,
Dept. Aerospace Eng.,
Chair of Control Eng. (LRT-15),
D-85577 Neubiberg (Munich),
Germany,
\ifx\DraftVersion\undefined%
\url{http://www.reiszig.de/gunther/}%
\else%
\url{gunther@reiszig.de}, \url{A.Weber@unibw.de}%
\fi%
}%
\thanks{%
M. Rungger is with the Hybrid Control Systems Group at the Department of
Electrical and Computer Engineering at the Technical University of Munich, 80333 Munich, Germany%
\ifx\DraftVersion\undefined.\else%
, \url{matthias.rungger@tum.de}%
\fi
}%
\thanks{This work has been supported by the German Research Foundation (DFG) under grants no. RE 1249/3-2 and RE 1249/4-1.%
\ifx\arxivVersion\undefined\relax\else%
{} \submissionNote{} \websourceNote%
\fi%
}
}

\maketitle

\begin{abstract}
\noindent
We present an abstraction and refinement methodology
for the automated controller synthesis to enforce
general predefined specifications. The
designed controllers require
quantized (or symbolic) state information only and can be interfaced
with the system via a static quantizer.
Both features are
particularly important with regard to any practical implementation of
the designed controllers
and, as we prove, are characterized by the existence
of a feedback refinement relation between plant and abstraction.
Feedback refinement relations are a novel concept
introduced in this paper.
Our work builds on a general notion of system with set-valued dynamics
and possibly non-deterministic quantizers to permit the synthesis
of controllers that robustly, and provably, enforce the
specification in the presence of various types of uncertainties and
disturbances.
We identify a class of abstractions that is
canonical in a well-defined sense, and
provide a method to efficiently compute canonical abstractions.
We demonstrate the practicality
of our approach on two examples.
\end{abstract}

\ifx\arxivVersion\undefined\else%
\begin{IEEEkeywords}
\noindent
\mykeywords
\end{IEEEkeywords}
\fi

\section{Introduction}
\label{s:intro}
A common approach to engineer reliable, robust, high-integrity
hardware and software systems that are deployable in safety-critical
environments, is the application of formal verification techniques to
ensure the correct, error-free implementation of some given formal
specifications.
Typically, the verification phase is executed as a
distinct step after the  design phase, e.g. \cite{Palnitkar03}.
In case that the
system fails to satisfy the specification, it is the engineer's
burden to identify the fault, adjust the system accordingly and return to the
verification phase. A more appealing approach, especially in the
context of intricate, complex dynamical systems,
is to merge the design and verification phase and utilize
automated correct-by-construction formal synthesis procedures,
e.g. \cite{Tabuada09}.
In our treatment of controller design problems %
we follow the latter approach.
That is, given a mathematical system
description and a formal specification which expresses the desired
system behavior, we seek to synthesize a
controller that provably enforces the specification on the system.
Subsequently, we often refer to the
system that is to be controlled as the \begriff{plant}.

For finite systems, which are described by transition systems
with finite state, input and output alphabets, there exist a number of
automata-theoretic schemes,
known under the label of \begriff{reactive synthesis},
to algorithmically synthesize
controllers that enforce complex specifications, possibly formulated
in some temporal logic, see
e.g.~\cite{EmersonClarke82,PnueliRosner89,Vardi95,Tabuada09,BloemJobstmanPitermanPnueliSaar12}.

Those methods have been extended to infinite systems
within an abstraction and refinement
framework,
e.g.~\cite{Tabuada09,i11abs,GrueneJunge07,KreisselmeierBirkholzer94,Girard13,%
YordanovTumovaCernaBarnatBelta12,DallalColomboDelVecchioLafortune13,LiuOzay14,%
ZamaniPolaMazoTabuada10,RunggerStursberg12,%
KoutsoukosAntsaklisStiverLemmon00,%
i13absocc,GirardPappas07b,MoorSchmidtWittmann11,Girard14c},
which roughly proceeds in three steps.
In the first step, the concrete infinite system (together with the
specification) is lifted to an abstract
domain where it is substituted by a finite system, which is often referred to
as \begriff{abstraction} or \begriff{symbolic model}. In the second step,
an auxiliary problem on the abstract domain (``abstract problem'')
is solved using one of the previously mentioned methods
for finite systems. In the third step, the
controller that has been synthesized for the abstraction
is refined to the concrete system.

The correctness of this controller design concept is usually ensured
by relating the concrete system with its abstraction in terms of a
system relation.
The most common approaches are based on
\begriff{(alternating) (bi-)simulation relations}
and approximate variants thereof
\cite{Tabuada09}.
In this work, we address two shortcomings of the abstraction and
refinement process based on simulation relations and related concepts. 
The first shortcoming, which we refer to as the \begriff{state
  information issue}, results from the fact that the
refined controller
requires the exact state information of the concrete system.
However, usually, the exact state is not known and only quantized (or symbolic) state
information is available, which constitutes a major obstacle to the
practical implementation of the synthesized controllers.
The second issue refers to the huge amount of dynamics added
to the abstract controller in the course of its refinement, so that,
effectively, the refined controller contains the abstraction as a
building block.
Given the fact that an abstraction may very well comprise
millions of states and billions of
transitions~\cite{i11abs,ZamaniPolaMazoTabuada10}, an implementation
of the refined controller is often too expensive to be practical.
We refer to this problem as the \begriff{refinement
complexity issue}. We
illustrate both issues by
examples in Section~\ref{s:pitfalls}. See also \cite{i14symc}.

In this paper, we propose a novel notion of system relation, termed
\begriff{feedback refinement relation}, to resolve both issues.
If the concrete system is related with the abstraction via a feedback
refinement relation, then, as we shall show, the abstract controller
can be connected to the plant via a static quantizer only,
irrespective of the particular specification we seek to enforce on the
plant. See \ref{f:ClosedLoop_FRR}.
\begin{figure}[ht]
\begin{center}
\psfrag{input}[r][r]{input}
\psfrag{converter}[][]{converter}
\psfrag{plant}[][]{plant}
\psfrag{quantizer}[][]{quantizer}
\psfrag{refined controller}[][]{refined controller}
\psfrag{controller}[][]{controller}
\psfrag{interface}[][]{interface}
\psfrag{state}[l][l]{state}
\psfrag{abstraction}[][]{abstraction}
\psfrag{abstract}[][]{abstract}
\psfrag{refined}[][]{refined}
\includegraphics[width=0.6\linewidth]{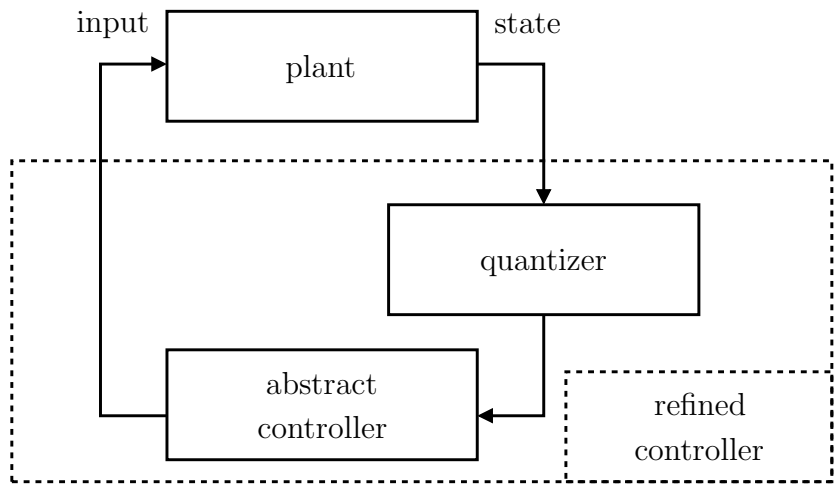}%
\end{center}
\caption{
Closed loop resulting from the abstraction and refinement approach based
on feedback refinement relations, proposed in this paper.
}\label{f:ClosedLoop_FRR}
\end{figure}
Moreover, the existence of a feedback refinement relation between plant and
abstraction is not only sufficient to ensure the simple structure of
the closed loop in \ref{f:ClosedLoop_FRR}, but in fact also
necessary.

Our work builds on a general notion of system with set-valued
dynamics and possibly non-deterministic quantizers. This is
particularly useful to model various types of disturbances, including
plant uncertainties, input disturbances and state measurement errors.
We demonstrate how to account for those perturbations in our framework so that the
synthesized controllers robustly enforce the specification.

In general, abstractions over-approximate the plant behavior, and so
their practical use will depend on the accuracy of the approximation
that can be achieved
by actual computational methods;
see the discussion in \cite[Sect.~I]{i11abs}.
In this regard, we show
that the set membership relation together with
an abstraction whose state alphabet is a cover of the concrete state
alphabet is canonical in a well-defined sense, and provide a method to
compute canonical abstractions of perturbed nonlinear
sampled systems.
The practicality of the approach is demonstrated on
two examples -- a path planning problem for an autonomous vehicle and
an aircraft landing maneuver.

{\bf Related Work.}
\label{pageref:review:item3}
Feedback refinement relations are based on the common principle of 
``accepting more inputs and generating fewer outputs'' that is often
encountered in component-based design methodologies, e.g.~contract-based
design~\cite{BenvenutiFerrariMazziSangiovanniVincentelli08} and interface
theories~\cite{TripakisLicklyHenzingerLee11}. Those theories are
usually developed in a purely behavioral setting, see e.g.~\cite{BenvenutiFerrariMazziSangiovanniVincentelli08,TripakisLicklyHenzingerLee11,
MoorSchmidtWittmann11}, and are therefore not immediately applicable
in our framework which is based on stateful systems.
This class of systems contains a great variety of system
descriptions, including common models like transition systems~\cite{BaierKatoen08,Tabuada09} as well as
discrete-time control systems~\cite{Sontag98}.

There exist a number of abstraction-based controller synthesis
methods, based on stateful systems,
that do not suffer from the state information issue nor from the
refinement complexity
issue~\cite{KreisselmeierBirkholzer94,GrueneJunge07,i11abs,YordanovTumovaCernaBarnatBelta12,Girard13,DallalColomboDelVecchioLafortune13,LiuOzay14}.
However, none of those approaches offers necessary and sufficient
conditions for the controller refinement procedure to be free of the
mentioned issues.  In addition, the majority of these works are
tailored to certain types of specifications or systems. Specifically,
simple safety and reachability problems are considered
in~\cite{DallalColomboDelVecchioLafortune13,Girard13} and
\cite{KreisselmeierBirkholzer94,GrueneJunge07,i11abs,Girard13},
respectively,
while
\cite{YordanovTumovaCernaBarnatBelta12,Girard13,DallalColomboDelVecchioLafortune13}
is limited to piecewise affine, incrementally stable, and simple
integrator dynamics, respectively. Moreover,
plants are assumed to be non-blocking in
\cite{KreisselmeierBirkholzer94,GrueneJunge07,i11abs,YordanovTumovaCernaBarnatBelta12,Girard13,DallalColomboDelVecchioLafortune13,LiuOzay14}.
In contrast, our framework covers stateful systems
with general, set-valued dynamics, including transitions systems and
discrete-time control systems as special cases. We allow systems to
be blocking,
and
any linear time property can serve as a
specification.

A class of methods known under the label of
\begriff{hierarchical control} 
are similar in spirit to abstraction-based methods in that they
synthesize discrete controllers using finite-state models derived from
concrete control problems, e.g.
\cite{CainesWei98,FainekosKressGazitPappas05,HabetsCollinsvanSchuppen06}.
However,
\label{review:item3:text}\label{review:item2:text}
the finite-state models in
\cite{FainekosKressGazitPappas05,HabetsCollinsvanSchuppen06}
are not abstractions in the usual sense, in that they approximate the
behavior of an interconnection of the plant with low-level
controllers, rather than the behavior of the plant itself.
In~\cite{CainesWei98} one is required to derive a quantizer in
accordance with the exact plant dynamics, and to verify rather complex
system properties.
Moreover, those hierarchical schemes require exact state information or, in 
\label{review:item5:text}
the case of linear output feedback~\cite{HabetsCollinsvanSchuppen12}, require
exact output information, and are unable to account for quantized
or perturbed measurements.
Additionally, for general nonlinear plants,
all of the aforementioned approaches
require the synthesis of low-level controllers to enforce
a high-level plan, which is considered as an open
problem~\cite{KressGazitWongpiromsarnTopcu11} and current
solutions exist
only for rather restrictive classes of
\label{review:item4:text}
systems~\cite{HabetsCollinsvanSchuppen12,HelwaBroucke13,HelwaCaines15}. 
In contrast, the refinement step in our approach
is completely independent of the plant dynamics and
\label{review:item6:text}
does not involve the design of low-level controllers.
\looseness-1

For any of the aforementioned approaches, often a lack of robustness
further restricts the applicability of the methods.  For example,
\cite{KreisselmeierBirkholzer94,YordanovTumovaCernaBarnatBelta12,Girard13} do not cover uncertainties
in plant dynamics, while
in~\cite{GrueneJunge07,YordanovTumovaCernaBarnatBelta12,Girard13,CainesWei98,FainekosKressGazitPappas05,HabetsCollinsvanSchuppen06}
the quantizer is assumed to be deterministic which mandates the state
measurement to be precise, without any error; see
Section~\ref{ss:Uncertainties}.

Similarly to our work, the synthesis scheme
in~\cite{LiuOzay14} introduces a novel system relation.
However, in contrast to the theory in~\cite{LiuOzay14}, feedback
refinement relations do not rely on a metric of the state
alphabet, which is crucial in establishing the necessity as well as
the canonicity result. Likewise, the authors of \cite{LiuOzay14}
consider perturbations, but assume that the effect of these
perturbations is given as level sets of a metric.

In addition to a general synthesis framework, we present
a method to construct abstractions
of perturbed nonlinear control systems. The abstractions are based on a
cover of the state alphabet by non-empty compact hyper-intervals and the
over-approximation of attainable sets of those hyper-intervals under
the system dynamics.
While the use of attainable
sets for the construction of abstractions is a well-known
concept~\cite{GrueneJunge07,i11abs,ZamaniPolaMazoTabuada10,RunggerStursberg12},
none of the aforementioned works accounts for uncertainties or perturbations.
Moreover, while our method to over-approximate attainable sets is
similar to those in \cite{ZamaniPolaMazoTabuada10,RunggerStursberg12}
in that it is based on a growth bound, we present several extensions
that render the approach more efficient.

To summarize, our contribution is threefold. First, we introduce feedback
refinement relations as a novel means to synthesize symbolic
controllers. We show that feedback refinement relations are necessary and sufficient
for the controller refinement that solves the state information issue
and the refinement complexity issue.
Our theory applies to a more general class of synthesis problems than
previous research that addresses the mentioned issues, and in
particular, any linear time property can serve as a specification.
Second, our work permits the synthesis of controllers that robustly,
and provably, enforce the specification in presence of various
uncertainties and disturbances.
Third, we identify a class of canonical abstractions and present a
method to compute such abstractions.  Our construction improves known
methods in several directions and thereby, as we demonstrate by some
numerical examples, facilitates a more efficient computation of
abstractions of perturbed nonlinear control systems.

Some of the results we present have been announced in \cite{i14symc}.

\section{Notation}
\label{s:prelims}

The relative complement of the set $A$ in the set $B$ is denoted by
$B \setminus A$.
$\mathbb{R}$, $\mathbb{R}_+$, $\mathbb{Z}$ and $\mathbb{Z}_{+}$ denote the sets of
real numbers, non-negative real numbers, integers and non-negative integers, respectively,
and $\mathbb{N} = \mathbb{Z}_{+} \setminus \{ 0 \}$. We adopt the
convention that $\pm \infty + x = \pm \infty$ for any
$x \in \mathbb{R}$.
$\intcc{a,b}$, $\intoo{a,b}$,
$\intco{a,b}$, and $\intoc{a,b}$
denote closed, open and half-open, respectively,
intervals with end points $a$ and $b$.
$\intcc{a;b}$, $\intoo{a;b}$,
$\intco{a;b}$, and $\intoc{a;b}$ stand for discrete intervals, e.g.
$\intcc{a;b} = \intcc{a,b} \cap \mathbb{Z}$
and
$\intco{0;0} = \emptyset$.

In $\mathbb{R}^n$, the relations $<$, $\leq$, $\geq$, $>$ are defined
component-wise, e.g.
$a < b$ iff $a_i < b_i$ for all $i \in \intcc{1;n}$.

$f \colon A \rightrightarrows B$ denotes a \begriff{set-valued map} of
$A$ into $B$, whereas $f \colon A \to B$ denotes an ordinary map; see
\cite{RockafellarWets09}. If $f$ is
set-valued, then $f$ is \begriff{strict} and \begriff{single-valued} if
$f(a) \not= \emptyset$ and $f(a)$ is a singleton,
respectively, for every $a$.
The restriction of $f$ to a subset $M \subseteq A$ is denoted
$f|_{M}$.
Throughout the text, we denote the identity map
$X \to X \colon x \mapsto x$ by $\id$. The domain of definition $X$
will always be clear form the context.

We identify set-valued maps
$f \colon A \rightrightarrows B$ with binary relations on
$A \times B$, i.e., $(a,b) \in f$ iff $b \in f(a)$.
Moreover, if $f$ is single-valued, it
is identified with an ordinary map $f \colon A \to B$.
The inverse mapping $f^{-1} \colon B \rightrightarrows A$ is defined
by $f^{-1}(b) = \Menge{a \in A}{b \in f(a)}$, and $f \circ g$ denotes
the composition of $f$ and $g$, $(f \circ g)(x) = f(g(x))$.

The set of maps $A \to B$ is denoted $B^A$, and the set of all signals
that take their values in $B$ and are defined on intervals of the form
$\intco{0;T}$ is denoted $B^{\infty}$,
$B^{\infty} = \bigcup_{T \in \mathbb{Z}_{+}\cup \{\infty\}} B^{\intco{0;T}}$.

\section{Plants, Controllers, and Closed Loops}
\label{s:Systems}

\subsection{Systems}
\label{ss:SystemsBehaviorsErrorCompletion}

We consider dynamical systems of the form
\begin{IEEEeqnarray}{c}\label{e:simpleSys}
\begin{IEEEeqnarraybox}[][c]{rCl}
x(t+1)&\in&F(x(t),u(t))\\
  y(t)&\in&H(x(t),u(t)).
\end{IEEEeqnarraybox}
\end{IEEEeqnarray}
The motivation to use a set-valued transition function $F$ and a
set-valued output function $H$ in our system description, originates from
the desire to describe disturbances and other kinds of
non-determinism in a unified and concise manner.
This description is also sufficiently expressive to model the
plant and the controller, but unfortunately leads to subtle issues
with interconnected systems. Consider e.g.~the serial composition in
\ref{f:serial}, where
$F_i \colon X_i \times U_i \rightrightarrows X_i$,
$X_1 = U_1 = \{0\}$, $X_2 = U_2 = \{0,1\}$,
$Y_2 = \{ a, b, c \}$,
$F_1(0,0) = \{0\}$, $H_1(0,0) = U_2$,
and $F_2$ and
$H_2 \colon X_2 \times U_2 \rightrightarrows Y_2$
are given as follows:
$F_2(1,0) = F_2(0,1) = \{ 0 \}$,
$F_2(0,0) = F_2(1,1) = \{ 1 \}$,
$H_2(0,0) = H_2(1,0) = \{ a \}$,
$H_2(0,1) = \{ b \}$, and
$H_2(1,1) = \{ c \}$.
To recover the behavior at the terminals $u_1$ and $y_2$ with
a system of the form~\ref{e:simpleSys}, we let
$X = X_1 \times X_2$,
$F \colon X \times U_1 \rightrightarrows X$ and
$H \colon X \times U_1 \rightrightarrows Y_2$.
As $Y_2$ contains more elements than $X \times U_1$, which can
all appear in $y_2$, the map $H$ must be multi-valued, which in turn
implies that the following property of the composed system in
\ref{f:serial} cannot be retained:
Between any two appearances of $b$ in $y_2$ there are an even number of
$a$'s, and between any appearance of $b$ and any appearance of $c$
there are an odd number of $a$'s.
It follows that the class of systems of the form~\ref{e:simpleSys} is
not closed under interconnection, given the natural constraint that
the state alphabet of the composed system equals the product of the
state alphabets of the individual systems.
To circumvent this problem we
\begin{figure}[h]
\centering
\begin{tikzpicture}[node distance=1.8cm, >=latex]
\def\dx{0.15}
\draw node at (0,0) [block] (F1)  {$F_1$};
\draw node[right of=F1] [block] (H1)  {$H_1$};

\draw[<-] ($(F1.west)-(0,\dx)$)  -- node[below,near end] {$u_1$} ++(-1.25,0);
\draw[->] ($(F1.east)+(0,\dx)$)  -- node[above,near end] {$x_1$} ($(H1.west)+(0,\dx)$);
\draw[->] ($(F1.east)+(.5,\dx)$) -- ++(0,0.4) -- ++(-1.75,0) |- ($(F1.west)+(0,\dx)$);
\draw[->] ($(F1.west)-(0,\dx)$) ++(-.5,0) -- ++(0,-0.4) -- ++(1.75,0) |- ($(H1.west)-(0,\dx)$);
\draw[->] node at ($(F1.west)-(.5,\dx)$) {\textbullet};
\draw[->] node at ($(F1.east)+(.5,\dx)$) {\textbullet};

\draw node at ($(F1.east)+(0.25,\dx)$) {$\sslash$};

\draw[dashed] ($(F1.west)-(.65,.75)$) rectangle ++(3.4,1.5);
\draw node at (4,0) [block] (F2)  {$F_2$};
\draw node[right of=F2] [block] (H2)  {$H_2$};

\draw[->] (H2)  -- node[below,near end] {$y_2$} ++(1.25,0);
\draw[->] ($(F2.east)+(0,\dx)$)  -- node[above,near end] {$x_2$} ($(H2.west)+(0,\dx)$);
\draw[->] ($(F2.east)+(.5,\dx)$) -- ++(0,0.4) -- ++(-1.75,0) |- ($(F2.west)+(0,\dx)$);
\draw[->] ($(F2.west)-(0,\dx)$) ++(-.5,0) -- ++(0,-0.4) -- ++(1.75,0) |- ($(H2.west)-(0,\dx)$);
\draw[->] node at ($(F2.west)-(.5,\dx)$) {\textbullet};
\draw[->] node at ($(F2.east)+(.5,\dx)$) {\textbullet};

\draw node at ($(F2.east)+(0.25,\dx)$) {$\sslash$};

\draw[dashed] ($(F2.west)-(.65,.75)$) rectangle ++(3.4,1.5);
\draw[->]
($(H1.east)-(0,\dx)$) -- 
node[below, label={[shift={(-.25,-.5)}]{$y_1$}}]{} 
node[below, label={[shift={(-.25,0)}]{$u_2$}}]{} 
($(F2.west)-(0,\dx)$);

\end{tikzpicture}
\caption{Serial composition of two dynamical systems of the form
  \ref{e:simpleSys}. The symbol
$\sslash$ denotes a delay.}\label{f:serial}
\end{figure}
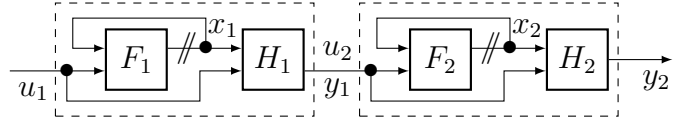
consider a slightly more general form of system dynamics given by
\begin{subequations}
\label{e:System}
\begin{align}
\label{e:System:dyn}
x(t+1) &\in F( x(t), v(t) ),\\
\label{e:System:inout}
( y(t), v(t) ) &\in H( x(t), u(t) ),
\end{align}
\end{subequations}
where $v$ is an \begriff{internal variable}. 
We formalize the notion of system as follows.

\begin{definition}
\label{def:System}
A \begriff{system} is a septuple
\begin{equation}
\label{e:def:System}
S = (X, X_0, U, V, Y, F, H),
\end{equation}
where
$X$, $X_0$, $U$, $V$ and $Y$ are nonempty sets,
$X_0 \subseteq X$,
$H \colon X \times U \rightrightarrows Y \times V$ is strict,
and $F \colon X \times V \rightrightarrows X$.
\\
A quadruple
$(u,v,x,y) \in U^{\intco{0;T}} \times V^{\intco{0;T}} \times X^{\intco{0;T}} \times Y^{\intco{0;T}}$
is a \begriff{solution} of the system \ref{e:def:System} (on
$\intco{0;T}$, starting at $x(0)$)
if $T \in \mathbb{N} \cup \{ \infty \}$,
\ref{e:System:dyn} holds for all $t \in \intco{0;T-1}$,
\ref{e:System:inout} holds for all $t \in \intco{0;T}$, and
$x(0) \in X_0$.
\end{definition}
The internal variables allow us to introduce the constraint $u_2=y_1$ imposed by the
composition in~\ref{f:serial} and recover the behavior of the serial composed
system with a system of the form~\ref{e:def:System} given 
by $X = X_0 = \{ 0, 1 \}$, $U = \{ 0 \}$, 
$V=Y=\{a,b,c\}$ with $F(0,a)=F(1,c)=\{1\}$, $F(1,a)=F(0,b)=\{0\}$ and
$H(0,0)=\{(a,a),(b,b)\}$, $H(1,0)=\{(a,a),(c,c)\}$.

We call the sets $X$, $X_0$, $U$, $V$, and $Y$
the \begriff{state}, \begriff{initial state}, \begriff{input}, \begriff{internal variable}, and
\begriff{output alphabet}, respectively.
The functions
$F$ and $H$ are, respectively, the \begriff{transition function}
and the \begriff{output function} of~\ref{e:def:System}.
We call the system \ref{e:def:System}
\begin{enumerate}
  \item \begriff{autonomous} if $U$ is a singleton;
  \item \begriff{static} if $X$ is a singleton;
\label{review:item1:text}
  \item \begriff{Moore} if the output does not depend on the input, i.e.,
$(y,v) \in H(x,u) \wedge u' \in U$
\implies
$\exists_{v'} (y,v') \in H(x,u')$;
\footnote{The notation $\exists_s A$ reads as ``there exists $s$ such
that the statement $A$ holds''.}
\item
\begriff{simple}, if
$U = V$, $X = Y$, $H = \id$, and
all states are admissible as initial states, i.e., $X = X_0$.
\end{enumerate}
We assume throughout that the plant is given by a
simple system, which restricts our theory to that class of plants.

\subsection{System composition}
\label{ss:system_composition}

In the following, we define the serial and feedback composition of
two systems. We start with the serial composition.

\begin{definition}
\label{def:serial}
Let
$S_i = (X_i,X_{i,0},U_i,V_i,Y_i,F_i,H_i)$ be systems,
$i \in \{1,2\}$, and assume that
$Y_1 \subseteq U_2$. Then $S_1$ is \begriff{serial composable} with $S_2$, and 
the \begriff{serial composition} of $S_1$ and $S_2$,
denoted $S_2 \circ S_1$, is the septuple
\[
(X_{12}, X_{1,0} \times X_{2,0}, U_1, V_{12}, Y_{2}, F_{12}, H_{12}),
\]
where
$X_{12} = X_1 \times X_2$,
$V_{12} = V_1 \times V_2$, 
$F_{12} \colon X_{12} \times V_{12} \rightrightarrows X_{12}$ and
$H_{12} \colon X_{12} \times U_1 \rightrightarrows Y_{2} \times V_{12}$ satisfy
\ifCLASSOPTIONonecolumn
\begin{IEEEeqnarray*}{rCl}
F_{12}(x,v) &=& F_1(x_1,v_1) \times F_2(x_2,v_2),\\
H_{12}(x,u_1) &=&
\{ (y_2,v) \mid\exists_{y_1}
(y_1,v_1) \in H_1(x_1,u_1)
\wedge
(y_2,v_2) \in H_2(x_2,y_1)
\}.
\popQED
\end{IEEEeqnarray*}
\else
\begin{IEEEeqnarray*}{rCl}
F_{12}(x,v) &=& F_1(x_1,v_1) \times F_2(x_2,v_2),\\
H_{12}(x,u_1) &=&
\{ (y_2,v) \mid\exists_{y_1}
(y_1,v_1) \in H_1(x_1,u_1)
\\
&&\hphantom{\{ (y,v) |\exists_{y_1}(y_1,v_1)}
\wedge
(y_2,v_2) \in H_2(x_2,y_1)
\}.
\popQED
\end{IEEEeqnarray*}
\fi
\end{definition}
We readily see that the output function $H_{12}$
is strict which implies that $S_2 \circ S_1$ is a system.
We use the serial composition mainly to describe the
interconnection of  
an input quantizer $Q \colon U' \rightrightarrows U$
or a state quantizer
$Q\colon X \rightrightarrows X'$ with a system $S$
of the form \ref{e:def:System}.
We assume that $Q$ is strict and interpret the quantizer as a static
system with strict transition function. Suppose that $U'$ is a non-empty set, 
then the serial composition $S \circ Q$ of $Q$ and $S$ is defined by
\[
S \circ Q
=
(X, X_0, U', V, Y, F, H'),
\]
where $H' \colon X \times U' \rightrightarrows Y \times V$ takes the form
$H'(x,u') = H(x,Q(u'))$.
Now suppose that $S$ is
simple,
then we may interpret $Q \colon X \rightrightarrows X'$
as a measurement map that yields a quantized version
of the state of the system $S$.
This situation is modeled by the serial composition $Q \circ S$ of $S$ and $Q$,
\[
Q \circ S
=
(X, X, U, U, X', F, H'),
\]
where $H'$ takes the form $H'(x,u) = Q(x) \times \{ u \}$.

We turn our attention to the feedback composition of two systems as
illustrated in~\ref{fig:ClosedLoopMooreMealy}.

\begin{definition}
\label{def:ClosedLoop}
Let
$S_i = (X_i,X_{i,0},U_i,V_i,Y_i,F_i,H_i)$ be systems,
$i \in \{1,2\}$, and assume that
$S_2$ is Moore,
$Y_2 \subseteq U_1$ and
$Y_1 \subseteq U_2$, and that the following condition holds:
\begin{condZ}
\label{e:def:ClosedLoop:composable}
If $(y_2,v_2) \in H_2(x_2,y_1)$, $(y_1,v_1) \in H_1(x_1,y_2)$ and
$F_2(x_2,v_2) = \emptyset$, then
$F_1(x_1,v_1) = \emptyset$.
\end{condZ}
Then $S_1$ is \begriff{feedback composable} with $S_2$, and 
the \begriff{closed loop} composed of $S_1$ and $S_2$,
denoted $S_1 \times S_2$, is the septuple
\[
(X_{12}, X_{1,0} \times X_{2,0}, \{0\}, V_{12}, Y_{12}, F_{12}, H_{12}),
\]
where
$X_{12} = X_1 \times X_2$,
$V_{12} = V_1 \times V_2$,
$Y_{12} = Y_1 \times Y_2$, and
$F_{12} \colon X_{12} \times V_{12} \rightrightarrows X_{12}$ and
$H_{12} \colon X_{12} \times \{0\} \rightrightarrows Y_{12} \times V_{12}$
satisfy
\ifCLASSOPTIONonecolumn
\begin{IEEEeqnarray*}{rCl}
F_{12}(x,v) &=& F_1(x_1,v_1) \times F_2(x_2,v_2),\\
H_{12}(x,0) &=&
\{ (y,v) |
(y_1,v_1) \in H_1(x_1,y_2)
\wedge
(y_2,v_2) \in H_2(x_2,y_1)
\}.
\popQED
\end{IEEEeqnarray*}
\else
\begin{IEEEeqnarray*}{rCl}
F_{12}(x,v) &=& F_1(x_1,v_1) \times F_2(x_2,v_2),\\
\notag
H_{12}(x,0) &=&
\{ (y,v) |
(y_1,v_1) \in H_1(x_1,y_2)
\\&&\hphantom{\{ (y,v) | (y_1,v_1)}
\wedge
(y_2,v_2) \in H_2(x_2,y_1)
\}.
\popQED
\end{IEEEeqnarray*}
\fi
\end{definition}

\begin{figure}[h]
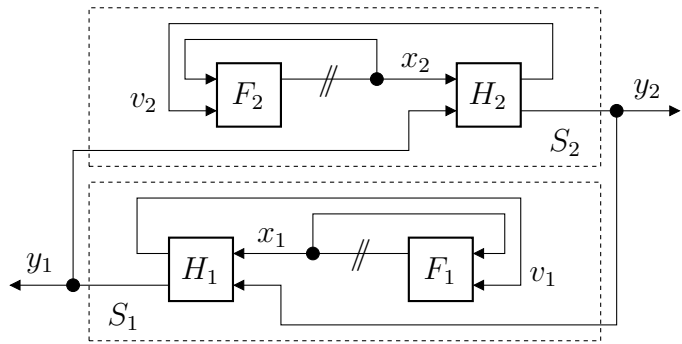

\begin{center}
\psfrag{F1}[][]{$F_1$}
\psfrag{F2}[][]{$F_2$}
\psfrag{H1}[][]{$H_1$}
\psfrag{H2}[][]{$H_2$}
\psfrag{y1}[][]{$y_1$}
\psfrag{y2}[][]{$y_2$}
\psfrag{x1}[][]{$x_1$}
\psfrag{x2}[][]{$x_2$}
\psfrag{v1}[l][l]{$v_1$}
\psfrag{v2}[r][r]{$v_2$}
\psfrag{S1}[l][l]{$S_1$}
\psfrag{S2}[r][r]{$S_2$}
\psfrag{//}[][]{$\sslash$}
\pgfdeclareimage[width=\ifCLASSOPTIONonecolumn.49\fi\linewidth]{ClosedLoopMooreMealy}{figures/ClosedLoop.MooreMealy.InputFilter}%
\noindent
\pgfuseimage{ClosedLoopMooreMealy}
\end{center}
\caption{\label{fig:ClosedLoopMooreMealy}
Closed loop $S_1 \times S_2$ of systems $S_1$ and $S_2$ according to
Definition \ref{def:ClosedLoop}, in which
the system $S_2$ is required to be Moore.
}
\end{figure}

The requirement \ref{e:def:ClosedLoop:composable}, which has its
analog in the theory developed in \cite{Tabuada09}, is particularly
important and will be needed later to ensure that if the concrete closed
loop is non-blocking, then so is the abstract closed loop.
The assumption that $S_2$ is additionally Moore is common
\cite{Vidyasagar81} and ensures that the closed loop does not contain
a delay free cycle.
We emphasize that
we avoid the assumption
that the controller is allowed to set the initial state of the plant,
as appears e.g. in~\cite{Tabuada09}.

We conclude this section with a proposition that we use in several
proofs throughout the paper.

\begin{proposition}
\label{prop:ClosedLoop:0}
Let $S_1$ be feedback composable with $S_2$,
and let $T \in \mathbb{N} \cup \{\infty\}$.
Then the closed loop $S_1 \times S_2$ is an autonomous Moore system,
and
$(0,v,x,y)$ is a solution of $S_1 \times S_2$ on $\intco{0;T}$ iff
$(y_2,v_1,x_1,y_1)$ is a solution of $S_1$ on $\intco{0;T}$ and
$(y_1,v_2,x_2,y_2)$ is a solution of $S_2$ on $\intco{0;T}$.
\end{proposition}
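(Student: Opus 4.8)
The plan is to unwind all the definitions and check the two assertions of the proposition more or less by inspection.

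\textbf{Step 1: the closed loop is autonomous and Moore.}
By Definition~\ref{def:ClosedLoop}, the input alphabet of $S_1 \times S_2$ is the singleton $\{0\}$, so $S_1 \times S_2$ is autonomous by definition. For the Moore property we must show that if $(y,v) \in H_{12}(x,0)$ and $0 \in \{0\}$ — which is the only available input — then there exists $v'$ with $(y,v') \in H_{12}(x,0)$; taking $v' = v$ works trivially. (The point of recording this is that a closed loop may later be composed again, and the Moore property is needed for that; the verification itself is immediate.)

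\textbf{Step 2: translating the solution condition.}
Fix $T \in \mathbb{N} \cup \{\infty\}$ and a quadruple $(0,v,x,y)$ with $v = (v_1,v_2)$, $x = (x_1,x_2)$, $y = (y_1,y_2)$ of the appropriate signal types. By Definition~\ref{def:System}, $(0,v,x,y)$ is a solution of $S_1 \times S_2$ on $\intco{0;T}$ iff
(i) $x(t+1) \in F_{12}(x(t),v(t))$ for all $t \in \intco{0;T-1}$,
(ii) $(y(t),v(t)) \in H_{12}(x(t),0)$ for all $t \in \intco{0;T}$, and
(iii) $x(0) \in X_{1,0} \times X_{2,0}$.
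Now substitute the formulas for $F_{12}$ and $H_{12}$ from Definition~\ref{def:ClosedLoop}. Since $F_{12}(x(t),v(t)) = F_1(x_1(t),v_1(t)) \times F_2(x_2(t),v_2(t))$, condition (i) holds iff $x_1(t+1) \in F_1(x_1(t),v_1(t))$ and $x_2(t+1) \in F_2(x_2(t),v_2(t))$ for all $t \in \intco{0;T-1}$; this uses only that membership in a product is the conjunction of the two coordinate memberships, so (i) splits into exactly the dynamics conditions for $S_1$ (with internal variable $v_1$, driven by "input" $y_2$) and for $S_2$ (with internal variable $v_2$, driven by "input" $y_1$). Similarly, by the definition of $H_{12}$, condition (ii) holds iff for every $t \in \intco{0;T}$ we have $(y_1(t),v_1(t)) \in H_1(x_1(t),y_2(t))$ and $(y_2(t),v_2(t)) \in H_2(x_2(t),y_1(t))$, i.e.\ exactly the output conditions \ref{e:System:inout} for $S_1$ with input signal $y_2$ and for $S_2$ with input signal $y_1$. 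Finally (iii) is equivalent to the conjunction $x_1(0) \in X_{1,0}$ and $x_2(0) \in X_{2,0}$. Collecting these equivalences, and recalling that $Y_2 \subseteq U_1$ and $Y_1 \subseteq U_2$ so that $y_2$ is an admissible input signal for $S_1$ and $y_1$ for $S_2$, we conclude that (i)–(iii) together hold iff $(y_2,v_1,x_1,y_1)$ is a solution of $S_1$ on $\intco{0;T}$ and $(y_1,v_2,x_2,y_2)$ is a solution of $S_2$ on $\intco{0;T}$, which is the claimed equivalence.

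\textbf{Main obstacle.}
There is no deep obstacle; the proof is a bookkeeping exercise in matching up the index sets $\intco{0;T-1}$ and $\intco{0;T}$ and the roles of the variables $y_1,y_2$ as outputs of one subsystem and inputs of the other. The one place that requires a moment's care is making sure the equivalence between (ii) and the two separate output conditions is genuinely "iff" in both directions: the definition of $H_{12}$ writes $H_{12}(x,0) = \{(y,v) \mid (y_1,v_1)\in H_1(x_1,y_2) \wedge (y_2,v_2)\in H_2(x_2,y_1)\}$, so membership of $(y(t),v(t))$ is by construction the conjunction of the two subsystem conditions at time $t$, with no existential quantifier to unpack (unlike the serial composition), which makes the split clean. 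One should also note in passing that condition \ref{e:def:ClosedLoop:composable} and the Moore assumption on $S_2$ are not needed for this proposition — they are only needed to ensure feedback composability is well-behaved elsewhere — so they play no role in the argument beyond guaranteeing that $S_1 \times S_2$ is defined.
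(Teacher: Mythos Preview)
Your Step~2 is correct and matches the paper's approach to the solution characterization (which the paper dismisses as ``straightforward''). The gap is in Step~1: you have not shown that $S_1 \times S_2$ is a \emph{system} at all. Definition~\ref{def:System} requires the output function to be strict, and the strictness of $H_{12}$ is not automatic. Look at its defining condition: $(y,v) \in H_{12}(x,0)$ requires $(y_1,v_1) \in H_1(x_1,y_2)$ and $(y_2,v_2) \in H_2(x_2,y_1)$ simultaneously, so $y_1$ and $y_2$ must be mutually consistent. Strictness of $H_1$ and $H_2$ alone does not resolve this circularity.

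This is exactly where the Moore assumption on $S_2$ enters, contrary to your final remark. The paper breaks the circularity as follows: pick any $a \in Y_1$; strictness of $H_2$ gives $(y_2,b) \in H_2(x_2,a)$; strictness of $H_1$ then gives $(y_1,v_1) \in H_1(x_1,y_2)$; finally the Moore property of $S_2$ guarantees some $v_2$ with $(y_2,v_2) \in H_2(x_2,y_1)$, yielding $(y,v) \in H_{12}(x,0)$. Without $S_2$ Moore this argument fails. So your claim that the Moore hypothesis ``plays no role in the argument beyond guaranteeing that $S_1 \times S_2$ is defined'' is wrong: the septuple is defined regardless, but showing it is a system --- which the proposition explicitly asserts --- genuinely requires the Moore property.
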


\begin{proof}
We claim that $H_{12}$ is strict. Indeed, assume that $x \in X_{12}$
and $a \in Y_1$. Since $H_1$ and $H_2$ are both strict, there exist
$(y_2,b) \in H_2(x_2,a)$ and $(y_1,v_1) \in H_1(x_1,y_2)$. Then there
exists $v_2$ satisfying $(y_2,v_2) \in H_2(x_2,y_1)$ as $S_2$ is
Moore, and so $(y,v) \in H_{12}(x,0)$.
This proves our claim.
The
remaining requirements in Definition \ref{def:System} are clearly
satisfied, which shows that $S_1 \times S_2$ is a system, and that
system is autonomous, and hence, Moore.
The claim on the solutions of $S_1 \times S_2$ is straightforward to
prove using Definitions \ref{def:System} and \ref{def:ClosedLoop}.
\end{proof}

\section{Motivation}
\label{s:pitfalls}

In this section, we provide two examples that 
demonstrate the state information issue and the refinement complexity
issue, which have led to the
development of the novel notion of feedback refinement relation.
Both examples show that the
drawbacks do not depend on the specific refinement technique, but
are \begriff{intrinsic} to the use of alternating (bi)simulation
relations, bisimulation relations and their approximate variants.

Let us consider two
systems $S_1$ and $S_2$ and
two controllers $C_1$ and $C_2$,
\begin{align*}
S_i
&=
( X_i, X_i, U, U, Y, F_i, H_i ),\\
C_i
&=
( X_{c,i}, X_{c,i,0}, Y, V_{c,i}, U, F_{c,i}, H_{c,i} ),
\end{align*}
in which we assume that the transition functions of the four systems
are all strict, that $X_i \subseteq Y$, and that
$H_i(x,u) = \{ (x,u) \}$ for all $(x,u) \in X_i \times U$.
We
readily see that the controller $C_i$ is feedback composable with the
system $S_i$, $i\in \{1,2\}$. 
Subsequently, we interpret $S_1$ as
the concrete system and $S_2$ as its abstraction.

Let \mbox{$Q\subseteq X_1\times X_2$} be a strict relation.
Then $Q$ is an
\begriff{alternating simulation relation}
from $S_1$ to $S_2$ 
if the following holds
for every pair $(x_1, x_2) \in Q$:
\begin{condASR}
\label{cond:ASR}
If $u_2 \in U$, then there exists $u_1 \in U$ such that the condition
\begin{equation}
\label{e:SR_ASR_condition}
\emptyset
\not=
Q(x_1') \cap F_2(x_2,u_2)
\end{equation}
holds for every $x_1' \in F_1(x_1, u_1)$.
\end{condASR}
Note that usually there is an additional condition on outputs of
related states, which here would have required the notion of
approximate rather than ordinary alternating simulation relation
\cite[Def.~9.6]{Tabuada09}. Since that subtlety is not essential
to our discussion, we omit it here in favor of a
clearer presentation.

As already mentioned, alternating simulation relations are often used
to prove the correctness of a particular abstraction-based controller
design procedure.
The very center of any such argument is the
reproducibility of the system behavior of the concrete closed loop
$C_1\times S_1$ by the abstract closed loop $C_2\times S_2$, i.e., for every solution
$(0,v_1,(x_{c,1},x_{s,1}), y_1)$ of $C_1 \times S_1$
on $\mathbb{Z}_{+}$ there exists a solution
$(0,v_2,(x_{c,2},x_{s,2}), y_2)$ of $C_2 \times S_2$
on $\mathbb{Z}_{+}$ satisfying
\begin{equation}
\label{e:s:pitfalls:reproducibility}
( x_{s,1}(t), x_{s,2}(t) ) \in Q
\text{\ for all $t \in \mathbb{Z}_{+}$}.
\end{equation}
This reproducibility property is then used to
provide evidence that certain properties that the abstract closed loop
$C_2 \times S_2$ satisfies, actually also hold for the concrete closed
loop $C_1 \times S_1$.
\looseness-1

In the first example, we show that~\ref{e:s:pitfalls:reproducibility}
cannot hold if $C_1$ attains state information only through $Q$, i.e.,
if $C_1$ takes the form $C_1' \circ Q$.
In other words,
the refined controller cannot be symbolic but requires full state information.

\begin{example}
\label{ex:Beispiel5_25Apr14}
We consider the systems
$S_1$  and $S_2$ which we graphically illustrate by

\begin{tikzpicture}[>=latex,thick,shorten >=1pt,node distance=1.5 cm]
\small
\node at (0,0) {\normalsize $S_1:$};

\node[state]  at (1,0)  (A)  {$3$};
\node[state]            (B) [right of=A]   {$1$};
\node[state]            (C) [right of=B]   {$2$};

\path[->] (A) edge[above]       node  {$0$} (B);
\path[->] (C) edge[above]       node  {$1$} (B);

\draw[->]  (A) to [my loop] node[right,near start] {$1$}(A);
\draw[->]  (C) to [my loop] node[right,near start] {$0$}(C);
\draw[->]  (B) to [my loop] node  {} (B);

\end{tikzpicture}
\begin{tikzpicture}[>=latex,thick,shorten >=1pt,node distance=1.5 cm]
\small

\node at (0,0) {\normalsize $S_2:$};

\node[state]  at (1,0)  (A)  {$3$};
\node[state]            (B) [right of=A]   {$1$};

\path[->] (A) edge[above]       node  {$0$} (B);
\draw[->] (A) to [my loop] node[right,near start] {$1$}(A);
\draw[->] (B) to [my loop] (B);

\end{tikzpicture} \\
The input and output alphabets of $S_1$ and $S_2$ are given by
$U=\{0,1\}$ and $Y=\{1,2,3\}$, respectively.
The transition functions should be clear from the illustration, e.g.
$F_1(2,1) = \{ 1 \}$ and
$F_1(1,u) = \{ 1 \}$ for any $u \in U$.
It is also easily verified that the relation $Q$ given by
$
Q
=
\{ (1,1), (2,3), (3,3) \}
$
is an alternating simulation relation from $S_1$ to $S_2$.

Let the abstract controller $C_2$ be static with
$X_{c,2} = \{ 0 \}$, $V_{c,2} = Y$, and
$H_{c,2}(0,3) = \{ (0,3) \}$,
i.e., $C_2$ enables exactly the
control letter $0$ at the abstract state $3$.
If the concrete controller $C_1$ is symbolic,
then, at the initial time, the sets of control letters
enabled at the plant states $2$ and $3$ coincide. Indeed, these sets
must only depend on the associated abstract states, and $Q(2) = Q(3)$.
In addition, by the symmetry of the plant $S_1$, we may assume without
loss of generality that the control letter $0$ is enabled at the
initial time, so that there exists a solution
$(0,v_1,(x_{c,1},x_{s,1}), y_1)$ of the closed loop $C_1 \times S_1$
satisfying $x_{s,1}(0) = x_{s,1}(1) = 2$.
Then the condition \ref{e:s:pitfalls:reproducibility} requires 
$x_{s,2}(0) = x_{s,2}(1) = 3$ to hold for some solution
$(0,v_2,(x_{c,2},x_{s,2}), y_2)$ of $C_2 \times S_2$ -- a requirement
that contradicts the dynamics of $C_2 \times S_2$.
This shows that the property of reproducibility cannot be attained
using a symbolic controller for the plant $S_1$.
The crucial point with this example is that the condition
\ref{cond:ASR} cannot be satisfied if
the choice of $u_1$ depends only on the abstract states
associated with the plant state $x_1$, but not directly on $x_1$
itself.
\end{example}

In the next example we show that a static controller $C_2$ for the
abstraction $S_2$ cannot be refined to a static controller $C_1$ for
the concrete system $S_1$.

\begin{example}
\label{ex:Beispiel4_25Apr14}
We consider the systems $S_1$ and $S_2$ with the transition functions illustrated graphically by\\
\begin{center}
\begin{tikzpicture}[>=latex,thick,shorten >=1pt,node distance=1.7 cm]
\small
\node at (0,0) {\normalsize $S_1:$};

\node[state]  at (1,0)  (A)  {$1$};
\node[state]            (B) [right of=A]   {$2$};
\node[state]            (C) [below of=B]   {$4$};

\path[->] (B) edge      node[above]  {$0$} (A);
\path[->] (B) edge      node[right]  {$1$} (C);

\path[->] (A) edge[bend right]       node  {} (B);
\path[->] (C) edge[bend left]       node  {} (B);

\small

\node at (4,0) {\normalsize $S_2:$};

\node[state]  at (5,0)  (A)  {$1$};
\node[state]            (B) [right of=A]   {$2$};
\node[state]            (C) [below of=B]   {$4$};
\node[state]            (D) [below of=A] {$3$};

\path[->] (B) edge     node[above]  {$0$} (A);
\path[->] (B) edge     node[right]  {$1$} (C);

\path[->] (A) edge[bend left]       node  {} (D);
\path[->] (C) edge[bend left]       node  {} (B);

\path[->] (D) edge node[left ] {$0$} (A);
\path[->] (D) edge node[below] {$1$} (C);

\end{tikzpicture} 
\end{center}
The input alphabet and the output alphabet is given by
$U = \{ 0, 1 \}$ and $Y = \{ 1, 2, 3, 4 \}$, respectively.
It is easily verified that the relation $Q$ given by
$
Q
=
\{ (1,1), (2,2), (2,3), (4,4) \}
$
is an alternating simulation relation from $S_1$ to
$S_2$. In addition, in this example the relation $Q$ satisfies even the
more restrictive requirement that $u_1 = u_2$ holds in~\ref{cond:ASR}.

Suppose that the abstract controller $C_2$ is static and enables
exactly the control letters $0$ and $1$ at the abstract states $2$ and
$3$, respectively. If the concrete controller $C_1$ is static,
then the set of control letters enabled at
the plant state $2$ does not vary with time.
By the symmetry of the
plant $S_1$, we may again assume without loss of generality that the
control letter $0$ is enabled at the state $2$, so that there exists a
solution $(0,v_1,(x_{c,1},x_{s,1}), y_1)$ of the closed loop $C_1 \times S_1$
satisfying $x_{s,1}(0) = x_{s,1}(2) = 1$. Then the condition
\ref{e:s:pitfalls:reproducibility} asks for 
$x_{s,2}(0) = x_{s,2}(2) = 1$ for some solution
$(0,v_2,(x_{c,2},x_{s,2}), y_2)$ of $C_2 \times S_2$ -- a requirement
that contradicts the dynamics of $C_2 \times S_2$.
This shows that the property of reproducibility cannot be attained
using a static controller for the plant $S_1$ despite the fact that
the abstract controller is static.
The crucial point with this example is that the condition
\ref{e:SR_ASR_condition} only mandates that for each transition from
$x_1$ to $x_1'$ in $S_1$ there exists a state $x_2' \in Q(x_1')$ that
is a successor of $x_2$ in $S_2$, but it is not required that every
$x_2' \in Q(x_1')$ succeeds $x_2$; consider e.g.~the case
$x_1 = x_2 = 1$, $x_1' = x_2' = 2$.
As a result, the state $1$ and $4$ cannot precede the
state $2$ and $3$, respectively, in $S_2$, and so, implicitly, the
static controller $C_2$ has some access to the history
of the solution. In contrast, at the state $2$ the dynamics of $S_1$
does not encode analogous information, which in fact could here only
be provided by a controller for $S_1$ that is dynamic rather than
static.
\looseness-1
\end{example}

As our examples show, alternating simulation
relations are not adequate for the controller refinement, whenever
{\bf i)} the concrete controller has merely symbolic state information
and  {\bf ii)} the complexity of the refined controller should not
exceed the complexity of the abstract controller.
Moreover, we
point out that 
in both examples the respective relation $Q$ is not merely an
alternating simulation relation
according to our definition in~\ref{cond:ASR}, but also an
$1$-approximate bisimulation relation and
$1$-approximate alternating bisimulation relation
according to Definitions~9.5 and~9.8 in~\cite{Tabuada09},
respectively. Hence, the latter
concepts also suffer from both issues described in this section.

\section{Feedback Refinement Relations}
\label{s:FeedbackRefinementRelations}

In this section, we introduce \begriff{feedback refinement relations} as
a novel means to compare systems in the context of controller
synthesis, in which we focus on simple systems.

\subsection{Definition and basic properties}

We start by introducing the behavior of a system, where
we follow the notion of \begriff{infinitary completed trace semantics}
\cite{vanGlabbeek01}.

\begin{definition}
\label{def:SystemBehavior}
Let $S$ denote the system \ref{e:def:System}. The set $\mathcal{B}(S)$,
\begin{multline}
\label{e:def:SystemBehavior:ExternalBehavior}
\hspace{-1em}
\mathcal{B}(S)
=
\{(u,y)|
\exists_{v, x, T}
(u,v,x,y) \text{ is a solution of $S$ on $\intco{0;T}$, }
\\
\text{and if $T < \infty$, then } F( x(T-1), v(T-1) ) = \emptyset\},
\end{multline}
is called the \begriff{behavior of $S$}.
\end{definition}
Note that it often occurs that a system is non-continuable for a
certain state-input pair, e.g.~the terminating state of a terminating
program. With our notion of system
behavior, which possibly consists of finite signals as well as infinite signals,
such signals are naturally included as valid elements.
In our definition of system relation below, we need a notion of state
dependent admissible inputs.
For any simple system $S$ of the form \ref{e:def:System}, we define the
set $U_S(x)$ of \begriff{admissible inputs at the state $x\in X$} by
\[
U_S(x)
=
\Menge{u \in U}{F(x,u) \not= \emptyset},
\]
and the image of a subset $\Omega \subseteq X$ under $U_S$ is denoted
$U_S(\Omega)$.

\begin{definition}
\label{def:FeedbackRefinementRelation}
Let $S_1$ and $S_2$ be
simple systems,
\begin{equation}
\label{e:systemMooreId}
S_i
=
(X_i, X_i, U_i, U_i, X_i, F_i, \id)
\end{equation}
for $i \in \{ 1, 2 \}$, 
and assume that $U_2 \subseteq U_1$.
A strict relation $Q \subseteq X_1 \times X_2$ is a
\begriff{feedback refinement relation} from $S_1$ to $S_2$ if the
following holds for all $(x_1,x_2) \in Q$:
\begin{enumerate}
\item
\label{def:FeedbackRefinementRelation:in}
$U_{S_2}(x_2) \subseteq U_{S_1}(x_1)$;
\item
\label{def:FeedbackRefinementRelation:dyn}
$u \in U_{S_2}(x_2)$ \implies
$Q(F_1(x_1,u)) \subseteq F_2(x_2, u)$.
\popQED
\end{enumerate}
\end{definition}

The fact that $Q$ is a feedback refinement relation from $S_1$ to
$S_2$ will be denoted $S_1 \preccurlyeq_Q S_2$, and we write
$S_1 \preccurlyeq S_2$ if $S_1 \preccurlyeq_Q S_2$ holds for some $Q$.

Intuitively, and similarly to simulation relations and their variants,
a feedback refinement relation from a system $S_1$ to a system
$S_2$ associates states of $S_1$ with states of
$S_2$, and imposes certain conditions on the local dynamics of the
systems in the associated states.
However, while e.g.~alternating simulation relations
only require that for each input $u_2$ admissible for $S_2$ there
exists an associated input $u_1$ admissible for $S_1$
\cite{Tabuada09}, our definition above additionally mandates that
$u_1 = u_2$.
Moreover, the definition of (approximate) alternating simulation
relation requires that for each transition from $x_1$ to $x_1'$ in
$S_1$ \emph{there exists}
a state $x_2'$ associated with $x_1'$ and a transition from
$x_2$ to $x_2'$ in $S_2$; see condition \ref{e:SR_ASR_condition}.
In contrast, feedback refinement relations
require the existence of the latter transition for \emph{every} state
$x_2'$ associated with $x_1'$.

We next show that the relation
$\preccurlyeq$ is reflexive and transitive.

\begin{proposition}
\label{prop:FeedbackRefinementRelation:follow}
Let $S_1$, $S_2$ and $S_3$ be simple systems. Then:
\begin{asparaenum}[(a)]
\item
\label{prop:FeedbackRefinementRelation:reflexive}
$S_1 \preccurlyeq_{\id} S_1$.
\item
\label{prop:FeedbackRefinementRelation:transitiv}
If $S_1 \preccurlyeq_Q S_2$ and $S_2 \preccurlyeq_R S_3$, then
$S_1 \preccurlyeq_{R \circ Q} S_3$.
\popQED
\end{asparaenum}
\end{proposition}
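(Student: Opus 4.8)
The plan is to verify the two defining conditions of Definition~\ref{def:FeedbackRefinementRelation} directly in each case, unwinding the definitions. For part~(\ref{prop:FeedbackRefinementRelation:reflexive}), write $S_1 = (X_1, X_1, U_1, U_1, X_1, F_1, \id)$. Since $\id \subseteq X_1 \times X_1$ is strict, it is a candidate relation, and for any $(x,x) \in \id$ the inclusion $U_{S_1}(x) \subseteq U_{S_1}(x)$ is trivial, settling condition~\ref{def:FeedbackRefinementRelation:in}. For condition~\ref{def:FeedbackRefinementRelation:dyn}, if $u \in U_{S_1}(x)$ then $\id(F_1(x,u)) = F_1(x,u) \subseteq F_1(x,u)$, which again holds trivially. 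Hence $S_1 \preccurlyeq_{\id} S_1$.

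For part~(\ref{prop:FeedbackRefinementRelation:transitiv}), first observe that $R \circ Q$ is strict: given $x_1 \in X_1$, strictness of $Q$ yields some $x_2 \in Q(x_1)$, and strictness of $R$ yields some $x_3 \in R(x_2)$, so $x_3 \in (R \circ Q)(x_1)$. Also, from $U_3 \subseteq U_2 \subseteq U_1$ the required alphabet inclusion $U_3 \subseteq U_1$ holds. Now fix $(x_1, x_3) \in R \circ Q$, so there is $x_2$ with $(x_1,x_2) \in Q$ and $(x_2,x_3) \in R$. For condition~\ref{def:FeedbackRefinementRelation:in}: by $S_2 \preccurlyeq_R S_3$ we get $U_{S_3}(x_3) \subseteq U_{S_2}(x_2)$, and by $S_1 \preccurlyeq_Q S_2$ we get $U_{S_2}(x_2) \subseteq U_{S_1}(x_1)$; chaining these gives $U_{S_3}(x_3) \subseteq U_{S_1}(x_1)$, as desired.

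For condition~\ref{def:FeedbackRefinementRelation:dyn} of part~(\ref{prop:FeedbackRefinementRelation:transitiv}), take $u \in U_{S_3}(x_3)$. By the inclusion just established, $u \in U_{S_2}(x_2)$ and $u \in U_{S_1}(x_1)$, so both systems' dynamics conditions apply at this $u$. We must show $(R \circ Q)(F_1(x_1,u)) \subseteq F_3(x_3,u)$. Since $(R \circ Q)(F_1(x_1,u)) = R\bigl(Q(F_1(x_1,u))\bigr)$ and set-valued images are monotone, $S_1 \preccurlyeq_Q S_2$ gives $Q(F_1(x_1,u)) \subseteq F_2(x_2,u)$, hence $R\bigl(Q(F_1(x_1,u))\bigr) \subseteq R\bigl(F_2(x_2,u)\bigr)$; and $S_2 \preccurlyeq_R S_3$ (applied at $u \in U_{S_2}(x_2)$) gives $R\bigl(F_2(x_2,u)\bigr) \subseteq F_3(x_3,u)$. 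Combining the two inclusions yields the claim, so $S_1 \preccurlyeq_{R \circ Q} S_3$.

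The argument is essentially a bookkeeping exercise with set-valued images and compositions; the only point requiring a moment's care is the order in which the alphabet inclusions must be chained so that the dynamics condition of each of $S_1 \preccurlyeq_Q S_2$ and $S_2 \preccurlyeq_R S_3$ is invoked at an input that is actually admissible for the relevant state, but this follows immediately once condition~\ref{def:FeedbackRefinementRelation:in} has been propagated. I expect no genuine obstacle here.
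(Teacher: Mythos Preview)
Your proof is correct and follows essentially the same approach as the paper's: both directly verify the two conditions of Definition~\ref{def:FeedbackRefinementRelation}, chain the admissible-input inclusions $U_{S_3}(x_3) \subseteq U_{S_2}(x_2) \subseteq U_{S_1}(x_1)$, and then use monotonicity of the set-valued images to pass $Q(F_1(x_1,u)) \subseteq F_2(x_2,u)$ through $R$ to land in $F_3(x_3,u)$. Your presentation is somewhat more explicit (e.g., spelling out strictness of $R \circ Q$ and the monotonicity step), but the argument is the same.
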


\begin{proof}
Suppose that $S_i$ is of the form \ref{e:systemMooreId},
$i \in \{1,2,3\}$.
The requirements in Def.~\ref{def:FeedbackRefinementRelation}
are satisfied with $Q = \id$, $S_1 = S_2$ and $x_1 = x_2$, which
proves \ref{prop:FeedbackRefinementRelation:reflexive}.
To prove \ref{prop:FeedbackRefinementRelation:transitiv},
assume that $S_1 \preccurlyeq_Q S_2 \preccurlyeq_R S_3$. Then
$R \circ Q$ is strict since both $R$ and $Q$ are so,
and $U_3 \subseteq U_1$.
Let $(x_1,x_3) \in R \circ Q$. Then there exists
$x_2 \in X_2$ satisfying $(x_1,x_2) \in Q$ and $(x_2,x_3) \in R$.
Thus,
$U_{S_3}(x_3) \subseteq U_{S_2}(x_2) \subseteq U_{S_1}(x_1)$, and so
the condition \ref{def:FeedbackRefinementRelation:in} in
Def.~\ref{def:FeedbackRefinementRelation} is satisfied with
$R \circ Q$ and $S_3$ in place of $Q$ and $S_2$, respectively.
As for the condition \ref{def:FeedbackRefinementRelation:dyn},
additionally assume that $u \in U_{S_3}(x_3)$. Then
$u \in U_{S_2}(x_2)$, and
$S_1 \preccurlyeq_Q S_2 \preccurlyeq_R S_3$ implies
$Q( F_1( x_1,u) ) \subseteq F_2(x_2,u)$ and
$R( F_2( x_2,u) ) \subseteq F_3(x_3,u)$.
Then
$R( Q( F_1( x_1,u) ) )
\subseteq
F_3(x_3,u)$, and so
$S_1 \preccurlyeq_{R \circ Q} S_3$.
\end{proof}

\subsection{Feedback composability and behavioral inclusion}
\label{ss:FeedbackComposabilityAndBehavioralInclusion}

In the following, we present the main result of this section.
We consider three systems $S_1$, $S_2$ and $C$ and
assume that $C$ is feedback composable with $S_2$.
We first prove that, given a feedback refinement relation $Q$ from
$S_1$ to $S_2$, $Q \circ S_1$ and $S_1$ are, respectively, feedback
composable with $C$ and $C \circ Q$.
Subsequently, we show that the behavior of the closed
loops $C\times (Q\circ S_1)$ and $(C\circ Q)\times S_1$ are both
reproducible by the closed loop $C\times S_2$.

Even though we do not assign any particular role to the systems
$S_1$, $S_2$ and $C$, in foresight of the next section, where we use
our result to develop abstraction-based solutions of general control
problems, we might regard $S_1$, $S_2$ and $C$ as the plant, the
abstraction and controller for the abstraction, respectively. In this
context, we might assume that the state of $S_1$ is accessible only
through the measurement map $Q$. In that case, $Q\circ S_1$ actually
represents the system for which we seek a controller and the behavior
of $\mathcal{B}(C\times (Q\circ S_1))$ is of interest. Alternatively,
we may start with the premise that a controller for $S_1$ needs to be
realizable on a digital device and hence, can accept only a finite
input alphabet. In that case, we may interpret $Q$ as an input
quantizer for the discrete controller $C$ and the behavior of
$\mathcal{B}((C\circ Q)\times S_1)$ is of interest. In any case,
we show that both behaviors are reproduced by the
abstract closed loop $C \times S_2$.
In the rest of the paper, we
identify $\{ 0 \} \times \left( U \times Y \right)$ with
$U \times Y$ in the obvious way.

\begin{theorem}
\label{th:BehavioralInclusionSuff}
Let $Q$ be a feedback refinement relation from the system $S_1$ to the
system $S_2$, and assume
that the system $C$ is feedback
composable with $S_2$.
Then the following
holds.
\begin{enumerate}
\item
\label{th:BehavioralInclusionSuff:item:feddback-composable}
$C$ is feedback composable with $Q \circ S_1$,
and $C \circ Q$ is feedback composable with
$S_1$.
\item
\label{th:BehavioralInclusionSuff:item:QcircS1xC}
$\mathcal{B}(C \times (Q \circ S_1))
\subseteq
\mathcal{B}(C \times S_2)$.
\item
\label{th:BehavioralInclusionSuff:item:S1xCcircQ}
For every
$(u,x_1) \in \mathcal{B}((C \circ Q) \times S_1)$
there exists a map
$x_2$
such that $(u,x_2) \in \mathcal{B}(C \times S_2)$ and
$(x_1(t), x_2(t)) \in Q$ for all
$t$ in the domain of $x_1$.
\popQED
\end{enumerate}
\end{theorem}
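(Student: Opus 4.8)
The plan is to prove the three items essentially in order, using Proposition~\ref{prop:ClosedLoop:0} throughout to pass freely between solutions of a closed loop and pairs of solutions of its constituents. For item~\ref{th:BehavioralInclusionSuff:item:feddback-composable}, I would first unwind the definitions of $Q \circ S_1$ and $C \circ Q$ from Section~\ref{ss:system_composition}: $Q \circ S_1$ has output function $(x,u)\mapsto Q(x)\times\{u\}$, and $C \circ Q$ feeds the quantized state through $C$. Since $Q$ is strict, both composite systems have strict transition functions, and the Moore property of $C$ is inherited. The only genuine content is verifying condition \ref{e:def:ClosedLoop:composable}: for $C$ feedback composable with $Q \circ S_1$ I must check that whenever the $C$-side has $F_C = \emptyset$, the $(Q\circ S_1)$-side also blocks; this follows because $Q\circ S_1$ shares the transition function $F_1$ of $S_1$, and the blocking of $F_1$ at $(x_1,u)$ is detected through $U_{S_1}$, while $C$ only ever outputs inputs that were enabled on the abstract side — here condition~\ref{def:FeedbackRefinementRelation:in} of the feedback refinement relation, $U_{S_2}(x_2)\subseteq U_{S_1}(x_1)$, does the work. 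The symmetric argument handles $C\circ Q$ feedback composable with $S_1$.

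For item~\ref{th:BehavioralInclusionSuff:item:QcircS1xC}, I would take $(u,y)\in\mathcal B(C\times(Q\circ S_1))$, so by Definition~\ref{def:SystemBehavior} there is a solution on some $\intco{0;T}$ that additionally blocks at time $T-1$ if $T<\infty$. By Proposition~\ref{prop:ClosedLoop:0} this decomposes into a solution of $Q\circ S_1$ and a solution of $C$ on $\intco{0;T}$ sharing the interconnection signals; writing the state of $Q\circ S_1$ as $x_1$ and its output as $\xi$ (a quantized version of $x_1$, i.e.\ $\xi(t)\in Q(x_1(t))$), I want to build a solution of $C\times S_2$ with the same external signal $(u,y)$. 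The controller $C$ receives $\xi(t)$ as input and produces control letters that $S_2$ can be driven by. The candidate abstract state trajectory is $x_2$ defined by $x_2(t)=\xi(t)$; I then verify by induction on $t$ that $(x_1(t),x_2(t))\in Q$ — but this is immediate since $x_2(t)=\xi(t)\in Q(x_1(t))$ — and that $x_2$ together with the \emph{same} $C$-solution forms a valid solution of $C\times S_2$. The transition step uses condition~\ref{def:FeedbackRefinementRelation:dyn}: if the control letter $w(t)$ output by $C$ lies in $U_{S_2}(x_2(t))$, then $Q(F_1(x_1(t),w(t)))\subseteq F_2(x_2(t),w(t))$, and since $x_1(t+1)\in F_1(x_1(t),w(t))$ and $x_2(t+1)=\xi(t+1)\in Q(x_1(t+1))$, we get $x_2(t+1)\in F_2(x_2(t),w(t))$. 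The delicate point, which I expect to be the main obstacle, is the endpoint/blocking condition: I must show that if the concrete closed loop blocks at $T-1$ then so does the abstract one, which is exactly where condition \ref{e:def:ClosedLoop:composable} for $C$-with-$S_2$ is invoked, and I must also confirm that $w(t)\in U_{S_2}(x_2(t))$ actually holds along the run — i.e.\ that $C$ never enables, at an abstract state, a letter that $S_2$ cannot take. This last fact should follow from $C$ being feedback composable with $S_2$ together with the structure of how $C$'s output is constrained by $H_C$; one has to argue that within a genuine solution the internal-variable matching in the closed loop forces $w(t)$ into $U_{S_2}(x_2(t))$ (otherwise composability~\ref{e:def:ClosedLoop:composable} with $S_2$ would be violated by the very solution of $C$ we extracted, in combination with strictness of $H_2$).

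For item~\ref{th:BehavioralInclusionSuff:item:S1xCcircQ}, the structure is the same but now $Q$ is read as an \emph{input} quantizer: a solution of $(C\circ Q)\times S_1$ gives, via Proposition~\ref{prop:ClosedLoop:0}, a solution of $C\circ Q$ and a solution of $S_1$. The controller $C$ now sees the true state $x_1(t)$ of $S_1$ directly as its input $y$, produces an internal control $w(t)\in U_2$, and $Q$ quantizes — wait, more precisely here $C\circ Q$ means $Q$ sits on the \emph{output} side of $C$ before it reaches $S_1$; in any case, unwinding $C\circ Q$ one extracts a solution of $C$ driven by $x_1$. I then need to produce an abstract state trajectory $x_2$ with $(x_1(t),x_2(t))\in Q$ for all $t$. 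The natural choice is to pick, for each $t$, any $x_2(t)\in Q(x_1(t))$ consistently — but consistency with the $S_2$-dynamics requires more care than in item~\ref{th:BehavioralInclusionSuff:item:QcircS1xC}, because here $C$'s input is $x_1$, not a prechosen point of $Q(x_1)$. The right move is to note that whatever $x_2(t)$ we are at, $C$'s decision depends only on what it received; and the key is that because $C$ is feedback composable with $S_2$ and $S_2$ is Moore, the input $C$ is fed in the \emph{abstract} closed loop is an output of $S_2$, i.e.\ the abstract state itself (recall $S_2$ is simple, $H_2=\id$). So the construction is: set $x_2(0)=x_2$ for the given initial relation, and inductively, given $x_2(t)\in Q(x_1(t))$, feed $x_2(t)$ into $C$; here one must reconcile the fact that in the concrete run $C$ saw $x_1(t)$ while in the abstract run it sees $x_2(t)$ — this is legitimate precisely because in the concrete closed loop $(C\circ Q)\times S_1$ the quantizer $Q$ has already collapsed $C$'s view of $x_1(t)$ to $Q(x_1(t))\ni x_2(t)$, so $C$'s admissible behaviors on input $x_2(t)$ include the one realized in the concrete run. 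Then condition~\ref{def:FeedbackRefinementRelation:dyn} again gives $Q(F_1(x_1(t),w(t)))\subseteq F_2(x_2(t),w(t))$, so we may choose $x_2(t+1)\in Q(x_1(t+1))\cap F_2(x_2(t),w(t))$, which is nonempty, and strictness of $Q$ keeps the construction alive. The blocking clause is handled as before via~\ref{e:def:ClosedLoop:composable}. The main obstacle here, as in item~\ref{th:BehavioralInclusionSuff:item:QcircS1xC}, is the careful bookkeeping showing that the control letters generated by $C$ lie in $U_{S_2}$ of the current abstract state and that finite, blocking concrete runs map to finite, blocking abstract runs — both resting on feedback composability of $C$ with $S_2$ and on the precise role of the quantizer $Q$ as the channel through which $C$ interacts with the state.
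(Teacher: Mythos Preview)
Your plan for item~\ref{th:BehavioralInclusionSuff:item:QcircS1xC} is essentially the paper's argument: extract the quantized output $\xi$ of $Q\circ S_1$, set $x_2=\xi$, and verify it is a valid $S_2$-trajectory using condition~\ref{def:FeedbackRefinementRelation:dyn}, with feedback composability of $C$ with $S_2$ supplying $u(t)\in U_{S_2}(x_2(t))$ along the run. That part is fine.

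There are two genuine problems elsewhere. First, in item~\ref{th:BehavioralInclusionSuff:item:feddback-composable} you have the direction of condition~\ref{e:def:ClosedLoop:composable} reversed. When verifying that $C$ is feedback composable with $Q\circ S_1$, the Moore system is $Q\circ S_1$, so \ref{e:def:ClosedLoop:composable} requires: if $F_1(x_1,u)=\emptyset$ then $F_c(x_c,v)=\emptyset$, not the converse you wrote. Your subsequent reasoning (``blocking of $F_1$ is detected through $U_{S_1}$, and condition~\ref{def:FeedbackRefinementRelation:in} does the work'') actually proves the correct direction, so this may just be a slip of the pen, but be careful.

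Second, and more seriously, your construction in item~\ref{th:BehavioralInclusionSuff:item:S1xCcircQ} has a real gap. You propose to build $x_2$ inductively by choosing $x_2(t+1)\in Q(x_1(t+1))\cap F_2(x_2(t),w(t))$, and you claim that ``$C$'s admissible behaviors on input $x_2(t)$ include the one realized in the concrete run'' because $Q$ has collapsed $C$'s view to $Q(x_1(t))$. That is not true in general: in the concrete run, the definition $H_c'(x_c,x_1)=H_c(x_c,Q(x_1))$ means $C$ saw \emph{some} particular $\hat x_2(t)\in Q(x_1(t))$ with $(u(t),v(t))\in H_c(x_c(t),\hat x_2(t))$, and there is no reason your inductively chosen $x_2(t)$ coincides with $\hat x_2(t)$; if it does not, $(u(t),v(t))$ need not lie in $H_c(x_c(t),x_2(t))$, and you cannot reuse the same controller solution. (Your aside that ``$Q$ sits on the output side of $C$'' is also incorrect: in $C\circ Q$ the quantizer acts on $C$'s \emph{input}.) The paper resolves this cleanly by going the other way: first extract from $H_c'$ the witnesses $\hat x_2(t)\in Q(x_1(t))$ that $C$ actually received; this immediately gives a solution of $C$ on input $\hat x_2$ and a solution of $Q\circ S_1$ with output $\hat x_2$, hence a solution of $C\times(Q\circ S_1)$, and then item~\ref{th:BehavioralInclusionSuff:item:QcircS1xC} finishes the job. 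No inductive consistency argument with $S_2$-dynamics is needed at this stage---it is already contained in item~\ref{th:BehavioralInclusionSuff:item:QcircS1xC}.
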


\begin{proof}
By our hypotheses,
$S_1$ and $S_2$ are simple,
so we assume that these systems are of the form
\ref{e:systemMooreId}.
Moreover,
\begin{equation}
\label{e:th:BehavioralInclusionSuff:proof:QcircS1}
Q \circ S_1 = ( X_1, X_1, U_1, U_1, X_2, F_1, H_1' ),
\end{equation}
where $U_2 \subseteq U_1$ and
$H_1'$ takes the form $H_1'(x,u) = Q(x) \times \{ u \}$.
Let the system $C$ be of the form
\begin{equation}
\label{e:th:TransferOfClosedLoopProperties:solutions:C}
C = ( X_c, X_{c,0}, U_c, V_c, Y_c, F_c, H_c ),
\end{equation}
and observe that $Y_c \subseteq U_1$ and $X_2 \subseteq U_c$ as
$C$ is feedback composable (f.c.) with $S_2$.
Moreover, since
$X_1 \not= \emptyset$ and $Q$ is strict,
the serial composition $C \circ Q$ is well-defined,
\[
C \circ Q
=
(X_c, X_{c,0}, X_1, V_c, Y_c, F_c, H_c' ),
\]
where $H_c'$ takes the form
$H_c'(x_c,x_1) = H_c(x_c,Q(x_1))$.

To prove \ref{th:BehavioralInclusionSuff:item:feddback-composable}, we
first observe that the conditions
\begin{equation}
\label{e:th:BehavioralInclusion:proof:composable}
x_2 \in Q(x_1),
(u,v) \in H_c(x_c,x_2),
F_1(x_1,u) = \emptyset
\end{equation}
together imply $F_c(x_c, v) = \emptyset$. Indeed, it follows from
\ref{e:th:BehavioralInclusion:proof:composable} and the requirement
\ref{def:FeedbackRefinementRelation:in} in Definition
\ref{def:FeedbackRefinementRelation} that $F_2(x_2,u) = \emptyset$,
and our claim follows as $C$ is f.c.~with $S_2$. This
shows that $C$ is f.c.~with $Q \circ S_1$.
Similarly, let $x_1 \in X_1$, $(u,v) \in H_c'(x_c,x_1)$ and
$F_1(x_1,u) = \emptyset$. Then, by the
definition of $H_c'$, there exists
$x_2 \in Q(x_1)$ such that $(u,v) \in H_c(x_c,x_2)$. Then
\ref{e:th:BehavioralInclusion:proof:composable} holds, and so
$F_c(x_c,v) = \emptyset$ as we have already shown.
Hence, $C \circ Q$ is f.c.~with $S_1$, which completes
the proof of \ref{th:BehavioralInclusionSuff:item:feddback-composable}.

To prove \ref{th:BehavioralInclusionSuff:item:QcircS1xC},
let $(u,x_2) \in \mathcal{B}(C \times (Q \circ S_1))$ be defined on
$\intco{0;T}$, $T \in \mathbb{N} \cup \{ \infty \}$. Then there exist
maps $x_c$, $x_1$ and $v$ such that $(0,(v,u),(x_c,x_1),(u,x_2))$ is a
solution of $C \times (Q \circ S_1)$ on $\intco{0;T}$. Moreover, if
additionally $T < \infty$, then
we also have
\begin{equation}
\label{e:th:BehavioralInclusion:proof:blocking2}
F_c( x_c(T-1), v(T-1) ) = \emptyset
\vee
F_1(x_1(T-1),u(T-1)) = \emptyset.
\end{equation}
By Proposition \ref{prop:ClosedLoop:0},
$(u,u,x_1,x_2)$ is a solution of $Q \circ S_1$ on $\intco{0;T}$, and
$(x_2,v,x_c,u)$ is a solution of $C$ on $\intco{0;T}$. The former fact
implies the following:
\begin{align}
\label{e:th:BehavioralInclusion:proof:b}
\forall_{t \in \intco{0;T}}\;
& x_2(t) \in Q( x_1(t) ),\\
\label{e:th:BehavioralInclusion:proof:c}
\forall_{t \in \intco{0;T-1}}\;
& x_1(t+1) \in F_1(x_1(t), u(t)).
\end{align}
We claim that $(u,u,x_2,x_2)$ is a solution of $S_2$, so that
$(0,(v,u),(x_c,x_2),(u,x_2))$ is a solution of $C \times S_2$ by
Proposition \ref{prop:ClosedLoop:0}.
First, we observe that
$F_2(x_2(t),u(t)) \not= \emptyset$ for every $t \in \intco{0;T-1}$.
Indeed, $(u(t),v(t)) \in H_c(x_c(t),x_2(t))$ for every such $t$ since
$(x_2,v,x_c,u)$ is a solution of $C$ on $\intco{0;T}$.
Hence,
$F_2(x_2(t),u(t)) = \emptyset$ for some $t \in \intco{0;T-1}$
implies $F_c(x_c(t),v(t)) = \emptyset$ as $C$ is f.c.~with $S_2$.
This is a contradiction as $x_c(t+1) \in F_c(x_c(t),v(t))$, so
$F_2(x_2(t),u(t)) \not= \emptyset$ for every $t \in \intco{0;T-1}$.
Consequently, $u(t) \in U_{S_2}(x_2(t))$ for all $t \in \intco{0;T-1}$, so
\ref{e:th:BehavioralInclusion:proof:b},
\ref{e:th:BehavioralInclusion:proof:c} and the requirement
\ref{def:FeedbackRefinementRelation:dyn} in Definition
\ref{def:FeedbackRefinementRelation} imply that
$x_2(t+1) \in F_2(x_2(t),u(t))$ for all $t \in \intco{0;T-1}$. This
shows that $(0,(v,u),(x_c,x_2),(u,x_2))$ is a solution of $C \times S_2$
on $\intco{0;T}$.

Finally, we see that if $T < \infty$ and $u(T-1) \in U_{S_2}(x_2(T-1))$,
then \ref{e:th:BehavioralInclusion:proof:b} and the requirement
\ref{def:FeedbackRefinementRelation:in} in Definition
\ref{def:FeedbackRefinementRelation} together imply
$F_1(x_1(T-1),u(T-1)) \not= \emptyset$, and in turn,
\ref{e:th:BehavioralInclusion:proof:blocking2} shows that
$F_c(x_c(T-1),v(T-1)) = \emptyset$.
Thus, $(u,x_2) \in \mathcal{B}(C \times S_2)$, which proves
\ref{th:BehavioralInclusionSuff:item:QcircS1xC}.

To prove \ref{th:BehavioralInclusionSuff:item:S1xCcircQ},
let $(u,x_1) \in \mathcal{B}((C \circ Q) \times S_1)$ be
defined on $\intco{0;T}$, $T \in \mathbb{N} \cup \{ \infty \}$. Then
there exist maps $x_c$ and $v$ such that $(0,(v,u),(x_c,x_1),(u,x_1))$
is a solution of $(C \circ Q) \times S_1$ on $\intco{0;T}$. Moreover,
if additionally $T < \infty$, then
we also have
\begin{equation}
\label{e:th:BehavioralInclusion:proof:blocking1}
F_c(x_c(T-1), v(T-1)) = \emptyset
\vee
F_1(x_1(T-1),u(T-1)) = \emptyset.
\end{equation}
By Proposition \ref{prop:ClosedLoop:0},
$(u,u,x_1,x_1)$ and $(x_1,v,x_c,u)$ is a solution of $S_1$ and
$C \circ Q$, respectively. In particular, by the definition of $H_c'$,
there exists a map $x_2 \colon \intco{0;T} \to X_2$ such that
$x_2(t) \in Q(x_1(t))$ and $(u(t),v(t)) \in H_c(x_c(t),x_2(t))$ for
all $t \in \intco{0;T}$.
Then $(x_2,v,x_c,u)$ and $(u,u,x_1,x_2)$ is a solution of $C$ and
$Q \circ S_1$, respectively, so
$(0,(v,u),(x_c,x_1),(u,x_2))$ is a solution of $C \times (Q \circ S_1)$ by
Proposition \ref{prop:ClosedLoop:0}.
We next observe that if $T < \infty$ and
$F_1(x_1(T-1),u(T-1)) \not= \emptyset$, then
\ref{e:th:BehavioralInclusion:proof:blocking1} implies
$F_c(x_c(T-1),v(T-1)) = \emptyset$. This shows that
$(u,x_2) \in \mathcal{B}(C \times (Q \circ S_1))$, and so
\ref{th:BehavioralInclusionSuff:item:S1xCcircQ} follows from
\ref{th:BehavioralInclusionSuff:item:QcircS1xC}.
\end{proof}

Next we show, that feedback
refinement relations are not only sufficient, but indeed necessary
for the controller refinement as considered in this paper.

\begin{theorem}
\label{th:BehavioralInclusionNecess}
Let $S_1$ and $S_2$ be simple systems of the
form~\ref{e:systemMooreId}, and let $Q \subseteq X_1\times X_2$ be a
strict relation.
If for every system $C$ that
is feedback composable with $S_2$ follows that 
$C$ is feedback composable with $Q\circ S_1$ and
$\mathcal{B}(C \times (Q \circ S_1))
\subseteq
\mathcal{B}(C \times S_2)$
holds,
then $Q$ is a feedback refinement relation from $S_1$ to $S_2$.
\end{theorem}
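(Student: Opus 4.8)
The plan is to prove the contrapositive at the level of individual conditions: if $Q$ fails to be a feedback refinement relation from $S_1$ to $S_2$, then I can exhibit a witness system $C$ that is feedback composable with $S_2$ but for which the desired conclusion (feedback composability with $Q\circ S_1$ together with behavioral inclusion) fails. There are essentially three ways the hypothesis ``$Q$ is a feedback refinement relation'' can be violated: (i) $U_2\not\subseteq U_1$; (ii) condition \ref{def:FeedbackRefinementRelation:in} fails, i.e.\ there is $(x_1,x_2)\in Q$ and $u\in U_{S_2}(x_2)\setminus U_{S_1}(x_1)$; (iii) condition \ref{def:FeedbackRefinementRelation:dyn} fails, i.e.\ there is $(x_1,x_2)\in Q$, $u\in U_{S_2}(x_2)$, $x_1'\in F_1(x_1,u)$ and $x_2'\in Q(x_1')$ with $x_2'\notin F_2(x_2,u)$. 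For each of these I would design a small static (or nearly static) Moore controller $C$ that singles out the offending state $x_2$ and input $u$.

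First I would handle the inclusion $U_2\subseteq U_1$: if it fails, pick $u^\ast\in U_2\setminus U_1$ and take $C$ to be a static Moore system whose output alphabet is a subset of $X_2$, that enables $u^\ast$ at the relevant abstract state; since $u^\ast\notin U_1$, the serial composition $C\circ Q$ is not even well-typed as required for feedback composability with $S_1$, and more to the point $C\times(Q\circ S_1)$ cannot produce behavior matching $C\times S_2$. Next, for case (ii), I would take $C$ static with a single control output $u$ enabled only when it ``sees'' a state in $Q^{-1}$-relation to $x_2$ — concretely $H_c$ chosen so that $u$ is issued at $x_2$ — and verify using Condition \ref{e:def:ClosedLoop:composable} that $C$ is feedback composable with $S_2$ (here I use that $F_2(x_2,u)\neq\emptyset$, i.e.\ $u\in U_{S_2}(x_2)$). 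Then, since $F_1(x_1,u)=\emptyset$, the closed loop $C\times(Q\circ S_1)$ either blocks at time $0$ at $(x_1,\cdot)$ while $C\times S_2$ continues, or — depending on how blocking interacts with \ref{e:def:ClosedLoop:composable} — $C$ is not feedback composable with $Q\circ S_1$ at all; either way the hypothesis is contradicted, so \ref{def:FeedbackRefinementRelation:in} must hold.

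For case (iii), which I expect to be the crux, I would build $C$ so that it drives $S_2$ from $x_2$ with input $u$ and then, at the successor, issues an input that is admissible at $x_2'$ but \emph{not} at the other possible $Q$-successors of $F_1(x_1,u)$, or more simply: $C$ first outputs $u$, and at the next step demands to observe exactly the state $x_2'$ (via a suitable $H_c$) and blocks otherwise. Feeding the plant $S_1$ through $Q$: starting from $x_1$ with input $u$, the plant can move to $x_1'$, and $Q\circ S_1$ then reports some element of $Q(x_1')$; by hypothesis $x_2'\in Q(x_1')$ is a legitimate observation, so the closed loop $C\times(Q\circ S_1)$ has a solution that at time $1$ sees $x_2'$ and continues. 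But $(u,\text{observed trace})$ obtained this way cannot lie in $\mathcal{B}(C\times S_2)$, because in $S_2$ the input $u$ applied at $x_2$ never leads to $x_2'$ (as $x_2'\notin F_2(x_2,u)$), so $C\times S_2$ has no solution with that external behavior — contradicting the inclusion. The careful points here are: (a) ensuring the chosen $C$ is genuinely Moore and that Condition \ref{e:def:ClosedLoop:composable} holds so that $C$ is feedback composable with $S_2$ (this is where one uses $F_1(x_1',u')\neq\emptyset$ or a blocking argument analogous to the proof of Theorem \ref{th:BehavioralInclusionSuff}); (b) translating the existence/nonexistence of solutions into statements about $\mathcal{B}(\cdot)$ via Definition \ref{def:SystemBehavior} and Proposition \ref{prop:ClosedLoop:0}; and (c) making the construction uniform enough that a single ``probe'' controller works, rather than ad hoc constructions per case. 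The main obstacle will be getting the composability condition \ref{e:def:ClosedLoop:composable} to hold for the probe controllers without inadvertently making them fail to be feedback composable with $S_2$, which would make the hypothesis vacuous rather than contradicted; I would resolve this by letting $C$ be large enough (e.g.\ with a ``sink'' state) to always have a continuing transition whenever $S_2$ does.
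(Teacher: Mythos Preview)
Your plan is essentially the same as the paper's proof: establish $U_2\subseteq U_1$ and the two conditions of Definition \ref{def:FeedbackRefinementRelation} by constructing, for each, a probe controller $C$ that is feedback composable with $S_2$ and then deriving a contradiction either with feedback composability with $Q\circ S_1$ (via condition \ref{e:def:ClosedLoop:composable}) or with the behavioral inclusion. The paper argues directly rather than by contrapositive, but the witness controllers and the logical flow match yours.

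Two small clean-ups. For case (i) you mention $C\circ Q$ being ill-typed, but the theorem's hypothesis concerns $C$ being feedback composable with $Q\circ S_1$; the correct observation is that the latter forces $Y_c\subseteq U_1$, so a controller with $Y_c=U_2$ already yields $U_2\subseteq U_1$. For case (ii), your ``either/or'' collapses to the second alternative: if $F_1(x_1,u)=\emptyset$ while your probe has $F_c(0,x_2)\neq\emptyset$, then \ref{e:def:ClosedLoop:composable} fails outright, so $C$ is \emph{not} feedback composable with $Q\circ S_1$ --- the behavioral-inclusion route never gets started here. The paper exploits exactly this. For case (iii) the paper uses a single universal controller (outputting all of $U_{S_2}(x_2)$ at $x_2$ and blocking only where $U_{S_2}$ is empty) rather than one tailored to $x_2'$, which is what you anticipate in your point (c); either construction works.
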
 
\begin{proof}\label{pageref3:review:item31}
In the proof we consider systems $Q \circ S_1$ of the
form~\ref{e:th:BehavioralInclusionSuff:proof:QcircS1}.
Let $C$ be given by
$(\{0\},\{0\},X_2,\{0\},U_2,F_c,H_c)$ with $F_c(0,0) = \emptyset$ and $H_c$
being strict.
Then, $C$ is feedback composable (f.c.) with
$S_2$, and in turn, $C$ is f.c.~with $Q \circ S_1$ by
our hypothesis.
This implies
$U_2 \subseteq U_1$ as required in
Def.~\ref{def:FeedbackRefinementRelation}.

To prove that $Q$ satisfies the condition
\ref{def:FeedbackRefinementRelation:in} in
Def.~\ref{def:FeedbackRefinementRelation}, we let $(x_1,x_2) \in Q$
and $u \in U_{S_2}(x_2)$ and
show that 
$F_1(x_1,u) \not= \emptyset$. Let $C$ be given by 
$(\{0\},\{0\},X_2,X_2,U_2,F_c,H_c)$ with 
$H_c(0,x'_2) = \{ (u,x'_2) \}$ for all $x'_2 \in X_2$ and 
$F_c(0,x_2) = \{0\}$ and 
$F_c(0,x'_2) = \emptyset$ for 
$x'_2 \in X_2 \setminus \{ x_2 \}$. Then $C$ is f.c.~with $S_2$. In particular, the condition
\ref{e:def:ClosedLoop:composable} in Definition \ref{def:ClosedLoop}
reduces to $F_2(x_2,u) \not= \emptyset$. Then $C$ is also f.c.~with $Q \circ S_1$ by our hypothesis, and here the condition
\ref{e:def:ClosedLoop:composable} implies
$F_1(x_1,u) \not= \emptyset$ and the claim follows.

To prove that $Q$ satisfies the condition
\ref{def:FeedbackRefinementRelation:dyn} in Definition~\ref{def:FeedbackRefinementRelation},
we choose $C$ by $(\{0\},\{0\},X_2,X_2,U_2,F_c, H_c)$  with $H_c$ and $F_c$
defined by: if $U_{S_2}(x_2)=\emptyset$ we set $H_c(0,x_2)=U_2 \times \{x_2\}$ and  
$F_c(0,x_2)= \emptyset$; otherwise $H_c(0,x_2)=U_{S_2}(x_2) \times \{x_2\}$ and $F_c(0,x_2)=\{0\}$.
With this definition of $C$ 
condition~\ref{e:def:ClosedLoop:composable}  holds and $C$ is
f.c.~with $S_2$, and by our hypothesis, $C$ is also f.c.~with $Q\circ S_1$.
Suppose that 
condition \ref{def:FeedbackRefinementRelation:dyn} does not hold,
then
there exist
$(x_1,x_2) \in Q$, $u \in U_{S_2}(x_2)$, $x'_1\in F_1(x_1,u)$ and
$x_2' \in Q(x_1')$ such that $x_2' \not \in F_2(x_2,u)$. 
Let $\bar x_1=x_1x_1'$ and $\bar u=uu'$ with $(u',x_2') \in H_c(0,x_2')$. Then
$(\bar u,\bar u,\bar x_1,\bar x_1)$ is a solution of $S_1$ on 
$\intco{0;2}$.
Define $\bar x_2=x_2x_2'$ and observe that 
$(\bar u,\bar u,\bar x_1,\bar x_2)$ is a solution of $Q\circ S_1$.
Let $\bar x_c=00$, since $F_2(x_2,u)\neq \emptyset$, we see that
$(u,x_2) \in H_c(0,x_2)$ and $\{0\}=F_c(0,x_2)$. Also
$(u',x_2') \in H_c(0,x_2')$ by 
our choice of $u'$
and thus $(\bar x_2,\bar x_2,\bar x_c,\bar u)$ is a
solution of $C$. Hence by
Proposition~\ref{prop:ClosedLoop:0} we see that 
$(0,(\bar x_2,\bar u),(\bar x_c,\bar x_1),(\bar u,\bar x_2))$ is a solution of $C\times (Q\circ S_1)$.
Consider $(\hat u,\hat x_2)\in \mathcal{B}(C\times (Q\circ
S_1))$ with $\hat u|_{\intco{0;2}}=\bar u$ and $\hat
x_2|_{\intco{0;2}}=\bar x_2$.
Since $\bar x_2(1)\notin F_2(\bar x_2(0),\bar u(0))$ the sequence
$(0,(\bar x_2, \bar u),(\bar x_c,\bar x_2),(\bar u,\bar x_2))$ cannot be a solution of $C\times S_2$,
and so $(\hat u,\hat x_2) \notin \mathcal{B}(C \times S_2)$.
This is a contradiction, which establishes condition
\ref{def:FeedbackRefinementRelation:dyn} in Definition~\ref{def:FeedbackRefinementRelation}.
\end{proof}

\section{Symbolic Controller Synthesis}
\label{s:SymbolicSynthesis}

In this section, we propose a controller synthesis technique based on
the concept of feedback refinement relations which resolves the state
information and refinement complexity issues as explained and
illustrated in Sections \ref{s:intro} and \ref{s:pitfalls}, applies
to general specifications, and produces controllers that are robust
with respect to various disturbances.
We follow the general three step procedure of
abstraction-based synthesis outlined in Section \ref{s:intro}, where
we focus on the first and third steps. Our results will be
complemented by the computational method presented in Section
\ref{s:Construction}, whereas the solution of the abstract control
problem -- the second step of the general procedure -- is beyond the
scope of the present paper. Indeed, large classes of these problems
can be solved efficiently using standard algorithms,
e.g.~\cite{EmersonClarke82,PnueliRosner89,Vardi95,Tabuada09,BloemJobstmanPitermanPnueliSaar12,i13absocc}.
\looseness-1

\subsection{Solution of control problems}
\label{ss:SolutionOfControlProblems}

We begin with the definition of the synthesis problem.

\begin{definition}
\label{d:specification}
Let $S$ denote the system \ref{e:def:System}.
Given a set $Z$, any subset $\Sigma \subseteq Z^{\infty}$ is called a
\begriff{specification on $Z$}.
A system $S$ is said to \begriff{satisfy} a specification $\Sigma$
on $U \times Y$ if $\mathcal{B}(S) \subseteq \Sigma$.
Given a specification $\Sigma$ on $U \times Y$, the system
$C$ \begriff{solves the control problem $(S,\Sigma)$} if $C$ is
feedback composable with $S$ and the closed loop $C \times S$
satisfies $\Sigma$.
\end{definition}

It is clear that we can use linear temporal logic (LTL) to define a specification for a given
system $S$. Indeed, suppose that we are given a finite set $\mathcal{P}$ of
atomic propositions, a labeling function
$L \colon U\times Y\rightrightarrows \mathcal{P}$ and an LTL formula $\varphi$ defined
over $\mathcal{P}$, see
e.g.~\cite[Chapter~5]{BaierKatoen08}. Then we can formulate the
control problem $(S,\Sigma)$ to enforce the formula $\varphi$ on $S$
using the specification
\begin{IEEEeqnarray*}{c}
\Sigma=\{(u,y)\in (U\times Y)^{\mathbb{Z}_{+}}\mid L\circ (u,y)\text{
  satisfies } \varphi\}.
\end{IEEEeqnarray*}
Our notion of specification is not limited to LTL,
e.g. ``$y(t)=1$ holds for all even $t\in \mathbb{Z}_+$'' is not expressible in
LTL~\cite[Remark 5.43]{BaierKatoen08}, but is a valid specification in
our framework.

We are now going to solve control problems using
Theorem \ref{th:BehavioralInclusionSuff}. 
As we have already discussed, the concrete control problem
$(S_1,\Sigma_1)$ will not be solved directly. Instead, we will
consider an auxiliary problem for the abstraction (``abstract control
problem''), whose solution will induce a solution of the concrete problem.

\begin{definition}
\label{def:AbstractSpecification}
Let $S_1$ and $S_2$ be simple systems of the
form~\ref{e:systemMooreId},
let $\Sigma_1$ be a specification on $U_1 \times X_1$, and let
$Q \subseteq X_1 \times X_2$ be a strict relation.
A specification $\Sigma_2$ on $U_2 \times X_2$ is called an
\begriff{abstract specification} associated with $S_1$, $S_2$, $Q$ and
$\Sigma_1$, if the following condition holds.\\
If $(u,x_2) \in \Sigma_2$, where $x_2$ and $u$ are defined on
$\intco{0;T}$ for some $T \in \mathbb{N} \cup \{\infty\}$, and if
$x_1 \colon \intco{0;T} \to X_1$ satisfies
$(x_1(t), x_2(t)) \in Q$ for all $t \in \intco{0;T}$, then
$(u,x_1) \in \Sigma_1$.
\end{definition}

For the sake of simplicity, we write
$(S_1,\Sigma_1) \preccurlyeq_Q (S_2,\Sigma_2)$
whenever $S_1 \preccurlyeq_Q S_2$ and $\Sigma_2$ is an abstract
specification associated with $S_1$, $S_2$, $Q$ and $\Sigma_1$.
The result presented below shows how to use a solution of the abstract
control problem to arrive at a solution of the concrete control
problem, resulting in the closed loop in \ref{f:ClosedLoop_FRR}.

\begin{theorem}
\label{th:refinement}
If $(S_1,\Sigma_1) \preccurlyeq_Q (S_2,\Sigma_2)$ and
the abstract controller $C$ solves the
control problem
$(S_2,\Sigma_2)$, then the refined controller $C \circ Q$ solves the
control problem $(S_1,\Sigma_1)$.
\end{theorem}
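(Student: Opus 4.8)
The plan is to chain together the two structural results already established: Theorem~\ref{th:BehavioralInclusionSuff} and the definition of abstract specification. First I would unpack the hypothesis $(S_1,\Sigma_1)\preccurlyeq_Q(S_2,\Sigma_2)$ into its two components, namely $S_1\preccurlyeq_Q S_2$ and the fact that $\Sigma_2$ is an abstract specification associated with $S_1$, $S_2$, $Q$ and $\Sigma_1$. Since $C$ solves $(S_2,\Sigma_2)$, by Definition~\ref{d:specification} the controller $C$ is feedback composable with $S_2$. Hence Theorem~\ref{th:BehavioralInclusionSuff}\ref{th:BehavioralInclusionSuff:item:feddback-composable} applies and yields immediately that $C\circ Q$ is feedback composable with $S_1$; this discharges the first requirement of ``solves the control problem $(S_1,\Sigma_1)$''.

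It remains to show that the closed loop $(C\circ Q)\times S_1$ satisfies $\Sigma_1$, i.e.\ $\mathcal{B}((C\circ Q)\times S_1)\subseteq\Sigma_1$. I would take an arbitrary $(u,x_1)\in\mathcal{B}((C\circ Q)\times S_1)$, say defined on $\intco{0;T}$. By Theorem~\ref{th:BehavioralInclusionSuff}\ref{th:BehavioralInclusionSuff:item:S1xCcircQ} there exists a map $x_2$ with $(u,x_2)\in\mathcal{B}(C\times S_2)$ and $(x_1(t),x_2(t))\in Q$ for all $t$ in the domain of $x_1$ (which is also the domain of $x_2$, since that part of the theorem produces $x_2$ with the same domain). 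Because $C$ solves $(S_2,\Sigma_2)$, we have $\mathcal{B}(C\times S_2)\subseteq\Sigma_2$, so $(u,x_2)\in\Sigma_2$. Now invoke the defining property of the abstract specification $\Sigma_2$ in Definition~\ref{def:AbstractSpecification}: since $(u,x_2)\in\Sigma_2$ and $x_1$ satisfies $(x_1(t),x_2(t))\in Q$ for all $t\in\intco{0;T}$, it follows that $(u,x_1)\in\Sigma_1$. As $(u,x_1)$ was arbitrary, $\mathcal{B}((C\circ Q)\times S_1)\subseteq\Sigma_1$, and therefore $C\circ Q$ solves $(S_1,\Sigma_1)$.

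There is essentially no hard step here — the theorem is a clean corollary of the machinery built in Section~\ref{s:FeedbackRefinementRelations}. The only point requiring a little care is the bookkeeping about domains: the abstract specification condition is phrased for signals $u,x_2$ defined on a common interval $\intco{0;T}$ together with a state signal $x_1$ on the \emph{same} interval, so one must make sure that the $x_2$ furnished by Theorem~\ref{th:BehavioralInclusionSuff}\ref{th:BehavioralInclusionSuff:item:S1xCcircQ} is indeed defined exactly on the domain of $x_1$ and that the membership $(x_1(t),x_2(t))\in Q$ holds on all of it — both of which are explicitly asserted by that part of the theorem. One should also note that the input signal appearing in the behavior of $(C\circ Q)\times S_1$ is the same $u$ that appears in the behavior of $C\times S_2$, which is exactly what Theorem~\ref{th:BehavioralInclusionSuff}\ref{th:BehavioralInclusionSuff:item:S1xCcircQ} guarantees; so matching up the external signals across the two closed loops is automatic rather than something that needs to be re-derived.
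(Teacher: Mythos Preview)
Your proposal is correct and follows essentially the same argument as the paper: feedback composability via Theorem~\ref{th:BehavioralInclusionSuff}\ref{th:BehavioralInclusionSuff:item:feddback-composable}, then behavioral inclusion via part~\ref{th:BehavioralInclusionSuff:item:S1xCcircQ}, and finally the definition of abstract specification to transfer membership from $\Sigma_2$ to $\Sigma_1$. Your additional remarks on domain bookkeeping are sound and merely make explicit what the paper leaves implicit.
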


\begin{proof}
As $C$ solves $(S_2,\Sigma_2)$, $C$ is feedback composable with $S_2$,
and hence, $C \circ Q$ is feedback composable with $S_1$ by Theorem
\ref{th:BehavioralInclusionSuff}.

\noindent
It remains to show that
$\mathcal{B}((C \circ Q) \times S_1) \subseteq \Sigma_1$. So, let
$(u,x_1) \in \mathcal{B}((C \circ Q) \times S_1)$ be arbitrary and
invoke Theorem \ref{th:BehavioralInclusionSuff} again to see that there
exists a map $x_2$ such that
$(u,x_2) \in \mathcal{B}(C \times S_2)$ and
$(x_1(t), x_2(t)) \in Q$ for all $t$ in the domain of $x_2$.
Then $(u,x_2) \in \Sigma_2$ since $C$ solves $(S_2,\Sigma_2)$, and
the definition of the abstract specification $\Sigma_2$ shows that
$(u,x_1) \in \Sigma_1$.
\end{proof}

\subsection{Uncertainties and disturbances}
\label{ss:Uncertainties}

We next show that it is an easy task in our framework
to synthesize controllers that are robust with respect to
various disturbances including
plant uncertainties, input disturbances and
measurement errors. In particular, we demonstrate that the
synthesis of a robust controller can
be reduced to the solution of an auxiliary, unperturbed  control
problem.

Let us consider the closed loop
illustrated in~\ref{fig:PerturbedClosedLoop} consisting of a plant
given by a
simple system $S_1$ of the form \ref{e:systemMooreId},
the \begriff{perturbation maps} $P_i$, given by
strict set-valued  maps with non-empty domains
\begin{IEEEeqnarray}{c}\label{e:PerturbationMaps}
\begin{IEEEeqnarraybox}[][c]{l'l}
P_1:\hat U_1\rightrightarrows U_1,& P_2:X_1\rightrightarrows \hat X_1,\\
P_3:\hat U_1\rightrightarrows Y_1,&P_4:X_1\rightrightarrows Y_2,
\end{IEEEeqnarraybox}
\end{IEEEeqnarray}
and a strict quantizer
\begin{equation}
\label{e:PerturbationQuantizer}
Q \colon \hat X_1\rightrightarrows X_2.
\end{equation}
We seek to synthesize a controller given as a system
\begin{equation}
\label{e:PerturbationController}
C
=
( X_c, X_{c,0}, X_2, V_c, \hat U_1, F_c, H_c ),
\end{equation}
to robustly enforce a given specification
$\Sigma_1$ on $Y_1 \times Y_2$.

The \begriff{behavior} of the
closed loop in~\ref{fig:PerturbedClosedLoop} is defined as the set of
all sequences $(y_1,y_2)\in (Y_1\times Y_2)^{\intco{0;T}}$,
$T \in \mathbb{N} \cup \{ \infty \}$, for which there exist a solution
$(u,u,x,x)$ of $S_1$ on $\intco{0;T}$ and a solution
$(u_c,v_c,x_c,y_c)$ of $C$ on $\intco{0;T}$ that
satisfy the following two conditions:
\begin{enumerate}
\item
\label{ii:BehaviorPerturbed}
For all $t \in \intco{0;T}$ we have
\begin{IEEEeqnarray}{c}\label{eq:ii:BehaviorPerturbed}
\begin{IEEEeqnarraybox}[][c]{l'l}
u(t) \in P_1(y_c(t)),& u_c(t) \in Q(P_2(x(t))), \\
y_1(t) \in P_3(y_c(t)), & y_2(t) \in P_4(x(t)).
\end{IEEEeqnarraybox}
\end{IEEEeqnarray}
\item
\label{iii:BehaviorPerturbed}
If $T < \infty$, then
\begin{IEEEeqnarray}{c}\label{eq:iii:BehaviorPerturbed}
\begin{IEEEeqnarraybox}[][c]{l,s}
F_1(x(T-1),u(T-1)) = \emptyset,& or\\
F_c(x_c(T-1),v_c(T-1)) = \emptyset.
\end{IEEEeqnarraybox}
\end{IEEEeqnarray}
\end{enumerate}

\begin{figure}[ht]
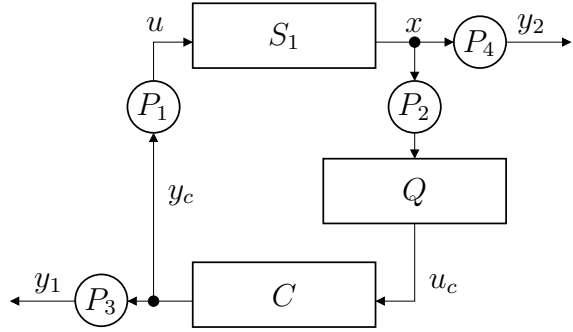

\begin{center}
\psfrag{S}[][]{$S_1$}
\psfrag{Q}[][]{$Q$}
\psfrag{C}[][]{$C$}
\psfrag{P1}[][]{$P_1$}
\psfrag{P2}[][]{$P_2$}
\psfrag{P3}[][]{$P_3$}
\psfrag{P4}[][]{$P_4$}
\psfrag{y1}[][]{$y_1$}
\psfrag{y2}[][]{$y_2$}
\psfrag{xs}[][]{$x$}
\psfrag{us}[r][r]{$u$}
\psfrag{uc}[l][l]{$u_c$}
\psfrag{yc}[l][l]{$y_c$}
\psfrag{z}[l][l]{}
\pgfdeclareimage[height=\ifCLASSOPTIONonecolumn.24\else.43\fi\linewidth]{PerturbedClosedLoop}{figures/ClosedLoop_Perturbed}%
\noindent
\pgfuseimage{PerturbedClosedLoop}
\end{center}
\caption{\label{fig:PerturbedClosedLoop}
Various perturbations in the closed loop.
}
\end{figure}
It is straightforward to
observe, that the perturbations maps $P_1$ and $P_2$ may be used to
model input disturbances and measurement errors, respectively.
We assume that the uncertainties of the dynamics of $S_1$ have
already been modeled by the set-valued transition function $F_1$.
The controller $C$ and the quantizer $Q$, which will usually be discrete,
are not subject to any additional perturbations either.
The maps
$P_3$ and $P_4$ are useful in the presence of output disturbances.
For example, the plant $S_1$ might
represent a sampled variant of a continuous-time control system and
the specification of the desired behavior is naturally formulated in
continuous time, rather than in discrete time. In that context, one can use $P_3$ and $P_4$ to
``robustify'' the specification like 
in~\cite{FainekosGirardKressGazitPappas09} such that properties of the
sampled behavior carry over to the continuous-time behavior.
\looseness-1

Given some specifications $\Sigma_1$ on $Y_1\times Y_2$ and $\hat \Sigma_1$ on
\mbox{$\hat U_1\times X_1$}, we call $\hat \Sigma_1$ a \begriff{robust specification of
$\Sigma_1$ w.r.t. $P_3$ and $P_4$} if for the functions $(y_c,x,y_1,y_2)\in
(\hat U_1\times X_1\times Y_1\times Y_2)^{\intco{0;T}}$, $T \in \mathbb{N} \cup \{\infty\}$,
we have that
\begin{IEEEeqnarray*}{c,t,c,c,c}
(y_c,x)\in \hat\Sigma_1& and &
\forall_{t\in \intco{0;T}}&y_1(t)\in P_3(y_c(t)), & y_2(t)\in P_4(x(t))
\end{IEEEeqnarray*}
implies $(y_1,y_2)\in \Sigma_1$.

In the following result, we present sufficient conditions for a controller $C$ to
robustly enforce a given specification $\Sigma_1$ on the
perturbed closed loop illustrated in \ref{fig:PerturbedClosedLoop}, in
terms of the auxiliary
simple system $\hat S_1$,
\begin{IEEEeqnarray}{l}\label{e:pert:sys:aux}
\begin{IEEEeqnarraybox}[][c]{l}
\hat S_1 = ( X_1, X_1, \hat U_1, \hat U_1, X_1, \hat F_1, \id ),\\
\hat F_1( x, u ) = F_1( x, P_1(u) ),
\end{IEEEeqnarraybox}
\end{IEEEeqnarray}
together with a robust specification $\hat \Sigma_1$ of $\Sigma_1$.
We show in the subsequent corollary, which follows immediately by Theorem~\ref{th:refinement}, how to use an abstraction
$(S_2,\Sigma_2)$ to synthesize such a controller $C$.

\begin{theorem}
\label{th:PerturbedRefinement}
Consider a simple system $S_1$,
perturbation maps $P_i$, $i\in\intcc{1;4}$,
a strict quantizer $Q$, and a controller $C$
as illustrated in~\ref{fig:PerturbedClosedLoop} and respectively
defined in~\ref{e:systemMooreId}, \ref{e:PerturbationMaps},
\ref{e:PerturbationQuantizer} and \ref{e:PerturbationController},
and assume that $F_1$ is strict.
Let $\Sigma_1$ be a specification on $Y_1\times Y_2$. Let $(\hat
S_1,\hat \Sigma_1)$ be an auxiliary control problem,
 where $\hat S_1$ follows from
$S_1$ according to~\ref{e:pert:sys:aux} and $\hat \Sigma_1$ is a
robust specification of $\Sigma_1$ w.r.t. $P_3$ and~$P_4$.

If
$C \circ \hat Q$, with $\hat Q = Q \circ P_2$, solves the
control problem $(\hat S_1, \hat \Sigma_1)$, then the behavior
of the perturbed closed loop in \ref{fig:PerturbedClosedLoop} is a
subset of $\Sigma_1$.
\end{theorem}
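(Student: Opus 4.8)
The plan is to show that the behavior of the perturbed closed loop is recovered, component by component, from the behavior of the auxiliary closed loop $(C \circ \hat Q) \times \hat S_1$, and then to appeal to the hypothesis $\mathcal{B}((C \circ \hat Q) \times \hat S_1) \subseteq \hat\Sigma_1$ together with the defining property of a robust specification. Concretely, I would take an arbitrary element $(y_1,y_2)$ of the behavior of the perturbed closed loop, defined on $\intco{0;T}$ for some $T \in \mathbb{N} \cup \{\infty\}$, and unfold the definition to obtain a solution $(u,u,x,x)$ of $S_1$ on $\intco{0;T}$, a solution $(u_c,v_c,x_c,y_c)$ of $C$ on $\intco{0;T}$, the relations in \ref{eq:ii:BehaviorPerturbed} for every $t \in \intco{0;T}$, and, if $T < \infty$, the alternative in \ref{eq:iii:BehaviorPerturbed}.

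The key preliminary observation is that, since $F_1$ is strict, $F_1(x(T-1),u(T-1)) \neq \emptyset$; hence the first alternative in \ref{eq:iii:BehaviorPerturbed} can never occur, so whenever $T < \infty$ we must have $F_c(x_c(T-1),v_c(T-1)) = \emptyset$. I expect this to be the one point requiring care: the non-continuation condition of the perturbed loop is phrased through $F_1$, whereas the auxiliary loop only sees $\hat F_1(x,u) = F_1(x,P_1(u))$, and $F_1(x(T-1),u(T-1)) = \emptyset$ alone does not force $\hat F_1(x(T-1),y_c(T-1)) = \emptyset$; the strictness hypothesis on $F_1$ is precisely what circumvents this.

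Next I would assemble a solution of $(C \circ \hat Q) \times \hat S_1$. From $u(t) \in P_1(y_c(t))$ and $x(t+1) \in F_1(x(t),u(t)) \subseteq F_1(x(t),P_1(y_c(t))) = \hat F_1(x(t),y_c(t))$, together with $x(0) \in X_1$, the quadruple $(y_c,y_c,x,x)$ is a solution of $\hat S_1$ on $\intco{0;T}$. From $(y_c(t),v_c(t)) \in H_c(x_c(t),u_c(t))$ and $u_c(t) \in Q(P_2(x(t))) = \hat Q(x(t))$ we get $(y_c(t),v_c(t)) \in H_c(x_c(t),\hat Q(x(t)))$, which together with $x_c(0) \in X_{c,0}$ and $x_c(t+1) \in F_c(x_c(t),v_c(t))$ makes $(x,v_c,x_c,y_c)$ a solution of $C \circ \hat Q$ on $\intco{0;T}$. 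Since $C \circ \hat Q$ is feedback composable with $\hat S_1$ by hypothesis, Proposition \ref{prop:ClosedLoop:0} yields that $(0,(v_c,y_c),(x_c,x),(y_c,x))$ is a solution of $(C \circ \hat Q) \times \hat S_1$ on $\intco{0;T}$; and if $T < \infty$, the preliminary observation shows that the transition function of this closed loop vanishes at $T-1$. Hence $(y_c,x) \in \mathcal{B}((C \circ \hat Q) \times \hat S_1)$, so $(y_c,x) \in \hat\Sigma_1$ by hypothesis.

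Finally, since $y_1(t) \in P_3(y_c(t))$ and $y_2(t) \in P_4(x(t))$ for all $t \in \intco{0;T}$ by \ref{eq:ii:BehaviorPerturbed}, and $(y_c,x) \in \hat\Sigma_1$, the fact that $\hat\Sigma_1$ is a robust specification of $\Sigma_1$ with respect to $P_3$ and $P_4$ gives $(y_1,y_2) \in \Sigma_1$. As $(y_1,y_2)$ was an arbitrary element of the behavior of the perturbed closed loop, the claim follows.
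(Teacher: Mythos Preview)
Your proof is correct and follows essentially the same approach as the paper's own proof. The paper compresses the argument into the claim that $(y_1,y_2)$ lies in the perturbed closed-loop behavior iff there exists $(y_c,x) \in \mathcal{B}((C \circ \hat Q) \times \hat S_1)$ with $y_1(t) \in P_3(y_c(t))$ and $y_2(t) \in P_4(x(t))$, and then invokes the robust specification; you spell out in full the one direction actually needed, and you correctly isolate the role of the strictness of $F_1$ in forcing the blocking alternative onto $F_c$.
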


\begin{proof}
Our assumptions imply that $C \circ \hat Q$ is feedback
composable with $\hat S_1$. Using Definition \ref{def:ClosedLoop},
Proposition \ref{prop:ClosedLoop:0}, the strictness of $F_1$, and the properties
\ref{eq:ii:BehaviorPerturbed}-\ref{eq:iii:BehaviorPerturbed}, it is
straightforward to show that
$(y_1,y_2)$ is an element of the behavior of the closed loop in
\ref{fig:PerturbedClosedLoop} iff there exists
$(y_c,x) \in \mathcal{B}((C \circ \hat Q) \times \hat S_1)$
satisfying $y_1(t) \in P_3(y_c(t))$ and $y_2(t) \in P_4(x(t))$
for all $t$.
Consequently, if $(y_1,y_2)$ is an element of the behavior of the
closed loop in \ref{fig:PerturbedClosedLoop}, then there exist
$(y_c,x) \in \hat \Sigma_1$ satisfying $y_1(t) \in P_3(y_c(t))$
and $y_2(t) \in P_4(x(t))$ for all $t$, and so
$(y_1,y_2) \in \Sigma_1$ by the definition of $\hat \Sigma_1$.
\end{proof}

\begin{corollary}
\label{c:PerturbedRefinement}
In the context of Theorem~\ref{th:PerturbedRefinement}, if $C$ solves
an abstract control problem
$(S_2,\Sigma_2)$ with
$(\hat S_1,\hat \Sigma_1)\preccurlyeq_{\hat Q} (S_2,\Sigma_2)$,
where $X_2$ is the state space of $S_2$,
then the behavior of the closed loop
in~\ref{fig:PerturbedClosedLoop} is a subset of~$\Sigma_1$.
\end{corollary}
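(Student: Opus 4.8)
The plan is to derive Corollary~\ref{c:PerturbedRefinement} as a direct consequence of Theorem~\ref{th:PerturbedRefinement} together with the refinement result in Theorem~\ref{th:refinement}. The hypotheses give an abstract control problem $(S_2,\Sigma_2)$ such that $(\hat S_1,\hat\Sigma_1)\preccurlyeq_{\hat Q}(S_2,\Sigma_2)$, which by definition means $\hat S_1 \preccurlyeq_{\hat Q} S_2$ and that $\Sigma_2$ is an abstract specification associated with $\hat S_1$, $S_2$, $\hat Q$ and $\hat\Sigma_1$. Since $C$ is assumed to solve the abstract control problem $(S_2,\Sigma_2)$, Theorem~\ref{th:refinement} applies verbatim with $S_1$, $\Sigma_1$, $Q$ there instantiated by $\hat S_1$, $\hat\Sigma_1$, $\hat Q$ here, and yields that the refined controller $C\circ\hat Q$ solves the control problem $(\hat S_1,\hat\Sigma_1)$.

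At this point I would simply invoke Theorem~\ref{th:PerturbedRefinement}: all of its hypotheses are in force by assumption (the simple system $S_1$, the perturbation maps $P_i$, the strict quantizer $Q$, the controller $C$, strictness of $F_1$, and the robust specification $\hat\Sigma_1$ of $\Sigma_1$ w.r.t.\ $P_3$ and $P_4$ with the auxiliary system $\hat S_1$ built according to~\ref{e:pert:sys:aux}), and the extra hypothesis of Theorem~\ref{th:PerturbedRefinement} — namely that $C\circ\hat Q$ solves $(\hat S_1,\hat\Sigma_1)$ — has just been established. Hence the conclusion of Theorem~\ref{th:PerturbedRefinement} gives that the behavior of the perturbed closed loop in~\ref{fig:PerturbedClosedLoop} is a subset of $\Sigma_1$, which is exactly the claim.

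The proof is therefore a two-line chaining argument, and I do not anticipate any genuine obstacle; the only thing that needs a moment of care is bookkeeping of the identifications, i.e.\ checking that the abstract specification condition in Definition~\ref{def:AbstractSpecification} is precisely the input that Theorem~\ref{th:refinement} consumes, so that applying it with $(\hat S_1,\hat\Sigma_1)$ in place of $(S_1,\Sigma_1)$ and $\hat Q$ in place of $Q$ is legitimate — which it is, since $\hat S_1$ is again a simple system of the form~\ref{e:systemMooreId} and $\hat Q = Q\circ P_2$ is strict as a composition of strict maps with non-empty domains. One could also note that $\hat Q$ being strict is needed for $C\circ\hat Q$ to be well-defined, but this is already implicit in the statement of Theorem~\ref{th:PerturbedRefinement}. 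I would present the corollary's proof in essentially one sentence: by Theorem~\ref{th:refinement}, $C\circ\hat Q$ solves $(\hat S_1,\hat\Sigma_1)$, and the claim then follows from Theorem~\ref{th:PerturbedRefinement}.
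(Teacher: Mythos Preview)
Your proposal is correct and matches the paper's approach: the paper states that the corollary ``follows immediately by Theorem~\ref{th:refinement}'' and gives no separate proof, so your one-line chaining — Theorem~\ref{th:refinement} yields that $C\circ\hat Q$ solves $(\hat S_1,\hat\Sigma_1)$, then Theorem~\ref{th:PerturbedRefinement} finishes — is precisely what is intended.
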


In the following example we demonstrate
that it is crucial to
account for the measurement errors $P_2$ in terms of the
auxiliary quantizer $\hat Q=Q \circ P_2$, as opposed to accounting
for those type of disturbances in terms of an
alternative auxiliary system
$\tilde S_1 = ( X_1, X_1, \hat U_1, \hat U_1, X_1, \tilde F_1, \id )$
with $\tilde F_1$ given by
\begin{IEEEeqnarray}{c}\label{e:aux2}
\tilde F_1(x_1,u)=P_2(F_1(x_1,P_1(u))).
\end{IEEEeqnarray}

\begin{example}
\label{ex:Robustness}
We consider the simple system $S_1$ of the form \ref{e:systemMooreId}
with the transition function illustrated graphically
\begin{center}
\begin{tikzpicture}[>=latex,thick,shorten >=1pt,node distance=2 cm]
\small

\node[state]  at (1,0)  (A)  {$a$};
\node[state]  at (3,0)   (B)  {$b$};
\node[state]  at (1,-1.2)  (C)  {$c$};
\node[state] at (3,-1.2) (D)  {$d$};

\path[->] (A) edge     node[above]  {$\{0,1\}$} (B);
\path[->] (A) edge     node[right]  {$\{0,1\}$} (C);

\path[->] (B) edge       node[right]  {$1$} (D);
\path[->] (C) edge       node[above]  {$0$} (D);

\path[->] (B) edge[loop right] node  {$0$} (B);
\path[->] (C) edge[loop left ] node  {$1$} (C);

\path[->] (D) edge[loop right]   node[right]  {$\{0,1\}$} (D);

\end{tikzpicture} 
\end{center}
The state and input alphabet are given by
$X_1=\{a,b,c,d\}$ and $U_1=\{0,1\}$, respectively.
Suppose we
are given the specification $\Sigma_1$ on $U_1\times X_1$ defined
implicitly by $(u,x)\in \Sigma_1$ iff $d$ is in the image
of $x$.
Let us consider the quantizer $Q=\id$ and the perturbation maps
$P_1=P_3=P_4=\id$ and $P_2$ defined by $P_2(a)=\{a\}$, $P_2(b)=P_2(c)=\{b,c\}$ and
$P_2(d)=\{d\}$.
Let the auxiliary system $\tilde S_1$
coincide with $S_1$ except the transition function is given by
$\tilde F_1(x,u)=P_2(F_1(x,u))$. 

The controller $C\circ Q$, with $C$ given as static system with
strict transition function and
output map
$H_c \colon \{0\} \times X_1 \rightrightarrows U_1 \times X_1$ defined
by
$H_c(0,a) = H_c(0,d) = U_1 \times \{ a \}$,
$H_c(0,b) = \{ (1,a) \}$,
$H_c(0,c) = \{ (0,a) \}$
solves the control problem $(\tilde S_1,\Sigma_1)$. 
However, $(u,x) = \big((0,a),(1,c),(1,c),(1,c),\ldots\big)$
is an element
of the behavior of the closed loop
according to~\ref{fig:PerturbedClosedLoop} and yet violates the specification
$\Sigma_1$.
\end{example}
As the example demonstrates, we cannot rely on the auxiliary system
with transition function~\ref{e:aux2} to synthesize a robust
controller but we need a quantizer that is robust with respect to disturbances. That is essentially
expressed by requiring that $C \circ \hat Q$ with $\hat Q=Q\circ
P_2$ solves the auxiliary control problem $(\hat S_1,\hat \Sigma_1)$.
Intuitively, we require that the controller $C$ ``works'' with any
quantizer symbol $x_2\in Q( P_2(x_1))$ no matter how the disturbance
$P_2$ is acting on the state $x_1$.
Note that in
Example~\ref{ex:Robustness}, the controller $C\circ (\id \circ P_2)$
does not solve the control problem $(\hat S_1,\hat \Sigma_1)$ (which
in this case equals $(S_1,\Sigma_1)$).

Finally,
we would like to mention that in the
context of control systems,  \emph{any}
symbolic controller synthesis procedure that is based on a
deterministic quantizer
is bound to be \begriff{non-robust}.
Indeed, consider the context of
Theorem~\ref{th:PerturbedRefinement} and suppose that
$X_1=\mathbb{R}^n$, $X_2$ is a partition of $X_1$ and let
$P_2(x_1)$ equal the closed Euclidean ball with radius
$\varepsilon \ge 0$ centered at $x_1$.
Let us consider the deterministic quantizer
\mbox{$Q=\:\in\:$}. Then $\hat Q=Q\circ P_2$ is deterministic only in the
degenerate case $\varepsilon=0$.

\section{Canonical Feedback Refinement Relations}
\label{ss:FRRsAndAttainableSets}

In this section, we show that the set membership relation $\in$,
together with an abstraction whose
state alphabet is a cover of the concrete state
alphabet is canonical.
A \begriff{cover} of a set $X$ is a set of subsets of $X$ whose union
equals $X$.

We show that 
$(S_1,\Sigma_1)\preccurlyeq_Q
(S_3,\Sigma_3)$ implies that there
exist $(S_2,\Sigma_2)$, with $X_2$ being a cover of $X_1$ by non-empty
subsets, together with a relation $R$ such that the following holds:
\begin{IEEEeqnarray*}{c}
(S_1,\Sigma_1)
\preccurlyeq_\in
(S_2,\Sigma_2)
\preccurlyeq_R
(S_3,\Sigma_3).
\end{IEEEeqnarray*}
This implies that if we can solve the concrete control problem
$(S_1,\Sigma_1)$ using some abstract control problem $(S_3,\Sigma_3)$,
then we can equally use an abstract control problem $(S_2,\Sigma_2)$
with $X_2$ being to a cover of $X_1$ by non-empty subsets.
Moreover, $(S_2,\Sigma_2)$ can be derived from the problem
$(S_3,\Sigma_3)$ and the quantizer $Q$ alone and is otherwise
independent of $(S_1,\Sigma_1)$.

\subsection{Canonical abstractions}
\label{ss:Canonical}

\begin{proposition}
\label{prop:CanonicalFRR}
Let $S_1$ and $S_2$ be simple systems of the form~\ref{e:systemMooreId},
in which $X_2$ is a cover of $X_1$ by
non-empty subsets and $U_2 \subseteq U_1$.
Then $S_1 \preccurlyeq_{\in} S_2$ iff the following conditions hold.
\begin{enumerate}
\item
\label{prop:CanonicalFRR:in}
$x \in \Omega \in X_2$ implies
$U_{S_2}(\Omega) \subseteq U_{S_1}(x)$.
\item
\label{prop:CanonicalFRR:dyn}
If
$\Omega, \Omega' \in X_2$,
$u\in U_{S_2}(\Omega)$ and
$\Omega' \cap F_1(\Omega,u) \not= \emptyset$, then
$\Omega' \in F_2(\Omega,u)$.
\popQED
\end{enumerate}
\end{proposition}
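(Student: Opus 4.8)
The plan is to prove the biconditional by unwinding Definition~\ref{def:FeedbackRefinementRelation} with $Q = \mathord{\in}$, i.e.\ $(x,\Omega) \in \mathord{\in}$ iff $x \in \Omega$, and showing that conditions \ref{def:FeedbackRefinementRelation:in}--\ref{def:FeedbackRefinementRelation:dyn} of that definition are equivalent, under the cover hypothesis, to conditions \ref{prop:CanonicalFRR:in}--\ref{prop:CanonicalFRR:dyn} stated here. First I would note that $\mathord{\in} \subseteq X_1 \times X_2$ is strict precisely because $X_2$ is a \emph{cover} of $X_1$: every $x \in X_1$ lies in some $\Omega \in X_2$, so $(\mathord{\in})(x) \neq \emptyset$. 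The inclusion $U_2 \subseteq U_1$ is assumed, so the standing hypotheses of Definition~\ref{def:FeedbackRefinementRelation} are in place, and it remains to match the two pairs of conditions pointwise over related pairs $(x,\Omega)$ with $x \in \Omega$.

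For the ``input'' conditions: condition~\ref{def:FeedbackRefinementRelation:in} reads $U_{S_2}(\Omega) \subseteq U_{S_1}(x)$ for every $(x,\Omega)$ with $x \in \Omega$, which is verbatim condition~\ref{prop:CanonicalFRR:in} (recalling that for a singleton-indexed set $U_{S_2}(\Omega)$ here denotes the admissible inputs of $S_2$ at the \emph{state} $\Omega \in X_2$, not the image under $U_{S_2}$ of a subset). So that equivalence is immediate once the quantifier ``for all related pairs'' is spelled out as ``for all $\Omega \in X_2$ and all $x \in \Omega$''.

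The substantive step is the ``dynamics'' condition. Condition~\ref{def:FeedbackRefinementRelation:dyn} says: for every $(x,\Omega)$ with $x \in \Omega$ and every $u \in U_{S_2}(\Omega)$, we have $(\mathord{\in})(F_1(x,u)) \subseteq F_2(\Omega,u)$; that is, every $\Omega' \in X_2$ containing some point of $F_1(x,u)$ must lie in $F_2(\Omega,u)$. Unwinding the image over the set-valued map, the left-hand side is $\bigcup_{x' \in F_1(x,u)} \{\Omega' \in X_2 : x' \in \Omega'\}$, so the condition is: whenever $x \in \Omega$, $u \in U_{S_2}(\Omega)$, $\Omega' \in X_2$, and $\Omega' \cap F_1(x,u) \neq \emptyset$, then $\Omega' \in F_2(\Omega,u)$. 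Now quantifying additionally over $x \in \Omega$ and using that $F_1(\Omega,u) = \bigcup_{x \in \Omega} F_1(x,u)$, the hypothesis ``$\exists x \in \Omega$ with $\Omega' \cap F_1(x,u) \neq \emptyset$'' is exactly ``$\Omega' \cap F_1(\Omega,u) \neq \emptyset$'', which recovers condition~\ref{prop:CanonicalFRR:dyn}. Here one has to be slightly careful with the placement of quantifiers: $S_1 \preccurlyeq_{\in} S_2$ requires the containment for \emph{each} related pair $(x,\Omega)$, whereas \ref{prop:CanonicalFRR:dyn} is phrased with the existentially-flavoured set $F_1(\Omega,u)$; the point is that $\Omega' \cap F_1(\Omega,u) \neq \emptyset$ iff $\Omega' \cap F_1(x,u) \neq \emptyset$ for some $x \in \Omega$, and one checks each direction of the ``iff'' separately. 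I also need to observe that $u \in U_{S_2}(\Omega)$ is the common hypothesis on inputs in both formulations, which is consistent.

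The main obstacle — though a mild one — is bookkeeping the quantifiers over $x$ versus $\Omega$ cleanly, and making sure the notational overload of $U_{S_2}(\cdot)$ (admissible inputs at a state of $S_2$ versus the image of a subset of $X_1$ under $U_{S_1}$) does not cause confusion; I would state explicitly at the outset which reading is meant in each occurrence. Everything else is a routine unfolding of definitions, and no extra machinery beyond Definition~\ref{def:FeedbackRefinementRelation} and the identification of set-valued maps with relations is needed.
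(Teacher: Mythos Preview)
Your proposal is correct and is exactly the straightforward unfolding of Definition~\ref{def:FeedbackRefinementRelation} with $Q=\mathord{\in}$ that the paper has in mind; the paper in fact omits the proof entirely, remarking only that it is ``straightforward''. Your care with the quantifier bookkeeping (matching ``for all $x\in\Omega$'' against the set $F_1(\Omega,u)=\bigcup_{x\in\Omega}F_1(x,u)$) and with the reading of $U_{S_2}(\Omega)$ is appropriate and nothing further is needed.
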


The above result, whose straightforward proof we omit, will be used in
our proof of the canonicity result, Theorem
\ref{th:CanonicalFRRsExist}. It additionally indicates
constructive methods to compute
a canonical abstraction $S_2$ of a plant $S_1$ if the abstract state space
$X_2$ and the input alphabet $U_2 \subseteq U_1$ are
given. From condition \ref{prop:CanonicalFRR:dyn}
it follows that, if $\Omega \in X_2$, $u \in U_2$ and
$F_1(x,u) \not= \emptyset$ for every $x \in \Omega$, then we may
either choose $F_2(\Omega,u)$ to be empty, which is of course not
desirable\footnote{One should always choose $F_2(\Omega,u)\neq \emptyset$, since it
enlarges the set of control letters available to
any abstract controller and thereby facilitates the solution of the
abstract control problem.}, or ensure that the latter
set contains every \begriff{cell} $\Omega'$ that intersects the attainable set
$F_1(\Omega,u)$ of the cell $\Omega$ under the control letter
$u$. This can be achieved by numerically over-approximating attainable
sets, for which many
algorithms are available, see e.g.
\cite{i11abs} and Section~\ref{s:Construction}.

On the other hand, condition \ref{prop:CanonicalFRR:in} requires that 
$F_2(\Omega,u)$ is empty whenever $F_1(x,u)$ is so for some
$x \in \Omega$.
This raises the question of how to detect the phenomenon of blocking
of the dynamics of the plant. If the transition function $F_1$ is
explicitly given, we assume that its description directly facilitates
the detection of blocking. In the case that the plant represents a
sampled system, so that $F_1$ is the time-$\tau$-map of some
continuous-time control system, blocking can usually be detected in the
course of over-approximating attainable sets.
For example, if an
over-approximation $W$ of the attainable set $F_1(\Omega,u)$ is
computed using interval arithmetic, and if $F_1(x,u) = \emptyset$ for
some $x \in \Omega$, then $W$ will be unbounded, e.g. \cite[Chapter
II.3]{Hartman02}, which is easily detected. 

\subsection{Canonicity result}

Before we state and prove the canonicity result, we introduce a 
technical condition that we impose on the feedback refinement relation $Q$ from 
$(S_1,\Sigma_1)$ to $(S_3,\Sigma_3)$, i.e., 
\begin{condC}
\label{e:th:CanonicalFRRs}
if
$\emptyset \not= Q^{-1}(x) = Q^{-1}(\tilde x)$,
$\emptyset \not= Q^{-1}(x') = Q^{-1}(\tilde x')$,
$\tilde x' \in F_3(\tilde x,u)$, and
$u \in U_{S_3}(x)$,
then $x' \in F_3(x,u)$.%
\end{condC}%
\noindent
We point out that condition \ref{e:th:CanonicalFRRs} is not an essential
restriction and it actually holds for a great variety of
abstractions and relations.
For example, it automatically holds if the abstraction $S_3$ is
defined as a quotient system
\cite[Definition~4.17]{Tabuada09}. In that
case, the elements of $X_3$ correspond to the equivalence
classes of an equivalence relation on $X_1$. Therefore, we have
that $Q^{-1}(x)=Q^{-1}(\tilde x)$ implies
$x=\tilde x$ and condition \ref{e:th:CanonicalFRRs} is trivially
satisfied. Similarly, relations that are based on level sets of
simulation functions \mbox{$V:X_1\times X_3\to \mathbb{R}_{+}$}
with $X_1,X_3\subseteq \mathbb{R}^n$,
see e.g.~\cite{GirardPappas07b}, for popular choices of simulation
functions like $V(x_1,x_3)=\sqrt{(x_1-x_3)^\top P(x_1-x_3)}$ with $P$ being
a positive definite matrix,
where $x^\top$ denotes the transpose of $x$,
satisfy~\ref{e:th:CanonicalFRRs}. In this
case, the relation is given by $Q=\{(x_1,x_3)\in X_1\times X_3\mid V(x_1,x_3)\leq
\varepsilon\}$ and again $Q^{-1}(x)=Q^{-1}(\tilde x)$ implies
$x=\tilde x$ and we conclude that~\ref{e:th:CanonicalFRRs} holds.
Lastly, the condition~\ref{e:th:CanonicalFRRs} also holds, for the
case that $Q$ is given and the abstraction $S_3$
is computed using a deterministic algorithm to over-approximate
attainable sets. This is immediate from the following reformulation of
the condition \ref{def:FeedbackRefinementRelation:dyn} in Definition
\ref{def:FeedbackRefinementRelation}:
If $x_2,x_2' \in X_2$,
$u \in U_{S_2}(x_2)$, and
$Q^{-1}(x_2') \cap F_1(Q^{-1}(x_2),u) \not= \emptyset$, then
$x_2' \in F_2(x_2,u)$.

\begin{theorem}
\label{th:CanonicalFRRsExist}
Let $(S_3,\Sigma_3)$ be a control problem, in which $S_3$ is simple
and of the form \ref{e:systemMooreId}.
Let $X_1$ be any set, and assume that
$Q \colon X_1 \rightrightarrows X_3$ satisfies the condition
\ref{e:th:CanonicalFRRs}.\\
Then there exist a simple system $S_2$ of the form \ref{e:systemMooreId}, a relation
$R \subseteq X_2 \times X_3$ and a specification $\Sigma_2$ on
$U_2 \times X_2$ such that the following holds.
\begin{enumerate}
\def\theenumi{$\ast$}
\item
\label{th:CanonicalFRRsExist:itemCondition}
If $(S_1,\Sigma_1) \preccurlyeq_Q (S_3,\Sigma_3)$
and the system $S_1$ has state space $X_1$, then
$(S_1,\Sigma_1) \preccurlyeq_{\in} (S_2,\Sigma_2) \preccurlyeq_R (S_3,\Sigma_3)$
and
$X_2$ is a cover of $X_1$ by non-empty subsets.
\popQED
\end{enumerate}
\end{theorem}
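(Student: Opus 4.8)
The plan is to build $S_2$, $R$ and $\Sigma_2$ explicitly from the data $(S_3,\Sigma_3)$, $Q$ and $X_1$ alone, and then, for an arbitrary $S_1$, $\Sigma_1$ with $(S_1,\Sigma_1) \preccurlyeq_Q (S_3,\Sigma_3)$, to verify the five facts implicit in the claim: $S_2 \preccurlyeq_R S_3$, $S_1 \preccurlyeq_{\in} S_2$, that $X_2$ is a cover of $X_1$ by non-empty subsets, and that $\Sigma_2$ and $\Sigma_3$ are abstract specifications associated with $S_1,S_2,{\in},\Sigma_1$ and with $S_2,S_3,R,\Sigma_2$, respectively. (If $X_1 = \emptyset$ the hypothesis of \ref{th:CanonicalFRRsExist:itemCondition} is vacuous, so assume $X_1 \neq \emptyset$.) Concretely, I would take $X_2$ to be the set of non-empty fibres of $Q$, $X_2 = \Menge{Q^{-1}(x_3)}{x_3 \in X_3} \setminus \{\emptyset\}$, set $U_2 = U_3$, define $S_2 = (X_2,X_2,U_2,U_2,X_2,F_2,\id)$ with
\[
F_2(\Omega,u) = \Menge{Q^{-1}(\hat x_3)}{\hat x_3 \in F_3(\bar x_3,u)\text{ for some }\bar x_3\text{ with }Q^{-1}(\bar x_3) = \Omega\text{ and }u \in U_{S_3}(\bar x_3)} \cap X_2,
\]
let $R \subseteq X_2 \times X_3$ be given by $(\Omega,x_3) \in R$ exactly when $\Omega = Q^{-1}(x_3)$, and let $\Sigma_2$ consist of all $(u,x_2)$ for which there is an $x_3$, defined on the same interval, with $(u,x_3) \in \Sigma_3$ and $x_2(t) = Q^{-1}(x_3(t))$ for all $t$. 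Strictness of $Q$ makes the fibres cover $X_1$, so $X_2$ is a cover of $X_1$ by non-empty subsets, $S_2$ is a simple system of the form \ref{e:systemMooreId}, $R$ is strict, and none of $S_2$, $R$, $\Sigma_2$ refers to $S_1$ or $\Sigma_1$.

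Now fix $S_1,\Sigma_1$ of the form \ref{e:systemMooreId} with state space $X_1$ such that $(S_1,\Sigma_1) \preccurlyeq_Q (S_3,\Sigma_3)$; in particular $U_2 = U_3 \subseteq U_1$. To check $S_2 \preccurlyeq_R S_3$: for condition \ref{def:FeedbackRefinementRelation:in}, given $(\Omega,x_3) \in R$ and $u \in U_{S_3}(x_3)$, pick $x_1 \in \Omega = Q^{-1}(x_3)$; then $(x_1,x_3) \in Q$, so $u \in U_{S_1}(x_1)$ and $Q(F_1(x_1,u)) \subseteq F_3(x_3,u)$ by Definition \ref{def:FeedbackRefinementRelation}, and picking $x_1' \in F_1(x_1,u) \neq \emptyset$ and $\hat x_3 \in Q(x_1') \neq \emptyset$ exhibits the non-empty fibre $Q^{-1}(\hat x_3) \in F_2(\Omega,u)$, so $u \in U_{S_2}(\Omega)$. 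For condition \ref{def:FeedbackRefinementRelation:dyn} I must show $R(F_2(\Omega,u)) \subseteq F_3(x_3,u)$ whenever $(\Omega,x_3) \in R$ and $u \in U_{S_3}(x_3)$: an element of $R(F_2(\Omega,u))$ is some $x_3''$ with $Q^{-1}(x_3'') = \Omega' \in F_2(\Omega,u)$, and unwinding the definition of $F_2$ produces $\bar x_3,\hat x_3$ with $\emptyset \neq Q^{-1}(\bar x_3) = \Omega = Q^{-1}(x_3)$, $u \in U_{S_3}(\bar x_3)$, $\hat x_3 \in F_3(\bar x_3,u)$ and $\emptyset \neq Q^{-1}(\hat x_3) = \Omega' = Q^{-1}(x_3'')$; condition \ref{e:th:CanonicalFRRs}, applied with $(x,\tilde x,x',\tilde x') = (x_3,\bar x_3,x_3'',\hat x_3)$, then yields $x_3'' \in F_3(x_3,u)$, as required.

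For $S_1 \preccurlyeq_{\in} S_2$ I would invoke Proposition \ref{prop:CanonicalFRR}, its two hypotheses ($X_2$ a cover of $X_1$ by non-empty subsets, $U_2 \subseteq U_1$) being already in hand. Its condition \ref{prop:CanonicalFRR:in} holds because $u \in U_{S_2}(\Omega)$ forces some $\bar x_3$ with $Q^{-1}(\bar x_3) = \Omega$ and $u \in U_{S_3}(\bar x_3)$, so for $x \in \Omega$ one has $(x,\bar x_3) \in Q$ and hence $u \in U_{S_1}(x)$; its condition \ref{prop:CanonicalFRR:dyn} holds because, given $\Omega' = Q^{-1}(\hat x_3) \in X_2$ with $\Omega' \cap F_1(\Omega,u) \neq \emptyset$ and $u \in U_{S_2}(\Omega)$, any witness $x_1' \in \Omega' \cap F_1(x,u)$ with $x \in \Omega$ satisfies $\hat x_3 \in Q(x_1')$, and choosing $\bar x_3$ as above gives $\hat x_3 \in Q(x_1') \subseteq Q(F_1(x,u)) \subseteq F_3(\bar x_3,u)$, i.e.\ $\Omega' \in F_2(\Omega,u)$. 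The two specification properties are then immediate: if $(u,x_2) \in \Sigma_2$ and $x_1(t) \in x_2(t)$ for all $t$, then the witnessing $x_3$ satisfies $(x_1(t),x_3(t)) \in Q$ for all $t$, whence $(u,x_1) \in \Sigma_1$ because $\Sigma_3$ is an abstract specification associated with $S_1,S_3,Q,\Sigma_1$; and if $(u,x_3) \in \Sigma_3$ with $(x_2(t),x_3(t)) \in R$ for all $t$, then $x_2(t) = Q^{-1}(x_3(t))$ and this very $x_3$ witnesses $(u,x_2) \in \Sigma_2$. Combining everything gives $(S_1,\Sigma_1) \preccurlyeq_{\in} (S_2,\Sigma_2) \preccurlyeq_R (S_3,\Sigma_3)$.

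I expect the crux to be condition \ref{def:FeedbackRefinementRelation:dyn} for $S_2 \preccurlyeq_R S_3$: since $F_2$ is manufactured by pushing $F_3$ forward along the quotient $Q^{-1}$, and distinct states of $S_3$ may have the same $Q$-fibre, one has to rule out spurious transitions in $F_2$ when reading them back through $R$, and this is exactly what condition \ref{e:th:CanonicalFRRs} delivers. A secondary subtlety, accounted for above, is that $S_2$, $R$ and $\Sigma_2$ must be definable without reference to $S_1$, so the hypothesis $S_1 \preccurlyeq_Q S_3$ -- in particular strictness of $Q$ and $F_1$, which is what makes successor fibres non-empty and hence $U_{S_2}$ and $F_2$ ``large enough'' -- may be invoked only a posteriori, while checking the conclusion.
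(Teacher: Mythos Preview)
Your construction and verification are essentially identical to the paper's: you take $X_2$ to be the non-empty $Q$-fibres, $R$ the fibre relation, $U_2=U_3$, $F_2(\Omega,u)=R^{-1}(F_3(R(\Omega),u))$ (your extra clause $u\in U_{S_3}(\bar x_3)$ is redundant, since $\hat x_3\in F_3(\bar x_3,u)$ already forces it), and $\Sigma_2$ the $R$-pullback of $\Sigma_3$; the checks of Proposition~\ref{prop:CanonicalFRR} for $S_1\preccurlyeq_{\in}S_2$, of Definition~\ref{def:FeedbackRefinementRelation} for $S_2\preccurlyeq_R S_3$ (with condition~\ref{e:th:CanonicalFRRs} used exactly where you use it), and of the two abstract-specification properties all match the paper's proof step for step.
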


\begin{proof}
We will prove that \ref{th:CanonicalFRRsExist:itemCondition} holds for
the following choices
of $S_2$, $R$ and $\Sigma_2$:
\ifCLASSOPTIONtwocolumn\relax\else\goodbreak\fi
$X_2
=
\Menge{ \Omega }%
{ \emptyset \not= \Omega = Q^{-1}(x) \wedge x \in X_3}
$,
$R(\Omega)
=
\Menge{x \in X_3}{\Omega = Q^{-1}(x)}
$,
$U_2 = U_3$,
$F_2(\Omega,u)
=
R^{-1} ( F_3( R( \Omega ), u ) )
$,
and $(u,\Omega)\in(U_2\times X_2)^\infty$ is an element of $\Sigma_2$
iff there exists $(u,x_3) \in \Sigma_3$ satisfying
$(\Omega(t),x_3(t)) \in R$ for all $t$ in the domain of $u$.
To establish \ref{th:CanonicalFRRsExist:itemCondition}, assume
that $(S_1,\Sigma_1) \preccurlyeq_Q (S_3,\Sigma_3)$. Then $Q$ is
strict, which already proves our claim on $X_2$, and $S_1$ is simple,
and so we can assume that $S_1$ takes the form \ref{e:systemMooreId}.

To prove $S_1 \preccurlyeq_{\in} S_2$, we first notice that the
condition \ref{prop:CanonicalFRR:in} in Proposition
\ref{prop:CanonicalFRR} is
satisfied. Indeed, let
$x_1 \in \Omega \in X_2$ and $u \in U_{S_2}(\Omega)$. By our choice
of $F_2$ and $R$, there exists $x_3$ satisfying
$(x_1,x_3) \in Q$ and $u \in U_{S_3}(x_3)$.
Then $u \in U_{S_1}(x_1)$
by Def.~\ref{def:FeedbackRefinementRelation} applied to
$S_1 \preccurlyeq_Q S_3$.
To establish the condition \ref{prop:CanonicalFRR:dyn} in
Prop.~\ref{prop:CanonicalFRR},
we let
$\Omega, \Omega' \in X_2$ and $u \in U_{S_2}(\Omega)$ and assume that
$\Omega' \cap F_1(\Omega,u) \not= \emptyset$. By the latter fact there
exist $x_1 \in \Omega$ and $x_1' \in \Omega' \cap F_1(x_1,u)$, and
$u \in U_{S_2}(\Omega)$ implies that there exists $x_3$ such that
$\Omega = Q^{-1}(x_3)$ and $u \in U_{S_3}(x_3)$. We pick $x_3'$
satisfying $\Omega' = Q^{-1}(x_3')$.
Then $(x_1,x_3), (x_1', x_3') \in Q$, and so $S_1 \preccurlyeq_Q S_3$
implies $x_3' \in Q(x_1') \subseteq F_3(x_3,u)$. Hence,
$\Omega' \in F_2(\Omega,u)$
by our choice of $F_2$.
This proves $S_1 \preccurlyeq_{\in} S_2$.

To prove $S_2 \preccurlyeq_R S_3$, let
$(\Omega,x_3) \in R$ and $u \in U_{S_3}(x_3)$ and pick any
$x_1 \in \Omega$. Then $(x_1,x_3) \in Q$
by our choice of $R$, and using 
$S_1 \preccurlyeq_Q S_3$ we obtain $u \in U_{S_1}(x_1)$. The latter
fact implies that there exists $x_1' \in F_1(x_1,u)$, and using 
$S_1 \preccurlyeq_Q S_3$ again we see that
$Q(x_1') \subseteq F_3(x_3,u)$.
Since $Q$ is strict we may pick
$x_3' \in Q(x_1')$. Then $R^{-1}(x_3') \not= \emptyset$, and hence,
$u \in U_{S_2}(\Omega)$
by the definition of $F_2$,
which proves the condition
\ref{def:FeedbackRefinementRelation:in} in Definition
\ref{def:FeedbackRefinementRelation}.
To prove the condition \ref{def:FeedbackRefinementRelation:dyn}
in that definition, let
$(\Omega,x_3) \in R$, $u \in U_{S_3}(x_3)$ and
$\Omega' \in F_2(\Omega,u)$. Then $\Omega' \in R^{-1}(F_3(\Omega,u))$,
so there exist $\tilde x_3$ and $\tilde x_3' \in F_3(\tilde x_3,u)$
satisfying
$\Omega = Q^{-1}(\tilde x_3)$ and $\Omega' = Q^{-1}(\tilde x_3')$.
Then condition \ref{e:th:CanonicalFRRs} implies $x_3' \in F_3(x_3,u)$,
and in turn, $R(\Omega') \subseteq F_3(x_3,u)$.

To complete the proof, we notice that, by the definition of
$\Sigma_2$, $\Sigma_3$ is an abstract specification associated with
$S_2$, $S_3$, $R$ and $\Sigma_2$, which shows
$(S_2,\Sigma_2) \preccurlyeq_R (S_3,\Sigma_3)$.
Finally, to prove $(S_1,\Sigma_1) \preccurlyeq_{\in} (S_2,\Sigma_2)$,
let $(u,\Omega) \in \Sigma_1$, assume that $u$ is defined on
$\intco{0;T}$, and let $x_1 \colon \intco{0;T} \to X_1$ satisfy
$x_1(t) \in \Omega(t)$ for all $t \in \intco{0;T}$.
Then, by the definition of $\Sigma_2$, there exists
$(u,x_3) \in \Sigma_3$ such that
$R(\Omega(t)) = \{x_3(t)\}$ for all $t \in \intco{0;T}$. The latter
condition implies $(x_1(t), x_3(t)) \in Q$, and
$(S_1,\Sigma_1) \preccurlyeq_Q (S_3,\Sigma_3)$ implies
$(u,x_1) \in \Sigma_1$.
\end{proof}

\section{Computation of Abstractions\\ for Perturbed Sampled Control
Systems}
\label{s:Construction}

In the previous section we have seen that the computation of
abstractions basically reduces to the over-approximation of attainable sets
of the plant.
A large number of over-approximation
methods have been proposed which apply to different classes of
systems, e.g.
\cite{Junge99,Osipenko07,Tabuada09,i11abs,i13qsupp}.
In this section, we present an approach to over-approximate attainable
sets of continuous-time perturbed control systems,
based on a matrix-valued Lipschitz inequality. 

\subsection{The sampled system}
 
Let us consider a perturbed control system of the form
\begin{equation}
\label{e:System:c-time}
\dot x \in f(x,u) + W
\end{equation}
with $f:\mathbb{R}^n\times U\to \mathbb{R}^n$, $U\subseteq
\mathbb{R}^m$ and $W\subseteq \mathbb{R}^n$.
We assume throughout this section that
$U$ is non-empty, $W$ contains the origin, and that $f(\cdot,u)$ is
locally Lipschitz for all $u \in U$. 
We use the set $W$ to represent various uncertainties in the
dynamics of the control system \ref{e:System:c-time}.

For $\tau \in \mathbb{R}_+$ and an interval $I \subseteq \intcc{0,\tau}$, a \begriff{solution of~\ref{e:System:c-time} on $I$  with (constant) input $u \in U$} is
defined as an absolutely continuous function $\xi \colon I \to \mathbb{R}^n$ that satisfies
$\dot \xi(t) \in f(\xi(t),u) + W$ for almost every (a.e.)
$t \in I$. 
We say that $\xi$ is \begriff{continuable to $\intcc{0,\tau}$} if there exists a solution $\bar \xi$ of~\ref{e:System:c-time} on $\intcc{0,\tau}$ with input $u \in U$ such that $\bar \xi|_{I} = \xi$.

We formulate a sampled variant of~\ref{e:System:c-time} as system as follows.

\begin{definition}
\label{def:SampledSystem}
Let
$S_1$
be a simple system
of the form \ref{e:systemMooreId},
and let $\tau > 0$.
We say that $S_1$ is
the \begriff{sampled system} associated with the
control system \ref{e:System:c-time} and the \begriff{sampling time}
$\tau$, if $X_1=\mathbb{R}^n$, $U_1=U$ and
the following holds:
$x_1 \in F_1(x_0,u)$ iff there exist a solution $\xi$ of \ref{e:System:c-time} on $\intcc{0,\tau}$ with input $u$ satisfying $\xi(0) = x_0$
and $\xi(\tau) = x_1$.
\end{definition}

In the sequel, $\varphi$ denotes the general solution of the
unperturbed system associated with \ref{e:System:c-time}
for constant inputs.
That is, if $x_0 \in \mathbb{R}^n$, $u \in U$, and $f(\cdot,u)$ is
locally Lipschitz, then $\varphi(\cdot,x_0,u)$ is the unique
non-continuable solution of the
initial value problem $\dot x = f(x,u)$, $x(0) = x_0$
\cite{Hartman02}.

Similar to other
approaches~\cite{ZamaniPolaMazoTabuada10,RunggerStursberg12} to
over-approximate attainable sets that are known for \emph{unperturbed} systems, our 
computation of attainable sets of the perturbed system is based on an estimate of the distance of neighboring
solutions of~\ref{e:System:c-time}.

\begin{definition}
\label{def:GrowthBound}
Consider the sets $K\subseteq \mathbb{R}^n$, $U'\subseteq U$ and the
sampling time $\tau>0$.
A map $\beta \colon \mathbb{R}_{+}^n \times U' \to \mathbb{R}_{+}^n$
is a \begriff{growth bound} on $K$, $U'$ associated with $\tau$ and
\ref{e:System:c-time} if the following conditions hold:
\begin{enumerate}
\item
\label{def:GrowthBound:beta}
$\beta(r,u) \geq \beta(r',u)$ whenever $r \geq r'$ and $u \in U'$,
\item
\label{def:GrowthBound:bound}
$\intcc{0,\tau} \times K \times U' \subseteq \dom \varphi$ and
if $\xi$ is a solution of \ref{e:System:c-time} on $\intcc{0,\tau}$ with input $u \in U'$ and $\xi(0),p \in K$
then
\begin{equation}
\label{e:def:GrowthBound}
| \xi(\tau) - \varphi(\tau,p,u) | \leq \beta( | \xi(0) - p |, u)
\end{equation}
holds component-wise.
\popQED
\end{enumerate}
\end{definition}

Let us emphasize some distinct features of the estimate~\ref{e:def:GrowthBound}. First of all, we formulate the
inequality~\ref{e:def:GrowthBound} component-wise, which allows to
bound the difference of neighboring solutions for each state coordinate independently. Second, $\beta$ is
a local estimate, i.e., we require~\ref{e:def:GrowthBound} to hold
only for initial states in $K$.  Moreover, $\beta$ is allowed to
depend on the input, but these inputs are assumed to be constant, and
we do not bound the effect of different inputs on
the distance of the solutions.
All those properties
contribute to more accurate over-approximations of the attainable
sets. This, in turn, leads to less conservative abstractions;
see our example in Section \ref{ss:Example:ZamaniPolaMazoTabuada10}.
Note that it is also immediate to account for extensions like time
varying inputs and using different sampling times.

\subsection{The abstraction}
\label{ss:TheAbstraction}
We continue with the construction of an abstraction $S_2$ of the
sampled system $S_1$. 
The state alphabet $X_2$ of the abstraction is defined
as a cover of the state alphabet $X_1$ where the elements of the cover
$X_2$ are non-empty, closed \begriff{hyper-intervals}, i.e., every element $x_2\in X_2$
takes the form
\[
\segcc{a,b}
=
\mathbb{R}^n
\cap
\left(
\intcc{a_1,b_1} \times \cdots \times \intcc{a_n,b_n}
\right)
\]
for some $a,b \in ( \mathbb{R} \cup \{ \pm \infty \} )^n$, $a \le b$.

Our notion of hyper-intervals allows for unbounded cells in $X_2$.
Nevertheless, in the computation of the abstraction
$S_2$, we work with a subset $\bar X_2 \subseteq X_2$ of compact cells.
We interpret the cells in $\bar X_2$ as the ``real'' quantizer symbols,
and the remaining ones, as overflow symbols,
see~\cite[Sect.~III.A]{i11abs}.

\begin{definition}\label{d:abstraction}
Consider two simple systems $S_1$ and $S_2$ of the form~\ref{e:systemMooreId}, a set
$\bar X_2\subseteq X_2$ and
a function $\beta \colon \mathbb{R}_{+}^n \times U_2 \to \mathbb{R}_{+}^n$.
Given $\tau>0$, suppose that $S_1$ is
the sampled system associated with~\ref{e:System:c-time}
and sampling time $\tau$.
We call $S_2$ an \begriff{abstraction} of $S_1$
based on $\bar X_2$ and $\beta$, if 
\begin{enumerate}
  \item \label{d:abstraction:cover}
  $X_2$ is a cover of $X_1$ by non-empty, closed hyper-intervals
  and every element $x_2\in \bar X_2$ is compact;
  \item \label{d:abstraction:inputs}
  $U_2\subseteq U_1$;
  \item \label{d:abstraction:rhs:2}
  for $x_2\in \bar X_2$, $x'_2\in X_2$ and $u\in U_2$ we have
  \begin{IEEEeqnarray}{c}\label{e:abs:tf}
    \left( \varphi(\tau,c,u) + \segcc{-r',r'} \right) \cap
    x'_2\neq\emptyset \implies x'_2\in F_2(x_2,u),
    \IEEEeqnarraynumspace
  \end{IEEEeqnarray}
where $\segcc{a,b} = x_2$, $c = \tfrac{b+a}{2}$, $r = \tfrac{b-a}{2}$ and
  $r'=\beta(r,u)$;
  \item \label{d:abstraction:rhs:1}
  $F_2(x_2,u)=\emptyset$ whenever $x_2\in X_2\setminus \bar X_2$, $u\in U_2$.
\popQED
\end{enumerate}
\end{definition}

Note that the implicit
definition of the transition function $F_2$ according to
\ref{d:abstraction:rhs:2} in Definition \ref{d:abstraction} is
equivalently expressible as follows.
Let $u \in U_2$ and
$\segcc{a,b} \in \bar X_2$, then
$\segcc{a',b'} \in X_2$ has to be an element of $F_2(\segcc{a,b},u)$ if
\begin{equation*}
\label{e:th:abstraction}
a' - r'
\leq
\varphi(\tau,c,u)
\leq
b' + r'
\end{equation*}
holds, where $c$, $r$ and $r'$ are as in Definition
\ref{d:abstraction}.

We illustrate the transition function $F_2(x_2,u)$ of an abstraction
in~\ref{f:abs-rhs}.

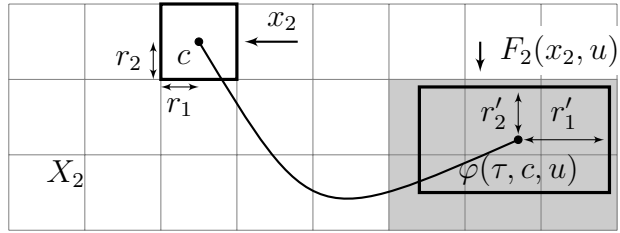
\begin{figure}[h]
\centering
\begin{tikzpicture}[>=latex']

\draw[fill,gray!40!white] (5,0) rectangle (8,2);

\draw[step=1cm,gray,very thin] (0,0) grid (8,3);

\draw[fill] (2.5,2.5) circle (.05) ;
\draw[very thick] (2,2) rectangle (3,3);
\draw[<->,thin] (1.9,2) -- node[left] {$r_2$} (1.9,2.5);
\draw[<->,thin] (2,1.9) -- node[below] {$r_1$} (2.5,1.9);

\draw[fill] (6.7,1.2) circle (.05);
\node at  (6.7,.8) {$\varphi(\tau,c,u)$};
\draw[very thick] (5.4,.5) rectangle (7.9,1.9);
\draw[<->,thin, shorten <=2pt, shorten >=2pt] (6.7,1.2) -- node[left] {$r_2'$}  (6.7,1.9);
\draw[<->,thin, shorten <=2pt, shorten >=2pt] (6.7,1.2) -- node[above] {$r_1'$}  (7.9,1.2);

\draw[thick] (2.5,2.5) .. controls (4,0)..  (6.7,1.2);

\node at (.75,.75) {$X_2$};
\node at (2.3,2.3) {$c$};
\draw[->,thick, shorten >=1mm] (3.8,2.5) -- node[near start,above] {$x_2$} (3,2.5);

\draw[->,thick, shorten >=1mm] (6.2,2.5) -- node[near start,right] {\colorbox{white}{$F_2(x_2,u)$}} (6.2,2);

\end{tikzpicture}
\caption{Illustration of the transition function of an abstraction.}
\label{f:abs-rhs}
\end{figure}

\begin{theorem}
\label{th:abstraction}
Consider two simple systems $S_1$ and $S_2$ of the form~\ref{e:systemMooreId}
and a set $\bar X_2\subseteq X_2$, and let $\tau>0$.
Suppose that $S_1$ is the sampled system associated
with~\ref{e:System:c-time} and sampling time $\tau$.
Let $\beta$ be a growth bound on $\cup_{x_2\in \bar
X_2}x_2$, $U_2$ associated
with $\tau$ and~\ref{e:System:c-time}.
If $S_2$ is 
an abstraction of $S_1$ based on
$\bar X_2$ and $\beta$, then $S_1 \preccurlyeq_{\in} S_2$.
\end{theorem}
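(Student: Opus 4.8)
The plan is to reduce everything to Proposition~\ref{prop:CanonicalFRR}. By Definition~\ref{d:abstraction}, items~\ref{d:abstraction:cover} and~\ref{d:abstraction:inputs}, the state alphabet $X_2$ is a cover of $X_1$ by non-empty subsets and $U_2 \subseteq U_1$, so it suffices to verify conditions~\ref{prop:CanonicalFRR:in} and~\ref{prop:CanonicalFRR:dyn} of that proposition. Write $K = \bigcup_{x_2 \in \bar X_2} x_2$; by hypothesis $\beta$ is a growth bound on $K$, $U_2$ associated with $\tau$ and~\ref{e:System:c-time}, so in particular $\intcc{0,\tau} \times K \times U_2 \subseteq \dom\varphi$ by Definition~\ref{def:GrowthBound}, item~\ref{def:GrowthBound:bound}. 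A remark used twice: if $u \in U_{S_2}(\Omega)$ then $F_2(\Omega,u) \not= \emptyset$, hence $\Omega \in \bar X_2$ by Definition~\ref{d:abstraction}, item~\ref{d:abstraction:rhs:1}; thus $\Omega$ is compact and convex and $\Omega \subseteq K$.

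For condition~\ref{prop:CanonicalFRR:in}, fix $x \in \Omega \in X_2$ and $u \in U_{S_2}(\Omega)$. By the remark, $x \in K$ and $u \in U_2$, so $\varphi(\tau,x,u)$ is defined; since $0 \in W$, the restriction of $t \mapsto \varphi(t,x,u)$ to $\intcc{0,\tau}$ is a solution of~\ref{e:System:c-time} with input $u$ and initial value $x$, whence $\varphi(\tau,x,u) \in F_1(x,u)$ by Definition~\ref{def:SampledSystem}. Therefore $F_1(x,u) \not= \emptyset$, i.e., $u \in U_{S_1}(x)$.

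For condition~\ref{prop:CanonicalFRR:dyn}, let $\Omega,\Omega' \in X_2$ and $u \in U_{S_2}(\Omega)$ with $\Omega' \cap F_1(\Omega,u) \not= \emptyset$; by the remark $\Omega \in \bar X_2$, say $\Omega = \segcc{a,b}$, and set $c = \tfrac{b+a}{2}$, $r = \tfrac{b-a}{2}$, $r' = \beta(r,u)$. Pick $x_1' \in \Omega' \cap F_1(\Omega,u)$; then $x_1' \in F_1(x_1,u)$ for some $x_1 \in \Omega$, witnessed by a solution $\xi$ of~\ref{e:System:c-time} on $\intcc{0,\tau}$ with input $u$, $\xi(0)=x_1$, $\xi(\tau)=x_1'$. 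Since $x_1, c \in \Omega \subseteq K$, the growth bound inequality~\ref{e:def:GrowthBound} applied with $p=c$ gives, component-wise, $\lvert \xi(\tau) - \varphi(\tau,c,u) \rvert \le \beta(\lvert x_1 - c \rvert, u) \le \beta(r,u) = r'$, where the second inequality uses $\lvert x_1 - c \rvert \le r$ (valid component-wise because $x_1 \in \segcc{a,b}$) together with the monotonicity of $\beta$ from Definition~\ref{def:GrowthBound}, item~\ref{def:GrowthBound:beta}. Hence $x_1' = \xi(\tau) \in \varphi(\tau,c,u) + \segcc{-r',r'}$, and since also $x_1' \in \Omega'$ the antecedent of~\ref{e:abs:tf} holds, so $\Omega' \in F_2(\Omega,u)$ by Definition~\ref{d:abstraction}, item~\ref{d:abstraction:rhs:2}. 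Both conditions of Proposition~\ref{prop:CanonicalFRR} now hold, yielding $S_1 \preccurlyeq_{\in} S_2$.

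I do not expect a genuine obstacle; the argument is bookkeeping around Proposition~\ref{prop:CanonicalFRR}. The two points deserving care are, first, observing that $u \in U_{S_2}(\Omega)$ confines $\Omega$ to the compact cells $\bar X_2$ — which is exactly what makes $\varphi(\tau,\cdot,u)$ well-defined on the cells in play and the transition rule~\ref{e:abs:tf} applicable — and, second, comparing the true trajectory $\xi$ with the flow through the \emph{cell center} $c$ rather than through $x_1$ (which the abstraction cannot observe), a substitution that is legitimate precisely because $\lvert x_1 - c\rvert \le r$ component-wise and $\beta$ is monotone. The passage from the unperturbed flow $\varphi$ to an honest solution of~\ref{e:System:c-time} uses only $0 \in W$.
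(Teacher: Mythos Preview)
Your proof is correct and follows essentially the same approach as the paper: reduce to Proposition~\ref{prop:CanonicalFRR}, use item~\ref{d:abstraction:rhs:1} of Definition~\ref{d:abstraction} to confine attention to compact cells in $\bar X_2$, invoke $\intcc{0,\tau}\times K\times U_2\subseteq\dom\varphi$ for condition~\ref{prop:CanonicalFRR:in}, and compare the perturbed trajectory $\xi$ with the unperturbed flow through the cell center for condition~\ref{prop:CanonicalFRR:dyn}. Your write-up is in fact somewhat more explicit than the paper's own proof, spelling out the role of $0\in W$ and the monotonicity of $\beta$ where the paper leaves these implicit.
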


\begin{proof}
To verify the condition \ref{prop:CanonicalFRR:in} in Proposition
\ref{prop:CanonicalFRR} first note that
$U_{S_2} ( x_2 ) = \emptyset$ if $x_2 \in X_2 \setminus \bar X_2$ by
our assumption on $S_2$. On the other hand, if $x_1 \in x_2 \in \bar
X_2$,
then $U_2 \subseteq U_{S_1}( x_1 )$ by our assumption on $\beta$, so the
condition \ref{prop:CanonicalFRR:in} in Proposition
\ref{prop:CanonicalFRR} is satisfied. To verify the
requirement \ref{prop:CanonicalFRR:dyn} in Proposition
\ref{prop:CanonicalFRR}, assume that
$x_2, x_2' \in X_2$ and $u\in U_{S_2}(x_2)$.
Then $x_2 \in \bar X_2$ by our assumption on $S_2$, so
$x_2 = \segcc{c-r,c+r}$ for some $c$, $r$. Moreover, if
additionally
$x_1 \in x_2$ and
$x_2' \cap F_1(x_1,u) \not= \emptyset$, then by
Definition \ref{def:SampledSystem} there exists a solution
$\xi \colon \intcc{0,\tau} \to \mathbb{R}^n$ of the system
\ref{e:System:c-time} with input $u$ satisfying $\xi(0) = x_1$ and
$\xi(\tau) \in x_2'$. It follows that $| \xi(0) - c | \leq r$, and hence,
$| \xi(\tau) - \varphi(\tau,c,u) | \leq r'$.
Then \ref{e:abs:tf} implies that $x_2' \in F_2(x_2,u)$.
An application of Proposition \ref{prop:CanonicalFRR} completes
the proof.
\end{proof}

As seen from the above proof, the set
$\varphi(\tau,c,u) + \segcc{-r',r'}$ in \ref{e:abs:tf}
over-approximates the attainable set $F_1( \segcc{a,b}, u)$. The
approximation error, which greatly influences the accuracy of the
abstraction, can be reduced by working with smaller cells
$\segcc{a,b}$. However, the accuracy can also
be improved without rediscretizing the state space $X_1$,
by covering cells $\segcc{a,b} \in \bar X_2$ by compact
hyper-intervals $\gamma_i + \segcc{-\rho_i,\rho_i}$ with
$\rho_i < r$, $i \in I$, and then using, in place of the premise in
\ref{e:abs:tf}, the test
$\exists_{i\in I}\left(
\varphi(\tau,\gamma_i,u) + \segcc{-\beta(\rho_i,u),\beta(\rho_i,u)}
\right)
\cap x_2' \not= \emptyset$.

\subsection{A growth bound}
\label{ss:GrowthBound}
In this subsection we present a specific growth bound
for the case that $f$ is continuously
differentiable in its first argument and the perturbations are given by $W=\segcc{-w,w}$ for
some $w\in \mathbb{R}_+^n$.
In the following proposition, we use $D_jf_i$ to denote the partial
derivative with respect to the $j$th
component of the first argument of $f_i$. 

\begin{theorem}
\label{prop:GrowthBound}
Let $\tau>0$ and let $f$, $U$ and $W$ be as in \ref{e:System:c-time}
with $W=\segcc{-w,w}$ for some $w \in \mathbb{R}^n_+$.
Let $U' \subseteq U$ and
assume in addition that $f(\cdot,u)$ is continuously differentiable
for every $u \in U'$.
Furthermore, let $K \subseteq K' \subseteq \mathbb{R}^n$ with
$K'$ being convex,
so that for any $u \in U'$, any $\tau ' \in \intcc{0,\tau}$ and any solution $\xi$ on $\intcc{0,\tau'}$
of \ref{e:System:c-time} with input $u$ and $\xi(0) \in K$,
we have $\xi(t) \in K'$ for all $t \in \intcc{0,\tau'}$.
Lastly, let the parametrized matrix
$L \colon U' \to \mathbb{R}^{n \times n}$ satisfy
\[
L_{i,j}(u)
\geq
\begin{cases}
D_j f_i(x,u),& \text{if $i=j$,}\\
| D_j f_i(x,u) |,& \text{otherwise}
\end{cases}
\]
for all $x \in K'$ and all $u \in U'$. Then
any $\xi$ as above is continuable to $\intcc{0,\tau}$, and
the map $\beta$ given by
\begin{equation*}
\label{e:GrowthBound}
\beta(r,u)
=
\e^{L(u)\tau}
\;r
+
\int_0^\tau
\e^{L(u)s}
\;w
\;\mathrm{d}s
\end{equation*}
is a growth bound on $K$, $U'$ associated with $\tau$ and \ref{e:System:c-time}.
\end{theorem}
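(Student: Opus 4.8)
The plan is to compare any perturbed solution $\xi$ of~\ref{e:System:c-time} with the unperturbed solution $\varphi(\cdot,p,u)$ issuing from a nearby point $p$, to bound the componentwise distance by a linear differential inequality governed by $L(u)$, and then to integrate that inequality.

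First I would fix $u\in U'$, a solution $\xi$ of~\ref{e:System:c-time} on $\intcc{0,\tau'}$ with input $u$ and $\xi(0)\in K$, and a point $p\in K$, and set $\psi:=\varphi(\cdot,p,u)$, $e:=\xi-\psi$ and $z(t):=(|e_1(t)|,\dots,|e_n(t)|)$. Since $0\in W$, the curve $\psi$ is itself a solution of~\ref{e:System:c-time} issuing from $K$, so by hypothesis both $\xi(t)$ and $\psi(t)$ lie in $K'$ wherever defined; by convexity of $K'$ the whole segment joining $\psi(t)$ to $\xi(t)$ stays in $K'$. The fundamental theorem of calculus along that segment gives $\dot e_i(t)=\sum_j A_{ij}(t)\,e_j(t)+d_i(t)$ for a.e.\ $t$, where $A_{ij}(t)=\int_0^1 D_jf_i(\psi(t)+s\,e(t),u)\,\mathrm{d}s$ and $d(t)\in W=\segcc{-w,w}$. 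The hypothesis on $L$ then yields $A_{ii}(t)\le L_{ii}(u)$ and $|A_{ij}(t)|\le L_{ij}(u)$ for $i\ne j$. Differentiating $|e_i|$ a.e.\ (treating separately the points where $e_i(t)=0$: there $z_i$ attains its minimum, so its derivative, when it exists, is $0$, while $(L(u)z(t)+w)_i\ge0$ because $z_i(t)=0$ and the off-diagonal entries of $L(u)$ are nonnegative) leads to the componentwise inequality $\dot z(t)\le L(u)z(t)+w$ for a.e.\ $t$.

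Next I would invoke a comparison principle exploiting that $L(u)$ is a Metzler matrix — its off-diagonal entries dominate $|D_jf_i|\ge0$ — so that $\e^{L(u)t}\ge0$ entrywise for $t\ge0$. Writing $\dot z(t)=L(u)z(t)+w-\mu(t)$ with $\mu\ge0$ a.e.\ and applying variation of constants, one gets $z(t)\le \e^{L(u)t}z(0)+\int_0^t\e^{L(u)(t-s)}w\,\mathrm{d}s$, since $\int_0^t\e^{L(u)(t-s)}\mu(s)\,\mathrm{d}s$ is entrywise nonnegative. Evaluating at $t=\tau$ and substituting $s\mapsto\tau-s$ in the integral gives $|\xi(\tau)-\varphi(\tau,p,u)|\le \e^{L(u)\tau}|\xi(0)-p|+\int_0^\tau\e^{L(u)s}w\,\mathrm{d}s=\beta(|\xi(0)-p|,u)$, which is~\ref{e:def:GrowthBound}; and the monotonicity requirement~\ref{def:GrowthBound:beta} is immediate because $\e^{L(u)\tau}(r-r')\ge0$ whenever $r\ge r'$.

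It remains to settle the existence statements in~\ref{def:GrowthBound:bound}, namely $\intcc{0,\tau}\times K\times U'\subseteq\dom\varphi$ and the continuability of the perturbed solutions. Here I would run the standard continuation argument: extend $\xi$ (respectively $\varphi(\cdot,p,u)$) to its maximal interval $\intco{0,T}$; were $T\le\tau$, the curve would be unbounded as $t\uparrow T$, but applying the estimate of the first two paragraphs to $\xi$ against $\varphi(\cdot,\xi(0),u)$ confines $\xi$ to a fixed distance of that reference solution, and since both remain in $K'$ this contradicts $T\le\tau$, so $T>\tau$. I expect the main obstacle to be making the first step fully rigorous: the a.e.\ differentiability of $t\mapsto|e_i(t)|$, the correct bookkeeping at sign changes and zeros of the $e_i$, and recognizing that it is precisely the \emph{one-sided} (not absolute-value) bound on the diagonal of $L$ that the estimate requires; once the inequality $\dot z\le L(u)z+w$ is established, the remainder is routine linear-ODE manipulation together with the Metzler comparison principle.
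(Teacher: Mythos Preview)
Your argument for the core estimate is correct and in fact more streamlined than the paper's. The paper packages the comparison as a standalone lemma (Lemma~\ref{l:auxforgrowthbound}): given two perturbed solutions and a matrix $L$ satisfying the one-sided condition~\ref{l:auxforgrowthbound:hy:5}, it shows $|\xi_1-\xi_2|\le\rho$ where $\dot\rho=L\rho+w_1+w_2$, via an $\varepsilon$-perturbation and a contradiction on the first crossing time. You instead derive the differential inequality $\dot z\le L(u)z+w$ directly from the integral mean-value representation and then integrate it using variation of constants together with the entrywise nonnegativity of $\e^{L(u)t}$. Both routes hinge on the same Metzler structure of $L(u)$; yours avoids the auxiliary $\varepsilon$-argument, while the paper's lemma is slightly more general (it covers two arbitrary perturbed solutions with different perturbation bounds $w_1,w_2$, which it then reuses).

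There is, however, a circularity in your continuability step. You propose to bound the maximally extended $\xi$ by comparing it with $\varphi(\cdot,\xi(0),u)$, but the existence of $\varphi(\cdot,\xi(0),u)$ on $\intcc{0,\tau}$ is itself part of the conclusion ($\intcc{0,\tau}\times K\times U'\subseteq\dom\varphi$), and your estimate applied to $\varphi$ against itself yields nothing. Moreover, ``both remain in $K'$'' does not by itself contradict unboundedness, since $K'$ is only assumed convex, not bounded. The paper sidesteps this neatly by comparing $\bar\xi$ not with $\varphi$ but with the \emph{constant} function $t\mapsto\xi(0)$: this constant satisfies $|\dot{\xi_2}(t)-f(\xi_2(t),u)|=|f(\xi(0),u)|$, so the comparison lemma applies with $w_2=|f(\xi(0),u)|$ and yields a uniform bound on $|\bar\xi(t)-\xi(0)|$ for $t\in\intco{0,t_0}$, contradicting blow-up. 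Your framework accommodates the same fix: rerun your mean-value and variation-of-constants argument with $\psi\equiv\xi(0)$ and the inhomogeneity $w+|f(\xi(0),u)|$ in place of $w$.
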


Theorem \ref{prop:GrowthBound} can be applied quite easily for obtaining growth bounds. 
Firstly, the computation of an a priori enclosure $K'$ to solutions of
\ref{e:System:c-time}
is standard, e.g. \cite{NedialkovJacksonCorliss99} and the references therein. 
Secondly, the parametrized matrix $L$ requires bounding partial derivatives on $K'$. 
Such bounds can be computed in an automated way using, e.g., interval arithmetic \cite{Moore66}. 
Finally, given $L$, the evaluation of the expression for $\beta$ is straightforward. 
We emphasize, however, that Theorem \ref{prop:GrowthBound} provides
only one of several methods to over-approximate attainable sets. Any
over-approximation method can be used to compute abstractions based on
feedback refinement relations.
\looseness-1

Having a growth bound at hand, the application of Theorem
\ref{th:abstraction} becomes a routine task. Examples are presented in
the next section.

For the proof of Theorem \ref{prop:GrowthBound} we need the following
auxiliary result, which appears in \cite{Walter64ENGLISH} without proof.

\begin{lemma}
\label{l:auxforgrowthbound}
Let $\tau>0$ and
$A \subseteq \mathbb{R}^n$.
Let $\xi_i \colon \intcc{0,\tau} \to A$, $i\in\{1,2\}$, be
two perturbed solutions of a dynamical system
with continuous right hand side $f \colon \mathbb{R}^n \to \mathbb{R}^n$, i.e.,
the maps $\xi_i$ are absolutely continuous and satisfy
\begin{equation*}
|\dot \xi_i(t) - f(\xi_i(t))| \leq w_i(t) \quad \text{for a.e.} \quad t \in \intcc{0,\tau},
\end{equation*}
where $w_i \colon \intcc{0,\tau} \to \mathbb{R}^n_+$, $i \in \{1,2\}$,
are integrable. Consider a matrix $L \in \mathbb{R}^{n\times n}$ with $L_{i,j} \geq 0$ for $i \neq j$ and suppose that for all $x,y\in A$ we have
\begin{equation}
\label{l:auxforgrowthbound:hy:5}
x_i \geq y_i \implies f_i(x) - f_i(y) \leq \sum_{j=1}^n\nolimits L_{i,j}|x_j-y_j|.
\end{equation}
Let $\rho \colon \intcc{0,\tau} \to \mathbb{R}^n_+$ be absolutely continuous and satisfying
\begin{equation*}
\label{l:auxforgrowthbound:hy:4}
\dot \rho (t) = L \rho(t) + w_1(t) +w_2(t)
\end{equation*}
for a.e. $t \in \intcc{0,\tau}$. Then $|\xi_1(0) - \xi_2(0)| \leq \rho(0)$ implies \mbox{$|\xi_1 (t) - \xi_2(t) | \leq \rho(t)$} for every $t \in \intcc{0,\tau}$.
\end{lemma}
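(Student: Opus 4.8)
Lemma~\ref{l:auxforgrowthbound} is a comparison/differential-inequality statement, so the plan is to establish the component-wise bound $|\xi_1(t)-\xi_2(t)|\le\rho(t)$ by a standard argument: introduce $g(t) = |\xi_1(t)-\xi_2(t)|$ (taken component-wise), show that $g$ satisfies, in an appropriate a.e.\ sense, the differential inequality $\dot g(t) \le L g(t) + w_1(t) + w_2(t)$, and then invoke a comparison principle against the solution $\rho$ of the corresponding equation with matching initial condition $g(0)\le\rho(0)$. Because $L$ is required to have nonnegative off-diagonal entries (it is essentially a Metzler matrix), the flow of $\dot\rho = L\rho + (\text{forcing})$ is monotone, which is exactly what makes the comparison go through component-wise.

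The first key step is to bound $\dot g_i$ where $g_i$ is differentiable (which is a.e., since $g_i$ is absolutely continuous as the absolute value of an absolutely continuous function). At such a point, if $\xi_{1,i}(t) \ge \xi_{2,i}(t)$ one has $\dot g_i(t) = \dot\xi_{1,i}(t) - \dot\xi_{2,i}(t)$, and then
\[
\dot g_i(t) \le \bigl(f_i(\xi_1(t)) - f_i(\xi_2(t))\bigr) + w_{1,i}(t) + w_{2,i}(t)
\le \sum_{j=1}^n L_{i,j}\,|\xi_{1,j}(t)-\xi_{2,j}(t)| + w_{1,i}(t)+w_{2,i}(t),
\]
using $|\dot\xi_{k,i}-f_i(\xi_k)|\le w_{k,i}$ and hypothesis~\ref{l:auxforgrowthbound:hy:5}; the symmetric case $\xi_{2,i}(t)\ge\xi_{1,i}(t)$ is handled by swapping the roles of $\xi_1,\xi_2$, and one must check the points where $\xi_{1,i}(t)=\xi_{2,i}(t)$ but $g_i$ is still differentiable (there $\dot g_i(t)$ must be $0$, consistent with the bound since the right-hand side is nonnegative). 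So $\dot g(t) \le L g(t) + w_1(t) + w_2(t)$ component-wise for a.e.\ $t$.

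The second key step is the comparison itself. Set $\delta(t) = \rho(t) - g(t)$; then $\delta(0)\ge 0$ and, for a.e.\ $t$, $\dot\delta(t) \ge L\delta(t)$ (subtracting the inequality from the equation). I would conclude $\delta(t)\ge 0$ on $[0,\tau]$ by a Gronwall-type argument exploiting the Metzler structure of $L$: e.g.\ pick $\lambda>0$ large enough that $L+\lambda I$ has all entries nonnegative, let $\eta(t)=e^{\lambda t}\delta(t)$, so $\dot\eta(t)\ge(L+\lambda I)\eta(t)$ with $\eta(0)\ge0$; then a standard positivity argument (e.g.\ considering the first time any component of $\eta$ hits zero, or integrating against the nonnegative matrix exponential $e^{(L+\lambda I)t}$) yields $\eta(t)\ge0$, hence $\delta(t)\ge0$.

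The main obstacle is the careful handling of differentiability of $|\xi_{1,i}-\xi_{2,i}|$: the absolute value is not differentiable at zeros, so one has to argue either via the a.e.-differentiability of absolutely continuous functions together with a case analysis at sign changes, or via upper Dini derivatives and an absolutely continuous version of the comparison lemma. An alternative that sidesteps the sign issue is to replace $g_i$ by the smooth over-estimate $\sqrt{(\xi_{1,i}-\xi_{2,i})^2+\varepsilon}$ and let $\varepsilon\to0$, but that complicates the use of hypothesis~\ref{l:auxforgrowthbound:hy:5}, which is stated as a one-sided inequality. I expect the cleanest route is to work directly with $g_i$, note it is absolutely continuous, establish the differential inequality a.e., and then cite (or re-derive) the scalar/vector comparison theorem for Metzler $L$; everything else is routine.
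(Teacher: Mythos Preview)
Your plan is correct and follows a standard differential-inequality route; the paper's proof is closely related but organized differently in two respects worth noting.

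First, the paper never forms $g=|\xi_1-\xi_2|$ or worries about differentiability of the absolute value. Instead it works with $z=\xi_1-\xi_2-\tilde\rho$ (no absolute value), lets $t_0$ be the last time $z\le 0$ holds on $[0,t_0]$, and uses the symmetry between $\xi_1$ and $\xi_2$ to assume without loss of generality that the full two-sided bound $|\xi_1-\xi_2|\le\tilde\rho$ already holds on $[0,t_0]$. This sidesteps precisely the sign/zero-set case analysis you identify as the main obstacle. Second, rather than citing a Metzler comparison theorem, the paper bakes the comparison into the argument via an $\varepsilon$-perturbation: it replaces $\rho$ by $\tilde\rho$ solving $\dot{\tilde\rho}=L\tilde\rho+w_1+w_2+\varepsilon$ with $\varepsilon>0$ componentwise, proves the strict bound against $\tilde\rho$ by a first-crossing contradiction (the extra $\varepsilon$ forces $z_i'$ to integrate to something nonpositive past the crossing, contradicting $z_i(t_2)>0$), and then lets $\varepsilon\to0$.

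In effect, your ``cite or re-derive the vector comparison theorem'' is exactly where the paper's $\varepsilon$-trick lives; if you re-derive it you will likely reproduce that argument. Your route is conceptually tidy (one differential inequality, one comparison lemma) but needs the a.e.\ analysis of $|h_i|'$ you sketched; the paper's route is fully self-contained and avoids that analysis at the cost of the perturbation-and-limit step.
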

\begin{proof}
Let $\tilde \rho \colon \intcc{0,\tau}\to \mathbb{R}^n_+$ be absolutely
continuous such that $\tilde \rho(0) = \rho(0)$ and
$\tilde \rho'(t) = L \tilde \rho(t) + w_1(t)+ w_2(t) + \varepsilon$
for some $\varepsilon \in (\mathbb{R}_+ \setminus \{0\})^n$ and
a.e. $t \in \intcc{0,\tau}$. We shall prove that
\begin{equation}
\label{l:auxforgrowthbound:e:2GR}
| \xi_1(t) - \xi_2(t) | \leq \tilde \rho(t)
\end{equation}
holds for all $t \in \intcc{0,\tau}$, so that the lemma follows from a
limit argument. To this end, denote the function
$\xi_1 - \xi_2 - \tilde \rho$ on $\intcc{0,\tau}$ by $z$ and let
$t_0 = \sup\{ t \in \intcc{0,\tau}\ | \ \forall_{s \in \intcc{0,t}}\ z(s) \leq 0\}$.
Then $t_0 \geq 0$ as $|\xi_1(0)-\xi_2(0)|\leq \rho(0)$, and
since we can interchange the roles of $\xi_1$ and $\xi_2$ if necessary, we
may assume without loss of generality that
\ref{l:auxforgrowthbound:e:2GR} holds for all $t \in
\intcc{0,t_0}$. It remains to show that $t_0 = \tau$.

Assume that $t_0 < \tau$. Using \ref{l:auxforgrowthbound:e:2GR}, a
continuity argument shows that
we may choose $t_2 \in \intoc{t_0,\tau}$ and $i \in \intcc{1;n}$ such that $z_i(t_2) > 0$, $z_i(t_0) = 0$ and 
\begin{equation}
\label{l:auxforgrowthbound:e:2}
\varepsilon_i + \sum_{j=1}^n\nolimits L_{i,j} \tilde \rho_j(t) \geq
\sum_{j=1}^n\nolimits L_{i,j} | \xi_{1,j}(t) - \xi_{2,j}(t)|
\end{equation}
for all $t \in \intcc{t_0,t_2}$. 
Define $t_1 = \sup\{t \in \intcc{t_0,t_2} \ | \  z_i(t) \leq 0 \}$ 
and note that $z_i(t_1) = 0$ as $z_i$ is continuous. 
The inequality 
$z_i'(t) \leq f_i(\xi_1(t)) - f_i(\xi_2(t)) + w_{1,i}(t)+w_{2,i}(t) - \tilde \rho'_i(t)$ 
for a.e. $t \in \intcc{t_1,t_2}$ and
the definition of $\tilde \rho$
then imply that
\begin{align*}
\label{e:2}
z_i(t_2) \leq \int_{t_1}^{t_2}\! \big ( f_i(\xi_1(t)) - f_i(\xi_2(t)) - 
\sum_{j=1}^n L_{i,j} \tilde \rho_j(t) - \varepsilon_i  \big ) \mathrm{d}t.
\end{align*}
Thus, $z_i(t_2) \leq 0$ by \ref{l:auxforgrowthbound:e:2} and \ref{l:auxforgrowthbound:hy:5}. 
This contradicts our choice of $t_2$, and so $t_0 = \tau$. 
\end{proof}

\begin{proof}[Proof of Theorem \ref{prop:GrowthBound}]
\label{pageref:review:item12}
Fix $p \in K$, $u \in U'$ and note that $\beta(r,u)\geq \beta(r',u)$ if $r\geq r'$ as all
entries of $\e^{L(u)\tau}$ are non-negative \cite[Th.~7.7]{Kato82}.
Next, we show
that condition \ref{def:GrowthBound:bound} in Definition \ref{def:GrowthBound} holds.
In order to apply Lemma \ref{l:auxforgrowthbound}
we shall establish \ref{l:auxforgrowthbound:hy:5}
for
$K'$, $f(\cdot,u)$ and $L(u)$ in place of $A$, $f$ and $L$.
Indeed, by the mean value theorem,
there exists $z \in \{ x + t(y-x) | t \in \intcc{0,1} \}$
such that $f_i(x,u) -f_i(y,u) = \sum_{j=1}^n D_jf_i(z,u)(x_j-y_j)$.
Hence, by the definition of $L$, we obtain \ref{l:auxforgrowthbound:hy:5}.
Now, let
$\xi$ be a solution on $\intcc{0,\tau}$ of \ref{e:System:c-time}
with input $u$ such that $\xi(0) \in K$. By Filippov's Lemma \cite{Filippov62}, there
exists an integrable map
$s \colon \intcc{0,\tau} \to W$
such
that $\dot \xi(t) = f(\xi(t),u) + s(t)$ for a.e. $t \in \intcc{0,\tau}$.
So, apply Lemma \ref{l:auxforgrowthbound} to $f(\cdot,u)$, $K'$, $\varphi(\cdot,p,u)$, $\xi$, $0$, $w$ and $L(u)$
in place of $f$, $A$, $\xi_1$, $\xi_2$, $w_1$, $w_2$ and $L$,
respectively, to obtain
$|\xi(\tau)-\varphi(\tau,p,u)|\leq \beta(|\xi(0)-p|,u)$.\\
Finally, suppose there exists $\xi \colon \intcc{0,\tau'} \to K'$ as in the statement of the theorem that is not continuable to $\intcc{0,\tau}$.
Then, there exist $t_0 \in \intcc{0,\tau}$ and a solution $\bar \xi \colon \intco{0,t_0} \to \mathbb{R}^n$ of \ref{e:System:c-time} with input $u$ such that
$\bar \xi|_{\intcc{0,\tau'}}=\xi$ and
$\bar \xi(t)$ becomes unbounded as $t \in \intco{0,t_0}$ approaches $t_0$ \cite{Filippov88}.
On the other hand,
applying Lemma \ref{l:auxforgrowthbound} to $f(\cdot,u)$, $K'$, $\bar \xi|_{\intcc{0,t}}$, $\xi(0)$, $w$, $|f(\xi(0),u)|$, $L(u)$ and $t$
in place of $f$, $A$, $\xi_1$, $\xi_2$, $w_1$, $w_2$, $L$ and $\tau$
we conclude that $|\bar \xi(t) - \xi(0)|$ is uniformly bounded for $t \in \intco{0,t_0}$, which is
a contradiction.
\end{proof}

\subsection{The Case of Periodic Dynamics}
\label{ss:s:review:PeriodicDynamics}

Occasionally we will have to consider continuous-time control systems
of the form \ref{e:System:c-time} whose dynamics are periodic, i.e.,
$f(\xi + p,\cdot) = f(\xi,\cdot)$ for some \begriff{period}
$p \in \mathbb{R}^n \setminus \{0\}$ and all $\xi \in \mathbb{R}^n$.
Our result below shows how to exploit periodicity
to obtain abstractions
that are finite and yet are capable of
reproducing solutions that are unbounded in the direction of the
period.
This is useful, e.g. when one of the components of the state
represents an angle and the number of full loops is potentially
unbounded; see Section \ref{ss:Example:ZamaniPolaMazoTabuada10} for an example.

\begin{theorem}
\label{th:periods}
Let $p_1,\ldots,p_\ell \in \mathbb{R}^n$,
$\ell \in \mathbb{N}$,
be such that 
$f$ in \ref{e:System:c-time} satisfies $f(x+p_i,u) = f(x,u)$ for 
all $i \in \intcc{1;\ell}$, $x \in \mathbb{R}^n$ and
$u \in U$.
Consider systems $S_1$ and $S_2$ of the form~\ref{e:systemMooreId}, 
where $U_2 \subseteq U_1$ and
$S_1$ is the sampled system associated with \ref{e:System:c-time} and 
sampling time $\tau > 0$.
Define the map $\pi \colon X_1 \rightrightarrows X_1$ by
$\pi(x)
=
\Menge{ x + \sum_{i=1}^\ell k_i p_i }{ k \in \mathbb{Z}^\ell }
$,
and let $R$ be a set of non-empty subsets of $X_1$ such that
$X_2 = \Menge{ \pi(\Omega) }{ \Omega \in R }$ and $X_2$ is
a cover of $X_1$.\\
Then $S_1 \preccurlyeq_\in S_2$ iff the following conditions hold:
\begin{asparaenum}[(a)]
\item
\label{th:periods:in}
$x \in \Omega \in R$ implies
$U_{S_2}(\pi(\Omega)) \subseteq U_{S_1}(x)$.
\item
\label{th:periods:dyn}
If
$\Omega, \Omega' \in R$,
$u\in U_{S_2}(\pi(\Omega))$ and
$\pi(\Omega') \cap F_1(\Omega,u) \not= \emptyset$,
then $\pi(\Omega') \in F_2(\pi(\Omega),u)$.
\popQED
\end{asparaenum}
\end{theorem}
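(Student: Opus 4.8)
The plan is to reduce the statement to Proposition~\ref{prop:CanonicalFRR} applied to the cover $X_2 = \Menge{\pi(\Omega)}{\Omega \in R}$ of $X_1$. So I would first collect the elementary structural facts about $\pi$ and about the periodicity of the sampled dynamics. Writing $\Lambda = \Menge{\sum_{i=1}^\ell k_i p_i}{k \in \mathbb{Z}^\ell}$, the hypothesis $f(x+p_i,u)=f(x,u)$ propagates to $f(x+v,u)=f(x,u)$ for every $v\in\Lambda$, and hence every translate $\xi+v$ of a solution of~\ref{e:System:c-time} with constant input $u$ is again such a solution; by Definition~\ref{def:SampledSystem} this yields $F_1(x+v,u)=F_1(x,u)+v$ for all $v\in\Lambda$, and taking unions, $F_1(\pi(\Omega),u)=\pi(F_1(\Omega,u))$ for every $\Omega\subseteq X_1$, as well as $U_{S_1}(\pi(x))=U_{S_1}(x)$. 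I would also record that $\pi$ is idempotent on subsets, $\pi(\pi(\Omega))=\pi(\Omega)$, so that every element of $X_2$ is a fixed point of $\pi$, that $\Omega\subseteq\pi(\Omega)$, and that $X_2$ is by assumption a cover of $X_1$ by non-empty sets. This last point together with $U_2\subseteq U_1$ lets me invoke Proposition~\ref{prop:CanonicalFRR}, so $S_1\preccurlyeq_\in S_2$ is equivalent to conditions~\ref{prop:CanonicalFRR:in} and~\ref{prop:CanonicalFRR:dyn} of that proposition, stated for the cover $X_2$.

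It then remains to show that condition~\ref{prop:CanonicalFRR:in} of Proposition~\ref{prop:CanonicalFRR} is equivalent to~\ref{th:periods:in}, and that condition~\ref{prop:CanonicalFRR:dyn} is equivalent to~\ref{th:periods:dyn}. For the first equivalence: if~\ref{prop:CanonicalFRR:in} holds and $x\in\Omega\in R$, then $x\in\pi(\Omega)\in X_2$ gives $U_{S_2}(\pi(\Omega))\subseteq U_{S_1}(x)$, which is~\ref{th:periods:in}; conversely, given $x\in P\in X_2$, write $P=\pi(\Omega)$ with $\Omega\in R$, pick $x_0\in\Omega$ and $v\in\Lambda$ with $x=x_0+v$, and combine~\ref{th:periods:in} with $U_{S_1}(x_0)=U_{S_1}(x_0+v)=U_{S_1}(x)$. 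For the second equivalence: assuming~\ref{prop:CanonicalFRR:dyn}, take $\Omega,\Omega'\in R$, $u\in U_{S_2}(\pi(\Omega))$ with $\pi(\Omega')\cap F_1(\Omega,u)\neq\emptyset$; since $F_1(\Omega,u)\subseteq F_1(\pi(\Omega),u)$, the sets $\pi(\Omega')\in X_2$ and $\pi(\Omega)\in X_2$ satisfy the hypotheses of~\ref{prop:CanonicalFRR:dyn}, whence $\pi(\Omega')\in F_2(\pi(\Omega),u)$, which is~\ref{th:periods:dyn}. For the reverse implication, take $P,P'\in X_2$, $u\in U_{S_2}(P)$ with $P'\cap F_1(P,u)\neq\emptyset$, write $P=\pi(\Omega)$ and $P'=\pi(\Omega')$; then $P'\cap\pi(F_1(\Omega,u))\neq\emptyset$, and a point $z$ in this intersection can be pulled back, $z\in\pi(z')$ for some $z'\in F_1(\Omega,u)$ with $\pi(z')=\pi(z)\subseteq\pi(\Omega')$, so that $z'\in\pi(\Omega')\cap F_1(\Omega,u)$; now~\ref{th:periods:dyn} gives $\pi(\Omega')\in F_2(\pi(\Omega),u)$, i.e.\ $P'\in F_2(P,u)$, which is~\ref{prop:CanonicalFRR:dyn}.

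The only step requiring genuine care is this last one, where I relate the intersection $P'\cap F_1(\pi(\Omega),u)$ to $\pi(\Omega')\cap F_1(\Omega,u)$ by means of the identity $F_1(\pi(\Omega),u)=\pi(F_1(\Omega,u))$ and the idempotence of $\pi$; everything else is routine unwinding of the definitions of a cover and of a feedback refinement relation. I do not expect any real obstacle beyond bookkeeping with the set-valued image notation.
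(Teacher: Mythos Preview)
Your proposal is correct and follows essentially the same route as the paper: reduce to Proposition~\ref{prop:CanonicalFRR} via the translation-equivariance $F_1(x+v,u)=F_1(x,u)+v$ for $v\in\Lambda$, and then match conditions~\ref{prop:CanonicalFRR:in} and~\ref{prop:CanonicalFRR:dyn} with~\ref{th:periods:in} and~\ref{th:periods:dyn}. The paper handles the key converse step for~\ref{th:periods:dyn} by directly shifting the intersection by a single lattice vector, whereas you phrase the same computation through the identity $F_1(\pi(\Omega),u)=\pi(F_1(\Omega,u))$ and the idempotence of $\pi$; these are cosmetic differences only.
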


Obviously, the transition function $F_2$ of the system $S_2$ can be
computed by over-approximating attainable sets $F_1(\Omega,u)$ as
detailed in Sections \ref{ss:Canonical}, \ref{ss:TheAbstraction} and
\ref{ss:GrowthBound}, and by verifying the condition
$(\Omega' + \sum_{i=1}^\ell k_i p_i) \cap F_1(\Omega,u) \not= \emptyset$,
for $\Omega, \Omega' \in R$ with $\Omega$ being compact, and
finitely many $k \in \mathbb{Z}^\ell$.

\begin{proof}
First observe that
$F_1(x,u) + \innerProd{k}{p} = F_1(x + \innerProd{k}{p}, u)$
for all $k \in \mathbb{Z}^\ell$, $x \in X_1$ and $u \in U_1$,
where $\innerProd{k}{p} = \sum_{i=1}^\ell k_i p_i$.
Then $U_{S_1}(x + \innerProd{k}{p}) = U_{S_1}(x)$ for all $x \in X_1$
and all $k \in \mathbb{Z}^\ell$, which shows that the condition
\ref{prop:CanonicalFRR:in} in Proposition \ref{prop:CanonicalFRR} is
equivalent to \ref{th:periods:in}.
We shall show that the condition \ref{prop:CanonicalFRR:dyn} is
equivalent to \ref{th:periods:dyn}, which proves the theorem.
If $\Omega, \Omega' \in R$, $u \in U_{S_2}(\pi(\Omega))$ and
$\pi(\Omega') \cap F_1(\Omega,u) \not= \emptyset$, then
$\pi(\Omega), \pi(\Omega') \in X_2$ and $\Omega \subseteq \pi(\Omega)$,
and so \ref{prop:CanonicalFRR:dyn} shows that
$\pi(\Omega') \in F_2(\pi(\Omega),u)$.
Conversely, if $\Omega, \Omega' \in X_2$,
$u \in U_{S_2}(\Omega)$ and
$\Omega' \cap F_1(\Omega,u) \not= \emptyset$, then there exist
$\Omega_0, \Omega_0' \in R$ satisfying $\Omega = \pi(\Omega_0)$ and
$\Omega' = \pi(\Omega_0')$. Hence,
$\pi(\Omega_0') \cap (\innerProd{k}{p} + F_1(\Omega_0,u)) \not= \emptyset$
for some $k \in \mathbb{Z}^\ell$, and since
$\pi(\Omega_0') = \pi(\Omega_0') + \innerProd{k}{p}$ we have
$\pi(\Omega_0') \cap F_1(\Omega_0,u) \not= \emptyset$.
Then \ref{th:periods:dyn} shows that
$\Omega' \in F_2(\Omega,u)$,
which completes the proof.
\end{proof}

\section{Examples}
\label{s:ex}

In this section, we
demonstrate the practicality of our approach
on
control problems for nonlinear plants.

\subsection{A path planning problem for an autonomous vehicle}
\label{ss:Example:ZamaniPolaMazoTabuada10}

\label{ex:Zamani}
We consider
an autonomous vehicle whose dynamics we assume to be 
given by the bicycle model in \cite[Ch.~2.4]{AstromMurray08}. More 
concretely,
the dynamics of the system
are of the form \ref{e:System:c-time},
where \mbox{$f \colon \mathbb{R}^3 \times U \to \mathbb{R}^3$} is given by 
\begin{equation*}
f(x,(u_1,u_2)) = \begin{pmatrix}
u_1\cos(\alpha + x_3) \cos(\alpha)^{-1} \\
u_1\sin(\alpha + x_3) \cos(\alpha)^{-1} \\
u_1\tan(u_2)
\end{pmatrix}
\end{equation*}
with $U = \intcc{-1,1} \times \intcc{-1,1}$
and $\alpha = \arctan(\tan(u_2)/2)$. 
Here, $(x_1,x_2)$ is the position
and $x_3$ is the orientation of the vehicle in the $2$-dimensional plane.
The control inputs $u_1$ and $u_2$ are the rear wheel velocity 
and the steering angle.
Perturbations are not acting on the system dynamics,
i.e., $W = \{(0,0,0)\}$.

The concrete control problem is formulated with respect to the sampled system
$S_1$  associated with \ref{e:System:c-time} and sampling time $\tau=0.3$. The
control objective is to enforce a certain patrolling behavior on the vehicle which is situated in a maze; see~\ref{fig:zamani}.
Specifically, the vehicle, 
whose initial state is $A_{1,0} = \{(0.4,0.4,0)\}$, 
should patrol infinitely often between the target regions 
$A_{1,\mathrm{r}_1} = \intcc{0,0.5} \times \intcc{0,0.5} \times \mathbb{R}$ and
$A_{1,\mathrm{r}_2} = (9,0,0) + A_{1,\mathrm{r}_1}$,
while avoiding the
obstacles $A_{1,\mathrm{a}}$.
The third 
component of $A_{1,\mathrm{a}}$ equals
$\mathbb{R}$. We formalize our concrete control problem through the pair $(S_1,\Sigma_1)$ with the specification $\Sigma_1$ defined as
\begin{IEEEeqnarray}{c}\label{ex:robot:spec}
\begin{IEEEeqnarraybox}[][c]{l}
\{
(u,x)\in (U_1\times X_1)^{\mathbb{Z}_+}\mid x(0)\in A_{1,0}\implies
\\
\quad
\forall_{t\in \mathbb{Z}_+}
(
x(t) \notin A_{1,\mathrm{a}}
\land
\forall_{i\in \{1,2\}}
\exists_{t' \in \intco{t;\infty}}
x(t') \in A_{1,\mathrm{r}_i}
)
\},
\end{IEEEeqnarraybox}
\IEEEeqnarraynumspace
\end{IEEEeqnarray}
where $U_1=U$ and $X_1 = \mathbb{R}^3$.
To solve $(S_1,\Sigma_1)$ we solve an abstract control problem
$(S_2,\Sigma_2)$ as detailed below.
As $f$ possesses the period $p=(0,0,2\pi)$ we construct
a canonical abstraction $S_2$ of the form \ref{e:systemMooreId}
using Theorem \ref{th:periods}, where
$R$ consist of the
shifted copies of the hyper-interval
\[
\intcc{-\tfrac{1}{10},\tfrac{1}{10}}\times \intcc{-\tfrac{1}{10},\tfrac{1}{10}} \times \intcc{-\tfrac{\pi}{35},\tfrac{\pi}{35}},
\]
whose centers form the set
$
\tfrac{2}{10}\intcc{0;50} \times \tfrac{2}{10}\intcc{0;50} \times \tfrac{2\pi}{35} \intcc{-17;17}
$,
and of the hyper-intervals 
$\{x \in \mathbb{R}^3 \mid x_j \geq 10.1\}$, 
$\{x \in \mathbb{R}^3 \mid x_j \leq -0.1\}$, 
$j \in \{1,2\}$.
Set
$U_2 = \{0,\pm 0.3,\pm 0.6,\pm 0.9\} \times \{0,\pm 0.3,\pm 0.6,\pm 0.9\}$,
and let $X_2$ be as in Theorem \ref{th:periods}. 
The transition function $F_2$ is computed according to the remark
following Theorem \ref{th:periods}, in which
$F_2(x_2,u) = \emptyset$ 
if $(x_2,u) \in X_2 \times U_2$, $x_2 \cap A_{1,\mathrm{a}} \neq \emptyset$. 
The required growth bound $\beta$ on
$\mathbb{R}^3$, $U_2$ associated with $\tau$ and \ref{e:System:c-time}
is obtained using Theorem \ref{prop:GrowthBound}. 
In particular, $\beta(r,u)=\e^{L(u)\tau}r$, 
where $L$ is given by
$L_{1,3}(u_1,u_2) = L_{2,3}(u_1,u_2) = | u_1 \sqrt{\tan^2(u_2)/4 + 1 } |$, 
and $L_{i,j}(u_1,u_2) = 0$ for $(i,j) \notin \{(1,3),(2,3)\}$.

The computation of $F_2$ takes $2.25$ seconds (Intel Core i7 2.9 GHz) 
resulting in an abstraction having $37266181$ transitions.

To construct the abstract specification $\Sigma_2$ we let 
$A_{2,0}=\{x_2\in X_2\mid x_2\cap A_{1,0}\neq \emptyset\}$,
$A_{2,\mathrm{r}_i}=\{x_2\in X_2\mid x_2\subseteq A_{1,\mathrm{r}_i}\}$, $i \in \{1,2\}$ and
$A_{2,\mathrm{a}}=\{x_2\in X_2\mid x_2\cap A_{1,\mathrm{a}}\neq \emptyset\}$;
see \ref{fig:zamani}.
We define $\Sigma_2$
by~\ref{ex:robot:spec}, where we 
substitute $U_1$, $X_1$, $A_{1,0}$, $A_{1,\mathrm{r}_1}$, $A_{1,\mathrm{r}_2}$, $A_{1,\mathrm{a}}$
with $U_2$, $X_2$, $A_{2,0}$, $A_{2,\mathrm{r}_1}$, $A_{2,\mathrm{r}_2}$, $A_{2,\mathrm{a}}$,
respectively. It is straightforward to verify that $\Sigma_2$ is an
abstract specification associated with $S_1$, $S_2$, $\in$ and
$\Sigma_1$.

The abstract problem $(S_2,\Sigma_2)$ can be solved using the algorithm in
\cite[Fig.~1]{BloemJobstmanPitermanPnueliSaar12}, which
simplifies here to two rather than three nested fixed-point iterations
since for our problem the \begriff{general reactivity (1)} specification in
\cite{BloemJobstmanPitermanPnueliSaar12}
reduces to
$\nu Z.\cap_{i \in \{1,2\}} \mu
Y.(\cox Y\cup (A_{2,\mathrm{r}_i}\cap \cox Z))$, where
$\cox A
=
\Menge{x \in X_2}{ \exists_{u \in U_{2}} \emptyset \not= F_2(x,u) \subseteq A }$.
We actually use a Dijkstra-like algorithm
\cite{GalloLongoPallotinoNguyen93} for the inner fixed-point to
successfully solve $(S_2,\Sigma_2)$ within $0.54$ seconds.
The solution is refined to a solution of
$(S_1,\Sigma_1)$ by adding a static quantizer; see Theorem \ref{th:refinement}.
A similar problem with considerably
less complex specification is solved in
\cite{ZamaniPolaMazoTabuada10}, where the run times in seconds are
$13509$ (abstraction) and $535$ (synthesis) on Intel Core 2 Duo 2.4 GHz.

We would like to discuss two of the advantages of the growth bounds
we have introduced in Section \ref{s:Construction}.
As we already mentioned, $\beta$ bounds each component of neighboring solutions separately,
which can be directly seen by the formula
$\beta(r,u) = r + r_3 \! \cdot L_{1,3}(u_1,u_2) \cdot \! (\tau,\tau,0)^{\top}$.
This distinguishes $\beta$ from an estimate based on a norm.
Moreover, $\beta$ depends on the input,
which is crucial for the present example. Indeed,
the function $\e^{(\sup L)\tau}r$, where
$\sup L \in \mathbb{R}^{3 \times 3}$ is given by $(\sup L)_{i,j} =
\sup_{u \in U_2} L_{i,j}(u)$, is also a growth bound on
$\mathbb{R}^3$, $U_2$ associated with $\tau$ and
\ref{e:System:c-time},
which leads to an abstraction with $43288873$ transitions.
However, due to the poor approximation quality of this growth bound we obtain
an unsolvable abstract control problem.
\begin{figure}
\centering
\psfrag{11}[r][]{\footnotesize$0$}
\psfrag{12}[r][]{}
\psfrag{13}[r][]{}
\psfrag{14}[r][]{}
\psfrag{15}[r][]{}
\psfrag{16}[r][]{\footnotesize$5$}
\psfrag{17}[r][]{}
\psfrag{18}[r][]{}
\psfrag{19}[r][]{}
\psfrag{20}[r][]{}
\psfrag{21}[r][]{\footnotesize$10$}
\psfrag{0}[t][]{\footnotesize$0$}
\psfrag{1}[t][]{}
\psfrag{2}[t][]{}
\psfrag{3}[t][]{}
\psfrag{4}[t][]{}
\psfrag{5}[t][]{\footnotesize$5$}
\psfrag{6}[t][]{}
\psfrag{7}[t][]{}
\psfrag{8}[t][]{}
\psfrag{9}[t][]{}
\psfrag{10}[t][]{\footnotesize$10$}
\includegraphics[width=0.95\linewidth]{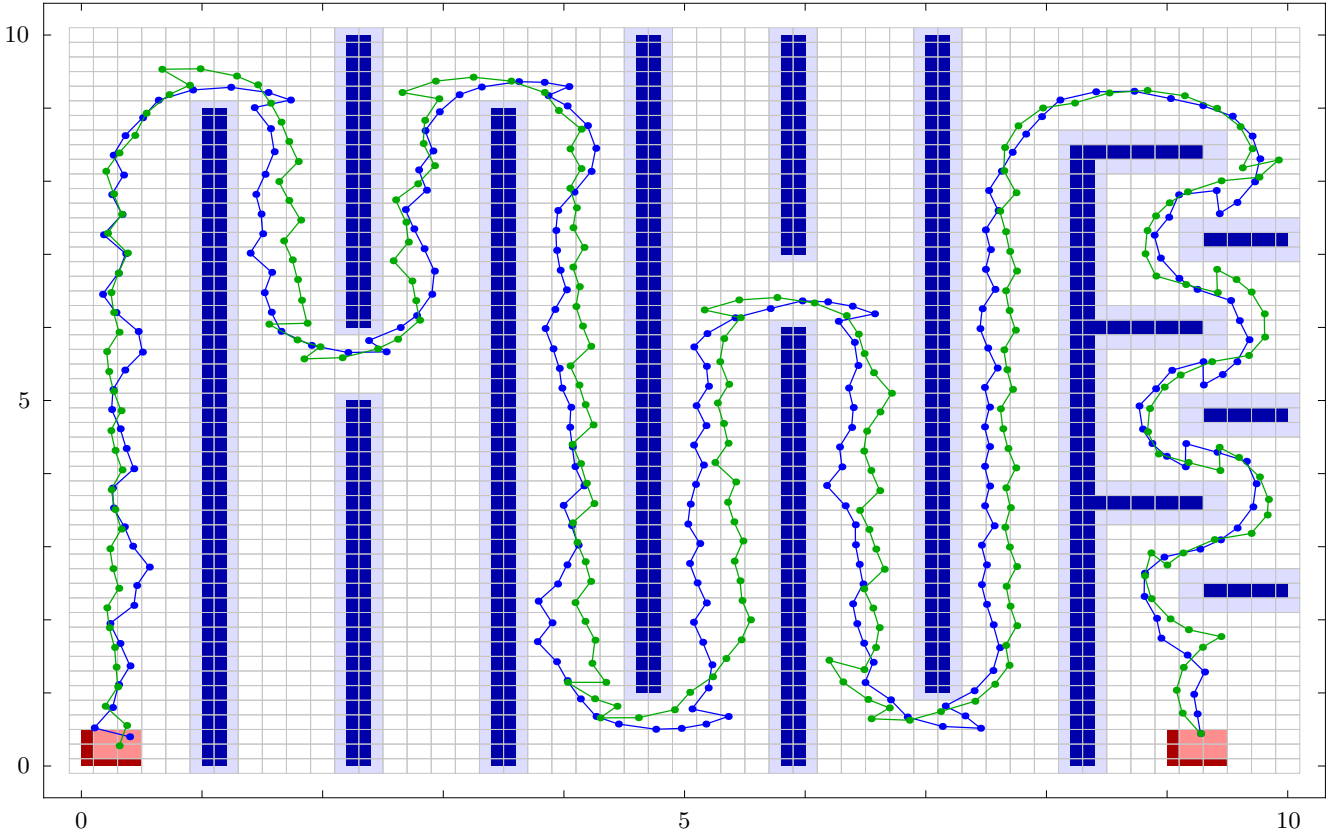}
\caption{\label{fig:zamani} Projection of the states of $S_1$ and
  $S_2$
  to $\mathbb{R}^2 \times \{0\}$. The sets
  $A_{1,\mathrm{a}}$ and $A_{1,\mathrm{r}_1}$, $A_{1,\mathrm{r}_2}$
  are indicated in dark blue and in red, respectively. The states in
  $A_{2,\mathrm{a}}$ and $A_{2,\mathrm{r}_1}$,$A_{2,\mathrm{r}_2}$ are
  indicated in blue and in light red, respectively. 
A closed-loop trajectory of the concrete control problem is shown 
evolving from $A_{1,\mathrm{r}_1}$ to $A_{1,\mathrm{r}_2}$ 
in the blue part and vice versa in the green part.}
\end{figure}

\subsection{An aircraft landing maneuver}
\label{ex:aircraft}

We consider an aircraft DC9-30 whose dynamics we model
according to
\cite{MitchellBayenTomlin01}.
We use $x_1,x_2,x_3$ to denote the
state variables, which respectively correspond to the velocity, the flight path angle and the altitude of the
aircraft. The input alphabet is given by $U =
\intcc{0,160\cdot 10^3} \times \intcc{0^\circ,10^\circ}$ 
and represents the thrust of the engines (in Newton) and the angle of attack. 
The dynamics are given by $f\colon \mathbb{R}^3 \times U \to \mathbb{R}^3$,
\begin{equation*}
f(x,u)= \begin{pmatrix}
\frac{1}{m} ( u_1 \cos u_2 - D(u_2,x_1) - m g \sin x_2) \\ 
\frac{1}{mx_1} ( u_1 \sin u_2 + L(u_2,x_1) - m g \cos x_2 ) \\
x_1 \sin x_2
\end{pmatrix},
\end{equation*}
where $D(u_2,x_1)= (2.7 + 3.08\cdot (1.25 + 4.2\cdot u_2)^2)\cdot x_1^2$, $L(u_2,x_1) = (68.6\cdot (1.25 + 4.2\cdot u_2))\cdot x_1^2$ and $mg = 60 \cdot 10^3 \cdot 9.81$ account for the drag, lift and gravity, respectively \cite{MitchellBayenTomlin01}.

We consider the input disturbance $P_1\colon U \rightrightarrows U$ 
given by 
$P_1(u)= (u + \intcc{-5\cdot 10^3,5 \cdot 10^3} \times \intcc{-0.25^\circ,0.25^\circ})\cap U$ 
and measurement errors of the form 
$P_2 \colon \mathbb{R}^3 \rightrightarrows \mathbb{R}^3$ given by
$P_2(x) = x + \frac{1}{20}\intcc{-0.25,0.25} \times
\frac{1}{20}\intcc{-0.05^\circ,0.05^\circ} \times \frac{1}{20}\intcc{-1,1}$. We do not
consider
any further disturbances,
i.e.,
we let
$W=\{(0,0,0)\}$, $P_3=\id$,
and
$P_4 = \id$. 

The concrete control problem
is formulated
with respect to the sampled system $S_1 = ( X_1, X_1, U_1, U_1, X_1, F_1, \id )$
associated with \ref{e:System:c-time}
and the sampling time $\tau=0.25$.
We aim at steering
the aircraft from an altitude of $55$ meters close to the ground with an appropriate total
and horizontal touchdown velocity.
More formally, the specification $\Sigma_1$ is given by
\begin{IEEEeqnarray}{c}\label{ex:aircraft:spec}
\begin{IEEEeqnarraybox}[][c]{l}
\Sigma_1
=
\big \{(u,x)\in  (U_1 \times X_1)^{\mathbb{Z}_+}   \ | \ x(0) \in A_{0} \ \implies \\
\qquad \qquad \quad (\exists_{s\in \mathbb{Z}_+} \ x(s) \in A_{\mathrm{r}} \ \wedge \ \forall_{t \in \intco{0;s} } \  x(t) \notin A_{\mathrm{a}}  ) \big \},
\end{IEEEeqnarraybox}
\IEEEeqnarraynumspace
\end{IEEEeqnarray}
where $I = \intcc{-3^\circ,0^\circ}$,
$A_{0} = \intcc{80,82} \times \intcc{-2^\circ,-1^\circ} \times \{55\}$,
\begin{align*}
A_{\mathrm{a}} &= \mathbb{R}^3 \setminus (\intcc{58,83} \times I \times \intcc{0,56}), \\
A_{\mathrm{r}} &= (\intcc{63,75} \times I \times \intcc{0,2.5}) \cap
\{x \in \mathbb{R}^3 | x_1 \sin x_2 \geq -0.91 \}.
\end{align*}
As detailed in Section \ref{ss:Uncertainties}, the perturbed control
problem is solved through an auxiliary unperturbed control problem.
To begin with, define the simple system $\hat S_1$
by \ref{e:pert:sys:aux} with $\hat U_1=U$.
\label{review:item11:text1}
Next, let $X$ be a cover
of $\mathbb{R}^3$ formed by subdividing $\mathbb{R}^3 \setminus A_{\mathrm{a}}$
into {$210 \cdot 210 \cdot 210$} hyper-intervals, and suitable unbounded hyper-intervals.
Define $X_2 = \{  P_2^{-1}(\Omega) \ | \ \Omega \in X\}$
and let $\bar
X_2$ be the subset of compact
elements of $X_2$ that do not intersect $A_{\mathrm{a}}$. Define the abstraction for $\hat S_1$
as the simple system $S_2$ given by \ref{e:systemMooreId}, 
where $U_2 = \{0,32000\} \times U'$, $U'$ contains precisely 10 inputs
equally spaced in $\intcc{0^\circ,8^\circ}$. We apply Theorem \ref{prop:GrowthBound}
with $w = M (5000,0.25^\circ)^\top \leq (0.108,0.002,0)^\top$ and a suitable a priori enclosure $K'$ to obtain a growth
bound, where $M \in \mathbb{R}_+^{2\times 3}$ satisfies
$M_{i,j} \geq |D_{j,2}f_i(x,u)|$
for all $x\in K'$ and $u \in P_1(U_2)$. 
Here, $D_{j,2}f_i$ stands for the partial derivative with respect to the $j$th component of the second argument of $f_i$. 
Note that $w$ accounts for the perturbation $P_1$. Then, we use
Theorem \ref{th:abstraction} to compute $F_2$ such that
\mbox{$\hat S_1 \preccurlyeq_{\in} S_2$}.  
The computation takes $674$ seconds resulting in an abstraction 
with about $9.38 \cdot 10^9$ transitions (Intel Xeon E5 3.1 GHz). 

To construct the abstract specification $\Sigma_2$ for $S_2$ we let 
$A_{2,0} = \{x_2 \in X_2 \ | \ x_2 \cap A_{1,0} \neq \emptyset\}$,
$A_{2,\mathrm{a}} = \{x_2 \in X_2 \ | \ x_2 \cap A_{1,\mathrm{a}} \neq \emptyset\}$,
$A_{2,\mathrm{r}} = \{x_2 \in X_2 \ | \ x_2 \subseteq A_{1,\mathrm{r}}\}$ and 
define the specification $\Sigma_2$ by \ref{ex:aircraft:spec} with 
$U_2$, $X_2$, $A_{2,0}$, $A_{2,\mathrm{r}}$, $A_{2,\mathrm{a}}$ in place of 
$U_1$, $X_1$, $A_{0}$, $A_{\mathrm{r}}$, $A_{\mathrm{a}}$. 
It is easy to verify that $\Sigma_2$ is an abstract
specification associated with $\hat S_1$, $S_2$, $\in$ and $\Sigma_1$. Note that 
$\Sigma_2$ (as well as $\Sigma_1$) is a particular instance of a \begriff{reach-avoid} specification.
Using a standard technique \cite{GalloLongoPallotinoNguyen93}, the
abstract control problem $(S_2,\Sigma_2)$
is successfully solved within $26$ seconds.
By Corollary \ref{c:PerturbedRefinement}
the behavior of the perturbed closed loop is a subset of $\Sigma_1$. 
See \ref{fig:aircraft:trajectory}.

We proceed to make some comments on solving perturbed control problems. 
At first, Theorem \ref{prop:GrowthBound} allows to deal with time-varying input perturbations,
when the theorem is applied as in this example. 
Second, accounting for measurement errors only requires inflating the
cells that would have been used if 
measurement errors were not present. To conclude, perturbed control problems 
can be solved in our framework by using canonical abstractions.

\begin{figure}[ht]
\psfrag{470}[r][]{\small 10}
\psfrag{940}[r][]{\small 20}
\psfrag{1410}[r][]{\small 30}
\psfrag{1880}[r][]{\small 40}
\psfrag{2350}[r][]{\small 50}
\psfrag{0}[t][b]{\small 0}
\psfrag{1680}[t][b]{\small 5}
\psfrag{3360}[t][b]{\small 10}
\psfrag{5040}[t][b]{\small 15}
\psfrag{6720}[t][b]{\small 20}
\psfrag{y}[b][t]{\small $x_3$}
\psfrag{x}[l][]{\small $t$}
\begin{center}
\includegraphics[scale=0.47]{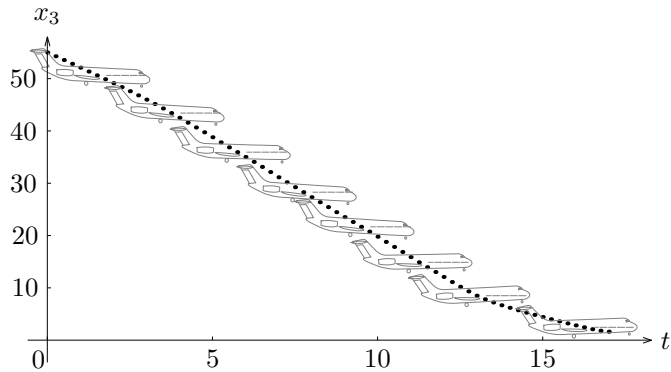}
\end{center}
\caption{\label{fig:aircraft:trajectory}Time evolution of the altitude of the aircraft in the closed loop.
The aircraft pitch $u_2 + x_2$ is indicated for $8$ instants of time.}
\end{figure}

\section{Conclusions}
\label{s:Conclusions}

We have presented a novel approach to abstraction-based controller
synthesis which builds on the concept of feedback refinement relation
introduced in the present paper. Our framework incorporates several
distinct features. Foremost, the designed controllers require quantized
(or symbolic) state information only and are connected to the plant
via a static quantizer, which is particularly important
for any practical implementation of the controller.
Our work permits the synthesis of robust
correct-by-design controllers in the presence of various
uncertainties and disturbances,
and more generally, applies to a broader class of
synthesis problems
than previous research addressing the state information and refinement
complexity issues as explained and illustrated in Sections
\ref{s:intro} and \ref{s:pitfalls}.
Moreover,
we do not assume that the controller is able to set the initial state of
the plant, which is
also important
in the context of practical control systems.

We have additionally identified a class of canonical abstractions, and
have presented a method to compute such abstractions for perturbed
nonlinear control systems.
We utilized numerical examples to
demonstrate the  applicability and efficiency of our
synthesis framework. We emphasize, however, that the computational
effort is still expected to grow rapidly with the dimension of the
state space of the plant, a problem that is shared by all grid based
methods for the computation of abstractions.

\section*{Acknowledgment}

\noindent
The authors thank
M.~Mazo (Delft),
T.~Moor (Erlangen),
P.~Tabuada (Los Angeles)
and M.~Zamani (M\"unchen)
for fruitful discussions about this research.

\bibliographystyle{IEEEtran}
\bibliography{GR/IEEEtranBSTCTL,GR/preambles,GR/mrabbrev,GR/strings,GR/fremde,GR/eigeneCONF,GR/eigeneJOURNALS,GR/eigenePATENT,GR/eigeneREPORTS,GR/eigeneTALKS,GR/eigeneTHESES}

%
\def\ocirc#1{\ifmmode\setbox0=\hbox{$#1$}\dimen0=\ht0 \advance\dimen0
  by1pt\rlap{\hbox to\wd0{\hss\raise\dimen0
  \hbox{\hskip.2em$\scriptscriptstyle\circ$}\hss}}#1\else {\accent"17 #1}\fi}
  \def\cprime{$'$} \ifx\hyperbaseurl\undefined
  \def\href#1#2{#2}\def\url#1{\texttt{#1}} \fi
  \ifx\ExplicitURLsInBibTeX\undefined\relax\else
  \def\href#1#2{www.reiszig.de/gunther/#1}\def\url#1{\texttt{#1}} \fi
\begin{thebibliography}{10}
\providecommand{\url}[1]{#1}
\csname url@samestyle\endcsname
\providecommand{\newblock}{\relax}
\providecommand{\bibinfo}[2]{#2}
\providecommand{\BIBentrySTDinterwordspacing}{\spaceskip=0pt\relax}
\providecommand{\BIBentryALTinterwordstretchfactor}{4}
\providecommand{\BIBentryALTinterwordspacing}{\spaceskip=\fontdimen2\font plus
\BIBentryALTinterwordstretchfactor\fontdimen3\font minus
  \fontdimen4\font\relax}
\providecommand{\BIBforeignlanguage}[2]{{%
\expandafter\ifx\csname l@#1\endcsname\relax
\typeout{** WARNING: IEEEtran.bst: No hyphenation pattern has been}%
\typeout{** loaded for the language `#1'. Using the pattern for}%
\typeout{** the default language instead.}%
\else
\language=\csname l@#1\endcsname
\fi
#2}}
\providecommand{\BIBdecl}{\relax}
\BIBdecl

\bibitem{Palnitkar03}
S.~Palnitkar, \emph{Verilog {HDL}: {A} Guide to Digital Design and Synthesis},
  2nd~ed.\hskip 1em plus 0.5em minus 0.4em\relax Prentice Hall PTR, 2003.

\bibitem{Tabuada09}
P.~Tabuada, \emph{Verification and control of hybrid systems}.\hskip 1em plus
  0.5em minus 0.4em\relax New York: Springer, 2009.

\bibitem{EmersonClarke82}
E.~A. Emerson and E.~M. Clarke, ``Using branching time temporal logic to
  synthesize synchronization skeletons,'' \emph{Sci. Comput. Programming},
  vol.~2, no.~3, pp. 241--266, 1982.

\bibitem{PnueliRosner89}
A.~Pnueli and R.~Rosner, ``On the synthesis of an asynchronous reactive
  module,'' in \emph{Proc. 16th Intl. Coll. Automata, languages and
  programming, Stresa, Italy, Jul. 11-15, 1989}, ser. Lect. Notes Computer
  Science.\hskip 1em plus 0.5em minus 0.4em\relax Springer, Berlin, 1989, vol.
  372, pp. 652--671.

\bibitem{Vardi95}
M.~Y. Vardi, ``An automata-theoretic approach to fair realizability and
  synthesis,'' in \emph{Proc. 7th Intl. Conf. Computer Aided Verification
  (\nobreak{CAV}), Li{\`e}ge, Belgium, Jul. 3-5, 1995}, ser. Lect. Notes
  Computer Science, vol. 939.\hskip 1em plus 0.5em minus 0.4em\relax London,
  UK: Springer-Verlag, 1995, pp. 267--278.

\bibitem{BloemJobstmanPitermanPnueliSaar12}
R.~Bloem, B.~Jobstmann, N.~Piterman, A.~Pnueli, and Y.~Sa'ar, ``Synthesis of
  reactive(1) designs,'' \emph{J. Comput. System Sci.}, vol.~78, no.~3, pp.
  911--938, 2012.

\bibitem{i11abs}
G.~Rei{\ss}ig, ``Computing abstractions of nonlinear systems,'' \emph{IEEE
  Trans. Automat. Control}, vol.~56, no.~11, pp. 2583--2598, Nov. 2011,
  \ifx\RGversion\undefined\href{http://dx.doi.org/10.1109/TAC.2011.2118950}{DOI:10.1109/TAC.2011.2118950},
  \href{http://arxiv.org/abs/0910.2187}{arXiv:0910.2187}\else\href{https://www.researchgate.net/publication/224237722}{RG:224237722}\fi.

\bibitem{GrueneJunge07}
L.~Gr{\"u}ne and O.~Junge, ``Approximately optimal nonlinear stabilization with
  preservation of the {L}yapunov function property,'' in \emph{Proc. 46th IEEE
  Conf. Decision and Control (CDC), New Orleans, LA, U.S.A., 2007}.\hskip 1em
  plus 0.5em minus 0.4em\relax New York: IEEE, 2007, pp. 702--707.

\bibitem{KreisselmeierBirkholzer94}
G.~Kreisselmeier and T.~Birkh{\"o}lzer, ``Numerical nonlinear regulator
  design,'' \emph{IEEE Trans. Automat. Control}, vol.~39, no.~1, pp. 33--46,
  1994.

\bibitem{Girard13}
A.~Girard, ``Low-complexity quantized switching controllers using approximate
  bisimulation,'' \emph{Nonlinear Anal. Hybrid Syst.}, vol.~10, pp. 34--44,
  2013.

\bibitem{YordanovTumovaCernaBarnatBelta12}
B.~Yordanov, J.~Tumova, I.~Cerna, J.~Barnat, and C.~Belta, ``Temporal logic
  control of discrete-time piecewise affine systems,'' \emph{IEEE Trans.
  Automat. Control}, vol.~57, no.~6, pp. 1491--1504, 2012.

\bibitem{DallalColomboDelVecchioLafortune13}
E.~Dallal, A.~Colombo, D.~Del~Vecchio, and S.~Lafortune, ``Supervisory control
  for collision avoidance in vehicular networks with imperfect measurements,''
  in \emph{Proc. 52th IEEE Conf. Decision and Control (CDC), Florence, Italy,
  10-13 Dec. 2013}.\hskip 1em plus 0.5em minus 0.4em\relax New York: IEEE,
  2013, pp. 6298--6303.

\bibitem{LiuOzay14}
J.~Liu and N.~Ozay, ``Abstraction, discretization, and robustness in temporal
  logic control of dynamical systems,'' in \emph{Proc. 17th Intl. Conf. Hybrid
  Systems: Computation and Control (\nobreak{HSCC}), Berlin, Germany, Apr.
  15-17, 2014}, 2014, pp. 293--302.

\bibitem{ZamaniPolaMazoTabuada10}
M.~Zamani, G.~Pola, M.~Mazo, Jr., and P.~Tabuada, ``Symbolic models for
  nonlinear control systems without stability assumptions,'' \emph{IEEE Trans.
  Automat. Control}, vol.~57, no.~7, pp. 1804--1809, 2012.

\bibitem{RunggerStursberg12}
M.~Rungger and O.~Stursberg, ``On-the-fly model abstraction for controller
  synthesis,'' in \emph{American Control Conference (ACC)}, 2012, pp.
  2645--2650.

\bibitem{KoutsoukosAntsaklisStiverLemmon00}
X.~D. Koutsoukos, P.~J. Antsaklis, J.~A. Stiver, and M.~D. Lemmon,
  ``Supervisory control of hybrid systems,'' \emph{Proc. \nobreak{IEEE}},
  vol.~88, no.~7, pp. 1026--1049, Jul. 2000.

\bibitem{i13absocc}
G.~Reissig and M.~Rungger, ``Abstraction-based solution of optimal stopping
  problems under uncertainty,'' in \emph{Proc. IEEE Conf. Decision and Control
  (CDC), Florence, Italy, 10-13 Dec. 2013}.\hskip 1em plus 0.5em minus
  0.4em\relax New York: IEEE, 2013, pp. 3190--3196,
  \ifx\RGversion\undefined\href{http://dx.doi.org/10.1109/CDC.2013.6760370}{DOI:10.1109/CDC.2013.6760370}\else\href{https://www.researchgate.net/publication/257923813}{RG:257923813}\fi.

\bibitem{GirardPappas07b}
A.~Girard and G.~J. Pappas, ``Approximation metrics for discrete and continuous
  systems,'' \emph{IEEE Trans. Automat. Control}, vol.~52, no.~5, pp. 782--798,
  2007.

\bibitem{MoorSchmidtWittmann11}
T.~Moor, K.~Schmidt, and T.~Wittmann, ``Abstraction-based control for not
  necessarily closed behaviours,'' in \emph{Proc. 18th IFAC World Congress,
  Milano, Italy, Aug. 28 - Sep. 2, 2011}.\hskip 1em plus 0.5em minus
  0.4em\relax IFAC, 2011, pp. 6988--6993.

\bibitem{Girard14c}
A.~Girard, ``Approximately bisimilar abstractions of incrementally stable
  finite or infinite dimensional systems,'' in \emph{Proc. IEEE Conf. Decision
  and Control (CDC), Los Angeles, CA, U.S.A., 15-17 Dec. 2014}.\hskip 1em plus
  0.5em minus 0.4em\relax New York: IEEE, 2014, pp. 824--829.

\bibitem{i14symc}
\BIBentryALTinterwordspacing
G.~Reissig and M.~Rungger, ``Feedback refinement relations for symbolic
  controller synthesis,'' in \emph{Proc. IEEE Conf. Decision and Control (CDC),
  Los Angeles, CA, U.S.A., 15-17 Dec. 2014}.\hskip 1em plus 0.5em minus
  0.4em\relax New York: IEEE, 2014, pp. 88--94. [Online]. Available:
  \url{http://dx.doi.org/10.1109/CDC.2014.7039364}
\BIBentrySTDinterwordspacing

\bibitem{BenvenutiFerrariMazziSangiovanniVincentelli08}
L.~Benvenuti, A.~Ferrari, E.~Mazzi, and A.~{Sangiovanni-Vincentelli},
  ``\BIBforeignlanguage{English}{Contract-based design for computation and
  verification of a closed-loop hybrid system},'' in
  \emph{\BIBforeignlanguage{English}{Proc. 11th Intl. Conf. Hybrid Systems:
  Computation and Control (\nobreak{HSCC}), St. Louis, USA, Apr. 22-24, 2008}},
  ser. Lect. Notes Computer Science, vol. 4981.\hskip 1em plus 0.5em minus
  0.4em\relax Springer, 2008, pp. 58--71.

\bibitem{TripakisLicklyHenzingerLee11}
S.~Tripakis, B.~Lickly, T.~A. Henzinger, and E.~A. Lee, ``A theory of
  synchronous relational interfaces,'' \emph{ACM Trans. Program. Lang. Syst.},
  vol.~33, no.~4, p.~14, 2011.

\bibitem{BaierKatoen08}
C.~Baier and J.-P. Katoen, \emph{Principles of model checking}.\hskip 1em plus
  0.5em minus 0.4em\relax MIT Press, Cambridge, MA, 2008.

\bibitem{Sontag98}
E.~D. Sontag, \emph{Mathematical control theory}, 2nd~ed., ser. Texts in
  Applied Mathematics.\hskip 1em plus 0.5em minus 0.4em\relax New York:
  Springer-Verlag, 1998, vol.~6.

\bibitem{CainesWei98}
P.~E. Caines and Y.-J. Wei, ``Hierarchical hybrid control systems: a
  lattice-theoretic formulation,'' \emph{IEEE Trans. Automat. Control},
  vol.~43, no.~4, pp. 501--508, 1998.

\bibitem{FainekosKressGazitPappas05}
G.~E. Fainekos, H.~Kress-Gazit, and G.~J. Pappas, ``Temporal logic motion
  planning for mobile robots,'' in \emph{Proc. IEEE Intl. Conf. Robotics and
  Automation (ICRA), Barcelona, Spain, 18-22 Apr. 2005}, 2005, pp. 2032--2037.

\bibitem{HabetsCollinsvanSchuppen06}
L.~C. G. J.~M. Habets, P.~J. Collins, and J.~H. van Schuppen, ``Reachability
  and control synthesis for piecewise-affine hybrid systems on simplices,''
  \emph{IEEE Trans. Automat. Control}, vol.~51, no.~6, pp. 938--948, 2006.

\bibitem{HabetsCollinsvanSchuppen12}
L.~C. G. J.~M. Habets, P.~J. Collins, and J.~H. van Schuppen, ``Control to
  facet by piecewise-affine output feedback,'' \emph{IEEE Trans. Automat.
  Control}, vol.~57, no.~11, pp. 2831--2843, 2012.

\bibitem{KressGazitWongpiromsarnTopcu11}
H.~Kress-Gazit, T.~Wongpiromsarn, and U.~Topcu, ``Correct, reactive, high-level
  robot control,'' \emph{IEEE Robotics and Automation Magazine}, vol.~18,
  no.~3, pp. 65--74, 2011.

\bibitem{HelwaBroucke13}
M.~K. Helwa and M.~E. Broucke, ``Monotonic reach control on polytopes,''
  \emph{IEEE Trans. Automat. Control}, vol.~58, no.~10, pp. 2704--2709, 2013.

\bibitem{HelwaCaines15}
M.~K. Helwa and P.~E. Caines, ``On the construction of in-block controllable
  covers of nonlinear systems on polytopes,'' in \emph{Proc. IEEE Conf.
  Decision and Control (CDC), Osaka, Japan, 15-18 Dec. 2015}.\hskip 1em plus
  0.5em minus 0.4em\relax New York: IEEE, 2015, pp. 276--281.

\bibitem{RockafellarWets09}
R.~T. Rockafellar and R.~J.-B. Wets, \emph{Variational analysis}, ser.
  Grundlehren der Mathematischen Wissenschaften.\hskip 1em plus 0.5em minus
  0.4em\relax Berlin: Springer-Verlag, 1998, vol. 317, 3rd corr printing 2009.

\bibitem{Vidyasagar81}
M.~Vidyasagar, \emph{Input-output analysis of large-scale interconnected
  systems}, ser. Lect. Notes Control Inform. Sciences.\hskip 1em plus 0.5em
  minus 0.4em\relax Berlin: Springer-Verlag, 1981, vol.~29.

\bibitem{vanGlabbeek01}
R.~J. van Glabbeek, ``The linear time--branching time spectrum. {I}. {T}he
  semantics of concrete, sequential processes,'' in \emph{Handbook of process
  algebra}, J.~A. Bergstra, A.~Ponse, and S.~A. Smolka, Eds.\hskip 1em plus
  0.5em minus 0.4em\relax Amsterdam: North-Holland, 2001, pp. 3--99.

\bibitem{FainekosGirardKressGazitPappas09}
G.~E. Fainekos, A.~Girard, H.~Kress-Gazit, and G.~J. Pappas, ``Temporal logic
  motion planning for dynamic robots,'' \emph{Automatica J. IFAC}, vol.~45,
  no.~2, pp. 343--352, 2009.

\bibitem{Hartman02}
P.~Hartman, \emph{Ordinary differential equations}, ser. Classics in Applied
  Mathematics.\hskip 1em plus 0.5em minus 0.4em\relax Philadelphia, PA, U.S.A.:
  Society for Industrial and Applied Mathematics (SIAM), 2002, vol.~38.

\bibitem{Junge99}
O.~Junge, ``Rigorous discretization of subdivision techniques,'' in
  \emph{International Conference on Differential Equations, Vol. 1, 2 (Berlin,
  1999)}.\hskip 1em plus 0.5em minus 0.4em\relax World Sci. Publ., River Edge,
  NJ, 2000, pp. 916--918.

\bibitem{Osipenko07}
G.~Osipenko, \emph{Dynamical systems, graphs, and algorithms}, ser. Lecture
  Notes in Mathematics.\hskip 1em plus 0.5em minus 0.4em\relax Berlin:
  Springer-Verlag, 2007, vol. 1889.

\bibitem{i13qsupp}
\BIBentryALTinterwordspacing
A.~Weber and G.~Reissig, ``Classical and strong convexity of sublevel sets and
  application to attainable sets of nonlinear systems,'' \emph{SIAM J. Control
  Optim.}, vol.~52, no.~5, pp. 2857--2876, 2014,
  \ifx\RGversion\undefined\href{http://dx.doi.org/10.1137/130945983}{DOI:10.1137/130945983},
  \href{http://arxiv.org/abs/1311.4989}{arXiv:1311.4989}\else\href{https://www.researchgate.net/publication/258661045}{RG:258661045}\fi.
  [Online]. Available: \url{http://dx.doi.org/10.1137/130945983}
\BIBentrySTDinterwordspacing

\bibitem{NedialkovJacksonCorliss99}
N.~S. Nedialkov, K.~R. Jackson, and G.~F. Corliss, ``Validated solutions of
  initial value problems for ordinary differential equations,'' \emph{Appl.
  Math. Comput.}, vol. 105, no.~1, pp. 21--68, 1999.

\bibitem{Moore66}
R.~E. Moore, \emph{Interval analysis}.\hskip 1em plus 0.5em minus 0.4em\relax
  Englewood Cliffs, N.J.: Prentice-Hall Inc., 1966.

\bibitem{Walter64ENGLISH}
W.~Walter, \emph{Differential and integral inequalities}, ser. Translated from
  the German by Lisa Rosenblatt and Lawrence Shampine. Ergebnisse der
  Mathematik und ihrer Grenzgebiete, Band 55.\hskip 1em plus 0.5em minus
  0.4em\relax New York: Springer-Verlag, 1970.

\bibitem{Kato82}
T.~Kato, \emph{A short introduction to perturbation theory for linear
  operators}.\hskip 1em plus 0.5em minus 0.4em\relax New York: Springer-Verlag,
  1982.

\bibitem{Filippov62}
A.~F. Filippov, ``On certain questions in the theory of optimal control,''
  \emph{J. SIAM Control Ser. A}, vol.~1, pp. 76--84, 1962.

\bibitem{Filippov88}
A.~F. Filippov, \emph{Differential equations with discontinuous righthand
  sides}, ser. Mathematics and its Applications (Soviet Series).\hskip 1em plus
  0.5em minus 0.4em\relax Kluwer Academic Publishers Group, Dordrecht, 1988,
  vol.~18, translated from the Russian.

\bibitem{AstromMurray08}
K.~J. {\AA}str{\"o}m and R.~M. Murray, \emph{Feedback systems}.\hskip 1em plus
  0.5em minus 0.4em\relax Princeton University Press, Princeton, NJ, 2008.

\bibitem{GalloLongoPallotinoNguyen93}
G.~Gallo, G.~Longo, S.~Pallottino, and S.~Nguyen, ``Directed hypergraphs and
  applications,'' \emph{Discrete Appl. Math.}, vol.~42, no. 2-3, pp. 177--201,
  1993, combinatorial structures and algorithms.

\bibitem{MitchellBayenTomlin01}
I.~Mitchell, A.~Bayen, and C.~Tomlin, ``Validating a hamilton-jacobi
  approximation to hybrid system reachable sets,'' in \emph{Proc. 4th Intl.
  Conf. Hybrid Systems: Computation and Control (\nobreak{HSCC}), Rome, Italy,
  Mar. 28-30, 2001}, ser. Lect. Notes Computer Science, M.~Di~Benedetto and
  A.~Sangiovanni-Vincentelli, Eds., vol. 2034.\hskip 1em plus 0.5em minus
  0.4em\relax Springer, 2001, pp. 418--432.

\end{thebibliography}
\end{document}